\def\w*lim{\mathop{\mbox{\textup{w*-lim}}}}
\newtheorem{theorem}{\sc \textbf{Theorem}}[section]
\newtheorem{lemma}[theorem]{\sc \textbf{Lemma}}
\newtheorem{cor}[theorem]{\sc \textbf{Corollary}}
\newtheorem{prop}[theorem]{\sc \textbf{Proposition}}
\newtheorem{proposition}[theorem]{\sc \textbf{Proposition}}
\newtheorem{rmk}[theorem]{\sc \textbf{Remark}}
\newtheorem{definition}[theorem]{\sc \textbf{Definition}}
\newtheorem{lem}[theorem]{\sc \textbf{Lemma}}
\newcommand{\cA}{{\mathcal A}}
\newcommand{\cB}{{\mathcal B}}
\newcommand{\cE}{{\mathcal E}}
\newcommand{\cH}{{\mathcal H}}
\newcommand{\cM}{{\mathcal M}}
\newcommand{\cN}{{\mathcal N}}
\newcommand{\cP}{{\mathcal P}}
\newcommand{\cU}{{\mathcal U}}
\newcommand{\cX}{{\mathcal X}}
\newcommand{\cY}{{\mathcal Y}}
\newcommand{\bR}{{\mathbb R}}
\newtheorem{example}[theorem]{Example}
\begin{document}

\title[Alberti--Uhlmann problem]{Alberti--Uhlmann problem on  Hardy--Littlewood--P\'{o}lya majorization
}

\dedicatory{Dedication to the 90th birthday of Professor Armin Uhlmann}

\author[J. Huang]{J. Huang}
\address[Jinghao Huang]{School of Mathematics and Statistics, University of New South Wales, Kensington, 2052, NSW, Australia,
\emph{E-mail~:}{\tt jinghao.huang@unsw.edu.au}}

\author[F. Sukochev]{F. Sukochev}
\address[Fedor A. Sukochev]{School of Mathematics and Statistics, University of New South Wales, Kensington, 2052, NSW, Australia  \emph{E-mail~:} {\tt f.sukochev@unsw.edu.au}
}

\subjclass[2010]{46L51; 46L10; 46E30; 15B51. \hfill Version~: \today}

\keywords{spectral scales; doubly stochastic operators; extreme points;  majorization;  noncommutative $L_1$-space; semifinite von Neumann algebras.
}


\thanks{Fedor Sukochev was supported by the Australian Research Council  (FL170100052).}

\begin{abstract}
 We fully describe the doubly stochastic orbit of a self-adjoint  element in the noncommutative $L_1$-space affiliated with a semifinite von Neumann algebra, which answers a problem posed by Alberti and Uhlmann \cite{AU} in the 1980s,
 extending  several results in the literature.
 It follows further from our methods that, for any $\sigma$-finite  von Neumann algebra $\cM$ equipped a semifinite infinite faithful normal trace $\tau$,  there exists a self-adjoint operator $y\in L_1(\cM,\tau)$ such that the doubly stochastic orbit of $y$ does not  coincide with the orbit of $y$ in the sense of Hardy--Littlewood--P\'{o}lya,
 which confirms  a conjecture by Hiai \cite{Hiai}.
 However, we show that Hiai's conjecture fails for non-$\sigma$-finite von Neumann algebras.
The main  result of the present paper also answers
 the (noncommutative) infinite counterparts of
 problems due to  Luxemburg \cite{Luxemburg} and Ryff \cite{Ryff63} in the 1960s.
\end{abstract}
\maketitle

\tableofcontents

\section{Introduction}

The partial order $\prec$  for real $n$-vectors  (customarily termed  Hardy--Littlewood--P\'{o}lya majorization) was introduced in the early 20th century, by
Muirhead \cite{Muirhead}, Lorenz\cite{Lorenz}, Dalton\cite{Dalton} and Schur\cite{Schur}
(see also a fundamental monograph by Hardy, Littlewood and P\'{o}lya\cite{HLP}).
Hardy--Littlewood--P\'{o}lya majorization
plays a vital role in the study of function spaces, Banach
lattices and interpolation theory and  has important applications in stochastic
analysis, numerical analysis, geometric inequalities, matrix theory, statistical theory, optimization  and economic theory~\cite{MOA}.

The starting point of this paper is a well-known result in classic analysis due to  Hardy, Littlewood and P\'{o}lya:
$ x\prec y\in \bR^n$ if and only if $x$ belongs to the convex hull $\Omega(y)$ of the set of all permutations of $y$ \cite{HLP} (see also \cite[p.10]{MOA} and \cite{Rado}), i.e., the convex hull of $\{ Py : P \mbox{ is  a permutation matrix}\}$.
Alternatively, one can say that  $\{ Py : P \mbox{ is  a permutation matrix}\}$ is the set of all extreme points of  $\{x\in \bR^n: x\prec y\}$.
This result has an important noncommutative counterpart, which states that,  in the setting of $n\times n$ matrices,
 a Hermitian   matrix $A$ belongs to  the doubly stochastic orbit of another Hermitian  matrix $B$  ($A$ is said to be more chaotic than $B$ for physical interpretation \cite{AU}) if and only if the vector  $\lambda (A)$ of eigenvalues of  $A$ is majorized by that of $B$ in the sense of Hardy--Littlewood--P\'{o}lya  (denoted by $A\prec B$) \cite{Rado}.
 This, in turn,   is closely related to the Birkhoff--von Neumann theorem
 identifying extreme points of doubly stochastic matrices with permutation matrices\cite{Birkhoff46}.
The (Hardy--Littlewood--P\'{o}lya) orbit $\Omega (B):= \{A \mbox{ is a Hermitian $n\times n$ matrix}:A\prec B\}$
can be described in terms of
 unitary mixing and convex functions.
We record  these well-known  results  as follows. 
\begin{theorem}
\cite[Theorem 2.2]{AU}\cite{Ando}\label{AU}
Let $A,B$ be Hermitian $n\times n$ matrices.
The following conditions are equivalent:
\begin{enumerate}
  \item[(a)] $A$ is in the doubly stochastic orbit of $B$, that is, $A=TB$ for some doubly stochastic operator $T$ (i.e., a positive linear map which preserves the trace and the identity);
  \item[(b)] $A$ is majorized by $B$, that is $A\prec B$;
  \item[(c)]  $A$ is in the convex hull of elements $C$ which are unitarily equivalent to $B$, i.e.,
      $C$ and $B$ have the same eigenvalues;
  \item[(d)]$A$ is in the convex hull of elements which are unitarily equivalent to $B$ and commute with $A$;
  \item[(e)] $\sum_{k=1}^n ( \lambda (A)-t)_+  \le \sum_{k=1}^n (\lambda(B)-t)_+  $ for any $t\in \bR$ and ${\rm Tr}(A)= {\rm Tr}(B)$;
  \item[(f)] for any convex function $f$ on the real axis, we have ${\rm Tr}(f(A))\le {\rm Tr}(f(B))$\footnote{In \cite[Theorem 2.2]{AU}, the authors use any concave function on the real axis, ${\rm Tr}(f(A))\ge {\rm Tr}(f(B))$''. Actually, it is equivalent with condition ($f$) by  taking $-f$.}.
\end{enumerate}
Hence, ${\rm Tr}$ stands for the standard trace of a matrix.
\end{theorem}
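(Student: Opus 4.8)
The plan is to prove the chain $(c)\Rightarrow(a)\Rightarrow(b)\Rightarrow(d)\Rightarrow(c)$ together with the side cycle $(c)\Rightarrow(f)\Rightarrow(e)\Rightarrow(b)$; the first chain makes $(a),(b),(c),(d)$ mutually equivalent and the side cycle traps $(e),(f)$ between $(c)$ and $(b)$, so all six conditions become equivalent. Throughout I write $\lambda_1(A)\ge\cdots\ge\lambda_n(A)$ for the eigenvalues of $A$ in non-increasing order, similarly for $B$, and I use two classical inputs recalled in the Introduction: the Hardy--Littlewood--P\'olya--Rado fact that $x\prec y$ in $\bR^n$ iff $x$ lies in the convex hull of the vectors $Py$ over permutation matrices $P$, and the Birkhoff--von Neumann theorem that the doubly stochastic matrices are exactly the convex hull of the permutation matrices. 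The implication $(d)\Rightarrow(c)$ is immediate, and $(c)\Rightarrow(a)$ is nearly so: if $A=\sum_k\mu_kU_kBU_k^{*}$ with $\mu_k\ge0$, $\sum_k\mu_k=1$, $U_k$ unitary, then $T:=\sum_k\mu_k\,U_k(\cdot)U_k^{*}$ is positive, satisfies $T(1)=1$ and $\mathrm{Tr}\circ T=\mathrm{Tr}$, and $TB=A$.

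For $(a)\Rightarrow(b)$, fix orthonormal eigenbases $(e_i)_i$ of $A$ and $(f_j)_j$ of $B$ and put $D_{ij}:=\langle e_i,T(|f_j\rangle\langle f_j|)e_i\rangle$. Positivity of $T$ gives $D_{ij}\ge0$; $T(1)=1$ gives $\sum_jD_{ij}=\langle e_i,e_i\rangle=1$; and $\mathrm{Tr}\circ T=\mathrm{Tr}$ gives $\sum_iD_{ij}=\mathrm{Tr}(|f_j\rangle\langle f_j|)=1$, so $D$ is doubly stochastic. Expanding $A=TB$ in the basis $(e_i)$ yields $\lambda_i(A)=\sum_jD_{ij}\lambda_j(B)$, i.e.\ $\lambda(A)=D\lambda(B)$; by Birkhoff--von Neumann $D$ is a convex combination of permutation matrices, hence $\lambda(A)$ lies in the convex hull of the permutations of $\lambda(B)$, which is precisely $\lambda(A)\prec\lambda(B)$. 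For $(b)\Rightarrow(d)$, write $\lambda(A)=\sum_k\mu_kP_k\lambda(B)$ with $P_k$ permutation matrices and $(\mu_k)$ a probability vector, and in an orthonormal eigenbasis $(e_i)_i$ of $A$ set $C_k:=\sum_i(P_k\lambda(B))_i\,|e_i\rangle\langle e_i|$: each $C_k$ is diagonal in the eigenbasis of $A$ (so commutes with $A$), has spectrum equal to a permutation of that of $B$ (so is unitarily equivalent to $B$), and $\sum_k\mu_kC_k=\sum_i\big(\sum_k\mu_k(P_k\lambda(B))_i\big)|e_i\rangle\langle e_i|=\sum_i\lambda_i(A)|e_i\rangle\langle e_i|=A$, giving $(d)$.

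For the side cycle: $(c)\Rightarrow(f)$ uses the classical fact (a consequence of the Peierls--Klein inequality) that $X\mapsto\mathrm{Tr}(f(X))$ is convex on the Hermitian matrices for convex $f$, which together with unitary invariance of $\mathrm{Tr}\circ f$ gives $\mathrm{Tr}(f(A))\le\sum_k\mu_k\mathrm{Tr}(f(U_kBU_k^{*}))=\mathrm{Tr}(f(B))$. For $(f)\Rightarrow(e)$, apply $(f)$ to $f=\pm\mathrm{id}$ to get $\mathrm{Tr}(A)=\mathrm{Tr}(B)$ and to the convex function $f(s)=(s-t)_+$ to get $\sum_{k=1}^n(\lambda_k(A)-t)_+=\mathrm{Tr}((A-t)_+)\le\mathrm{Tr}((B-t)_+)=\sum_{k=1}^n(\lambda_k(B)-t)_+$. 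For $(e)\Rightarrow(b)$, evaluate this at $t=\lambda_m(B)$: the right side equals $\sum_{i=1}^m\lambda_i(B)-m\lambda_m(B)$ while the left side is at least $\sum_{i=1}^m(\lambda_i(A)-\lambda_m(B))$, so cancelling $m\lambda_m(B)$ yields $\sum_{i=1}^m\lambda_i(A)\le\sum_{i=1}^m\lambda_i(B)$ for every $m$; combined with the case $m=n$ (the trace identity) this is exactly $\lambda(A)\prec\lambda(B)$.

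I do not anticipate a genuine obstacle: the whole statement is carried by the Birkhoff--von Neumann theorem and the elementary majorization calculus. The one step calling for care is $(a)\Rightarrow(b)$, where one must check that the transition array $D$ extracted from the operator-level doubly stochastic map $T$ is itself doubly stochastic in the matrix sense — this is exactly where positivity, unitality, and trace preservation of $T$ are each used once — after which Birkhoff--von Neumann converts the classical majorization statement into the geometric one $(c)$, and the remaining implications amount to bookkeeping with convex functions and permutation matrices.
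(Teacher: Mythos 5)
Your proof is correct. Note, however, that the paper itself gives no proof of this theorem: it is recorded as a known classical result with citations to Alberti--Uhlmann and to Ando, so there is no argument in the paper to compare against. Your write-up is the standard self-contained proof: the cycle $(c)\Rightarrow(a)\Rightarrow(b)\Rightarrow(d)\Rightarrow(c)$ driven by the Birkhoff--von Neumann theorem and the Hardy--Littlewood--P\'olya--Rado characterisation of $\prec$ (both of which the paper does recall in its introduction), together with the side cycle $(c)\Rightarrow(f)\Rightarrow(e)\Rightarrow(b)$ using the convexity of $X\mapsto \mathrm{Tr}(f(X))$ and the test functions $f(s)=\pm s$ and $f(s)=(s-t)_+$. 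Each step checks out; in particular the key computation in $(a)\Rightarrow(b)$ --- that the array $D_{ij}=\langle e_i, T(|f_j\rangle\langle f_j|)e_i\rangle$ is doubly stochastic and satisfies $\lambda(A)=D\lambda(B)$ --- is exactly right, and the evaluation of $(e)$ at $t=\lambda_m(B)$ correctly recovers the partial-sum inequalities $\sum_{i=1}^m\lambda_i(A)\le\sum_{i=1}^m\lambda_i(B)$. The only external ingredient you invoke beyond what the paper states is the trace convexity (Peierls--Klein) inequality in $(c)\Rightarrow(f)$, which is standard and appropriately flagged.
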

Alberti and Uhlmann developed a unitary mixing theory which is motivated by physical problems related to  irreversible processes, the classification of mixed states (in the sense of Gibbs and von Neumann),  and general diffusions\cite[p.8]{AU}.
One of the central problems considered by
Alberti and Uhlmann  is
 as follows \cite[Chapter 3, p.58]{AU}:
\begin{quote}
``We,\dots, may ask ourselves how to formulate a variant of
 Theorem~\ref{AU}  in the setting of von Neumann algebras?''
\end{quote}

This paper addresses a number of seemingly disparate open problems, and we shall see that they are in fact deeply intertwined.
These questions are loosely concentrated around Alberti--Uhlmann attempts \cite{AU} to extend majorization theory for general von Neumann algebras and Hiai--Nakamura attempts \cite{Hiai,HN2,Hiai92} to extend the Hardy--Littlewood--P\'{o}lya majorization to the setting of strictly  infinite semifinite von Neumann algebras.
We explain below the connection of these two themes with classical themes in analysis and algebra.

The notion of majorization in the setting of Lebesgue measurable functions on $(0,1)$ is due to Hardy, Littlewood and P\'{o}lya \cite{HLP29} (see also \cite{Ryff63,Luxemburg}).
As we mention above,
$ x\prec y\in \bR^n$ if and only if $x$ belongs to the convex hull $\Omega(y)$ of the set of all permutations of $y$ \cite{HLP,MOA,Rado}.
In 1967, Luxemburg (see \cite[Problem 1]{Luxemburg}) asked for a continuous counterpart of this result:
\emph{how to describe   extreme points of the set $\Omega(f)$ of all  elements majorized by an integrable function $f$ on a finite measure space?}
The special case for finite Lebesgue measure spaces was resolved by Ryff~\cite{R,Ryff,Ryff68}, who showed that the set of all extreme point
 of the orbit $\Omega(f)$ coincides with the set of all measurable functions having the same decreasing rearrangement with $f$.
However,
in full generality, Luxemburg's problem was answered only recently in  \cite{DHS}.
In particular, this provides a   variant of the equivalence between (b) and (c) in Theorem \ref{AU} for arbitrary finite measure spaces.
Ryff \cite{Ryff63}
also asked
for the equivalence between (a) and (b) in Theorem \ref{AU} in the setting of finite Lebesgue  measure spaces, i.e.,
{\it  whether
the doubly stochastic orbit and the orbit in the sense of Hardy--Littlewood--P\'{o}lya coincide,}
 and answered affirmatively later in \cite{R}.
 This result was extended by Day \cite{Day} to   arbitrary  finite measure spaces.

In the particular case of finite von Neumann algebras, Alberti--Uhlmann problem can be viewed as the noncommutative counterpart of problems  due to Luxemburg and Ryff,
which
has been widely studied during the past decades.
Recall that the Hardy--Littlewood--P\'{o}lya majorization in finite matrices is defined in terms of vectors of eigenvalues of finite matrices.
The eigenvalue function of  a self-adjoint operator, the analogue of vector of  eigenvalues of a Hermitian finite matrices,  in the noncommutative $L_1$-space affiliated with a finite von Neumann algebra was introduced by  Murray and von Neumann \cite{MN36} (also by Grothendieck~\cite{Gro},  by Ov\v{c}innikov \cite{Ov} and by Petz\cite{P}).
In terms of eigenvalues functions, Kamei~\cite{Kamei83,Kamei} and Hiai~\cite{Hiai} defined
Hardy--Littlewood--P\'{o}lya majorization in this setting. 
Since then,
several mathematicians have made contribution to
 Alberti--Uhlmann problem for  the positive core of a finite von Neumann algebra \cite{Hiai,HN2,HN1,CKSa,Sukochev,S85}.
In the noncommutative $L_1$-space affiliated with
a   finite von Neumann algebra,
the extreme points of the orbit (in the sense of Hardy--Littlewood--P\'{o}lya) of a self-adjoint operator   were fully characterized  in \cite{DHS}.
In Section~\ref{AUF},
we complement results in \cite{DHS,Hiai,HN1}
  by proving the equivalence between (a) and (b) in Theorem~\ref{AU} in this setting   finite von Neumann algebra  (see Theorem~\ref{th:f}), which
provides a resolution of the noncommutative counterpart of  Ryff's question for doubly stochastic orbits in this setting.

In the full generality, Alberti--Uhlmann problem can be viewed as a noncommutative and infinite version of
Luxemburg's problem  and Ryff's problem at the same time.
Due to the complicated nature of an infinite  von Neumann algebra,
Alberti--Uhlmann problem (even in the setting of infinite measure spaces) is substantially more difficult than Luxemburg's problem  and Ryff's problem.
From now on, we focus on Alberti--Uhlmann problem in the strictly infinite setting.
This setting however is plagued by numerous technical difficulties and below, we explain their origin and earlier attempts to overcome those.

In 1946, Birkhoff \cite{Birkhoff46} showed that
the extreme points of the set of all  doubly stochastic matrices are permutation matrices
and asked for an extension of this result to the infinite-dimensional case \cite[Problem 111]{Birkhoff48}.
Since then, doubly stochastic operators  have been  actively studied   by many mathematicians such as Isbell \cite{Isbell,Isbell55}, Hiai\cite{Hiai,HN1}, Kaftal and Weiss \cite{KW11,KW08,KW11} (see also \cite{Weiss}), Kendall \cite{Kendall}, Komiya \cite{Komiya},  Rattray and Peck \cite{RP},  R\'{e}v\'{e}sz \cite{Revesz},  Sakamaki and Takahashi \cite{ST}, and Tregub\cite{Tregub81,Tregub86}.
In particular, the extreme points of  infinite stochastic  matrices are permutation matrices \cite{Kendall,Revesz,Isbell,Isbell55}.
However, the description of extreme points of doubly stochastic operator on $l_\infty$ (or a von Neumann algebra) is still unclear (see \cite{Hiai,Tregub81,Tregub86} and references therein).
The notion of  Hardy--Littlewood--P\'{o}lya majorization for infinite sequences has been discussed by various authors (see \cite[p.25]{MOA} and references therein).
A natural question in this area is about the equivalence between
conditions (a) and (b) in Theorem \ref{AU} in the infinite setting, that is,
\emph{does  there exist an  element $y\in l_\infty$ such that the doubly stochastic orbit of $y$ does not coincide with its orbit in the sense of Hardy--Littlewood--P\'{o}lya?} (see next paragraph for a more general question by Hiai \cite{Hiai}).
As far as we know,
there are no prior known examples showing that these two conditions are not equivalent in the general infinite  setting.
Nevertheless, there are results
giving partial answers to this question under additional assumptions such as the vectors are positive and decreasing \cite{Komiya,ST,KW11}.
In Section~\ref{s:h}, we provide a negative answer to  this question in the full generality.

In the setting of finite von Neumann algebras, Hiai\cite[Theorem 4.7 (1)]{Hiai} showed that the doubly stochastic orbit of a positive operator coincides with its Hardy--Littlewood--P\'{o}lay orbit.
Due to the lack of natural meaning of Hardy--Littlewood--P\'{o}lya majorization in the infinite case,
there are several different  extensions of this notion  in the literature (see e.g. \cite{NRS,Skoufranis,Hiai,HN1,AU,KS}).
 Hiai and Nakamura \cite{HN1} (see also \cite[p. 25]{MOA} and \cite{NRS,Skoufranis})
  provided a natural definition of Hardy--Littlewood--P\'{o}lya majorization in terms of eigenvalue functions
 in the setting of infinite von Neumann algebras:
\begin{quote}
  Let $L_1(\cM,\tau)$ be the noncommutative $L_1$-space affiliated with a semifinite von Neumann algebra equipped with a faithful normal semifinite trace $\tau$.
  Let $x,y\in L_1(\cM,\tau)$ be self-adjoint.
  Then,  $x$ is said to be majorized by $y$ (denoted by $x\prec y$) in the sense of Hardy--Littlewood--P\'{o}lya  if
  $x_+\prec \prec y_+$, $x_-\prec \prec y_-$ and $\tau(x)=\tau(y)$ \cite[p. 7]{HN1}.
  Here, $\prec \prec $ stands for the Hardy--Littlewood--P\'{o}lya submajorization.
\end{quote}
In the finite setting, the above definition coincides with the classic definition of Hardy--Littlewood--P\'{o}lya majorization\cite{HN1}.
It turns out that doubly stochastic operators behave quite differently  in the infinite setting from the finite setting and Hiai couldn't extend  \cite[Theorem 4.7 (1)]{Hiai}  to  inifinite von Neumann algebras.
He
conjectured that  \cite[p. 40]{Hiai}):
\begin{quote}
Because the set of all doubly stochastic operators on a von Neumann algbra is no longer BW-compact\cite{Arveson} when the trace is infinite, the assumption that the trace is finite  in \cite[Theorem 4.7 (1)]{Hiai}  seems essential.
\end{quote}
Hiai didn't provide an example showing that the ``finite trace'' condition is sharp.
In Section \ref{s:h}, we provide several examples confirming Hiai's conjecture.
Moreover, we show that
for a semifinite von Neumann algebra $\cM$,
the doubly stochastic orbit of $y$ coincides with its Hardy--Littlewood--P\'{o}lya orbit
for any self-adjoint element $y\in L_1(\cM,\tau)$
 if and only if $\cM$ is not $\sigma$-finite equipped with a faithful normal semifinite infinite trace  $\tau$.
In particular, this result shows that
Hiai's conjecture is true when $\cM$ is  $\sigma$-finite and $\tau$ is infinite but
 this  conjecture fails for non-$\sigma$-finite infinite von Neumann algebras.


The following theorem is the main result  of the present paper, which  answers (extensions of) problems due to Luxemburg \cite{Luxemburg}, due to Ryff\cite{Ryff63}, due to Hiai\cite{Hiai}, and due to Alberti and Uhlmann \cite{AU} in the  setting  of infinite von Neumann algebras (all notations are introduced in Section \ref{prel}).
It extends numerous existing results in the literature such as \cite{DHS,Ryff63,Ryff,R,Ryff68,ST,Komiya,Hiai,HN1,KW08,KW}.
 \begin{theorem}\label{main}
 Let $\cM$ be a semifinite  von Neumann algebra equipped with a semifinite infinite faithful normal trace $\tau$.
 For any $y\in L_1(\cM,\tau)_h$ and  any $x\in L_1(\cM,\tau)_h$, the following conditions are equivalent:
 \begin{enumerate}
  \item[(a).] there exists two   semifinite von Neumann algebra  $(\cA,\tau_1)$ and $(\cB,\tau_2)$ with $\cM\subset \cA , \cB$ and the restriction of $\tau_1$ (and $\tau_2$) on $\cM$ coincides with $\tau$,  and
   there exists a (normal) doubly stochastic  operator $\varphi:\cA\to \cB$ such that $\varphi(y)=x$.
   \item[(b).] $x\in \Omega(y)$, i.e.,  $x\prec y$ (see Proposition \ref{majtraceequ} and  Theorems \ref{stoinfi} and \ref{stopureinf});
     \item[(c).]$\tau(x) =\tau(y)$, $\tau((x-r)_+)\le \tau((y-r)_+)$ and
  $\tau((-x-r)_+)\le \tau((-y-r)_+)$ for all $r\in \bR$ (see Proposition \ref{treq});
   \item[(d).]  $\tau(f(x)) \le \tau(f(y))$ for all convex function $f$ on $\bR$ with $f(0)=0$ such that $f(x)$ and $f(y)$ are integrable (see Proposition \ref{btod});

 \end{enumerate}
 In addition, the extreme points of $\Omega(y)$
 are those elements $ x \in L_1(\cM,\tau)_h$ satisfying that (see Theorem \ref{th:ext}) for $x_+$ and $y_+$ and for  any $t\in  (0,\infty)$, one of the following options holds:
       \begin{enumerate}
  \item[(i).]   $\lambda(t;x_+)= \lambda(t;y_+)$;
  \item [(ii).]
   $\lambda(t;x_+) \ne \lambda(t;y_+)$ with  the spectral projection $ E^{x_+} \{\lambda (t;x_+)  \} $ being an atom in $\cM$ and $$\int_{ \{s;\lambda (s;x_+)=\lambda (t;x_+)\}}  \lambda (s;y_+)ds    =  \lambda(t ;x_+)     \tau(E^{x_+} (\{\lambda (t;x_+)\}))  , $$
\end{enumerate}
and,  for $x_-$ and $y_-$, for any $t\in  (0,\infty )$, one of the following options holds:
       \begin{enumerate}
  \item[(i).]   $\lambda(t;x_-)=  \lambda(t;y_-)$;
  \item [(ii).]
   $\lambda(t;x_-) \ne \lambda(t;y_-)$ with  the spectral projection $ E^{x_-} \{\lambda (t;x_-)  \} $ being an atom in $\cM$ and $$\int_{ \{s;\lambda (s;x_-)=\lambda (t;x_-)\}}  \lambda (s;y_-)ds    =  \lambda(t ;x_-)     \tau(E^{x_-} (\{\lambda (t;x_-)\}))  .  $$
\end{enumerate}
Moreover, if $\cM$ is a semifinite infinite factor, then $x\in \overline{\{uyu^*:u\in \cU(\cM)\}}^{\|\cdot\|_1}$ \cite[Theorem 3.5]{HN1};
$\cA$ and $\cB$ can be chosen to be  $\cM$ if and only if $\cM$ is not $\sigma$-finite (see Theorems \ref{sigmanot} and  \ref{stopureinf}).
 \end{theorem}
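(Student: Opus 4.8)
The equivalences $\mathrm{(b)}\Leftrightarrow\mathrm{(c)}\Leftrightarrow\mathrm{(d)}$ form the ``soft'' part and involve no doubly stochastic maps. The plan is first to record, for self-adjoint $z\in L_1(\cM,\tau)$ and $r>0$, the elementary identity $\tau((z-r)_+)=\int_0^\infty(\lambda(s;z)-r)_+\,ds$, together with the classical Hardy--Littlewood fact that $\lambda(\cdot;x_+)\prec\prec\lambda(\cdot;y_+)$ if and only if $\tau((x-r)_+)\le\tau((y-r)_+)$ for all $r>0$ (and symmetrically, with $x,y$ replaced by $-x,-y$, for the negative parts). Since for $r<0$ both sides of the inequalities in (c) are infinite, and since the case $r=0$ --- namely $\tau(x_+)\le\tau(y_+)$ and $\tau(x_-)\le\tau(y_-)$ --- follows by letting $r\to0^{+}$, the hypothesis $\tau(x)=\tau(y)$ upgrades the two submajorizations to the equalities $\tau(x_\pm)=\tau(y_\pm)$; this yields $\mathrm{(b)}\Leftrightarrow\mathrm{(c)}$. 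For $\mathrm{(c)}\Leftrightarrow\mathrm{(d)}$ I would use the integral representation $f(t)=\alpha t+\int_0^\infty(t-r)_+\,d\nu_{+}(r)+\int_0^\infty(-t-r)_+\,d\nu_{-}(r)$ of a convex $f$ with $f(0)=0$, apply $\tau$, and cancel the linear term by $\tau(x)=\tau(y)$; the converse is immediate on taking $f(t)=(t-r)_+$ and $f(t)=(-t-r)_+$. The only care needed is with integrability and with the passage between finite-trace and infinite-trace tails; this is Propositions~\ref{majtraceequ}, \ref{treq} and \ref{btod}.

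Next, $\mathrm{(a)}\Rightarrow\mathrm{(c)}$, which is also easy. A normal doubly stochastic $\varphi\colon\cA\to\cB$ is an $L_1$-contraction, hence has an adjoint $\varphi^{*}\colon\cB\to\cA$ with $\tau_1(a\,\varphi^{*}(b))=\tau_2(\varphi(a)\,b)$, and --- using normality --- $\varphi^{*}$ sends each projection $p\in\cB$ with $\tau_2(p)<\infty$ to a positive contraction $q=\varphi^{*}(p)\le 1$ in $\cA$ with $\tau_1(q)=\tau_2(p)$. With $x=\varphi(y)$ and $p=E^{x}((r,\infty))$, $r>0$, one obtains $\tau((x-r)_+)=\tau_2(\varphi(y)p)-r\tau_2(p)=\tau_1(yq)-r\tau_1(q)=\tau_1(q^{1/2}(y-r)q^{1/2})\le\tau_1(q^{1/2}(y-r)_+q^{1/2})\le\tau_1((y-r)_+)$, the last step using $0\le q\le 1$. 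The mirror inequality follows from $-x=\varphi(-y)$, and $\tau(x)=\tau_2(\varphi(y))=\tau_1(y)=\tau(y)$; together with the first paragraph this also gives $\mathrm{(a)}\Rightarrow\mathrm{(b)},\mathrm{(d)}$.

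The substance is $\mathrm{(b)}\Rightarrow\mathrm{(a)}$, in three stages. \emph{Reduction to positive operators.} Since $x_+,x_-$ sit on the orthogonal spectral projections $E^{x}((0,\infty))$ and $E^{x}((-\infty,0))$ (and likewise for $y$), it suffices to construct normal doubly stochastic maps sending $y_+\mapsto x_+$ and $y_-\mapsto x_-$ and then to glue them, using $\tau(x_\pm)=\tau(y_\pm)$ and a trivial doubly stochastic map on the complementary corners, and enlarging $\cA$ or $\cB$ by an infinite-trace reservoir whenever a complementary corner is too small to carry the (possibly larger) support of the image. \emph{Passage to an abelian model.} For positive $a\prec b$ in $L_1(\cM,\tau)$, the von Neumann subalgebra $\cN_b\subseteq\cM$ generated by $b$ is $\ast$-isomorphic to $L_\infty$ of a subset of $(0,\infty)$ carrying Lebesgue measure, with $b$ corresponding to $\lambda(\cdot;b)$, and --- the trace being tracial --- the trace preserving conditional expectation $\cM\to\cN_b$ exists and is doubly stochastic; so it is enough to build a doubly stochastic $T$, possibly valued in a larger abelian algebra, with $T\lambda(\cdot;b)=\lambda(\cdot;a)$, and to inflate it to a normal doubly stochastic map into a suitable over-algebra $\cB\supseteq\cM$. \emph{The abelian, $\sigma$-finite, infinite construction.} Given decreasing $\lambda(\cdot;a)\prec\lambda(\cdot;b)$ on $(0,\infty)$ with equal finite integral, build a bistochastic kernel realizing $T\lambda(\cdot;b)=\lambda(\cdot;a)$ by exhausting $(0,n)\uparrow(0,\infty)$, solving each finite-measure stage by classical Birkhoff--Ryff--Day theory (with Theorem~\ref{AU} as the finite-dimensional input), and passing to the limit, the mass escaping to infinity being absorbed by the enlargement. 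This last step is the crux: as Hiai observed, for an infinite trace the doubly stochastic operators are no longer BW-compact, so the naive limit of the finite-stage maps is in general neither unital nor trace preserving; pinning down the minimal enlargement that repairs this is the main obstacle, carried out in Theorems~\ref{stoinfi} and \ref{stopureinf}.

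Finally, the extreme points, the factor case and the $\sigma$-finiteness dichotomy. For the extreme-point description I would argue as in \cite{DHS}: the stated conditions on $x_+$ versus $y_+$ and on $x_-$ versus $y_-$ are precisely those that rigidify $x$ inside $\Omega(y)$ --- off the distinguished atoms $\lambda(\cdot;x_\pm)$ must agree with $\lambda(\cdot;y_\pm)$, and on those atoms neither the eigenvalue nor its multiplicity can be altered without breaking a submajorization inequality or the normalization $\tau(x)=\tau(y)$ --- so every convex decomposition $x=\tfrac12(x_1+x_2)$ with $x_1,x_2\in\Omega(y)$ is trivial; conversely, if the conditions fail at some level there is either a non-atomic flat piece of $x_+$ or $x_-$ on which eigenvalues may be reshuffled, or slack in a submajorization inequality permitting a transfer of mass, and in either case one manufactures a nonzero self-adjoint $z$ with $x\pm\e z\in\Omega(y)$; this is Theorem~\ref{th:ext}. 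The factor statement is \cite[Theorem~3.5]{HN1}. As for the dichotomy: if $\cM$ is not $\sigma$-finite then, the supports of $x$ and $y$ being $\sigma$-finite, $\cM$ carries off those supports an arbitrarily large orthogonal family of projections into which the construction of the third stage can be internalized, so one may take $\cA=\cB=\cM$ (Theorem~\ref{stopureinf}); whereas for $\sigma$-finite $\cM$ with infinite trace one exhibits an explicit $x\prec y$ --- with $\lambda(\cdot;y)$ of full support, leaving no room inside $\cM$ to relocate the mass that a hypothetical $\varphi\colon\cM\to\cM$ would have to transport --- for which no normal doubly stochastic $\varphi$ on $\cM$ sends $y$ to $x$ (Theorem~\ref{sigmanot} and the examples of Section~\ref{s:h}), which confirms Hiai's conjecture.
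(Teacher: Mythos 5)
Your outline of the soft equivalences and of (a)$\Rightarrow$(c) is sound, and your route from (c) to (d) via the integral representation $f(t)=\alpha t+\int(t-r)_+\,d\nu_+(r)+\int(-t-r)_+\,d\nu_-(r)$ is a legitimate alternative to the paper's Proposition~\ref{btod}, which instead runs through the decomposition of Proposition~\ref{partition} and the finite case. But your treatment of (b)$\Rightarrow$(a) has a genuine gap, and it starts with a false assertion: for $x\prec y$ in the sense of Definition~\ref{definfi}, the condition $\tau(x)=\tau(y)$ does \emph{not} upgrade $x_\pm\prec\prec y_\pm$ to $\tau(x_\pm)=\tau(y_\pm)$. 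Take $y=\chi_{(0,2)}-\chi_{(2,4)}$ and $x=\chi_{(0,1)}-\chi_{(1,2)}$ in $L_1(0,\infty)$: then $x_+\prec\prec y_+$, $x_-\prec\prec y_-$ and $\tau(x)=0=\tau(y)$, yet $\tau(x_+)=1<2=\tau(y_+)$. (The equalities $\tau(x_\pm)=\tau(y_\pm)$ hold for \emph{extreme points} of $\Omega(y)$ by Lemma~\ref{lemma:exeq}, not for general $x\in\Omega(y)$.) Consequently your proposed reduction --- build separate trace-preserving maps $y_+\mapsto x_+$ and $y_-\mapsto x_-$ and glue --- cannot work: a doubly stochastic map preserves the trace, so no such map exists when $\tau(x_+)<\tau(y_+)$. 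The construction must be allowed to transport surplus mass of $y_+$ onto the region where $x$ vanishes or is negative, and the entire purpose of Proposition~\ref{partition} is to organize this: it couples the leftover sets $A_0$, $B_0$ (where $\mu(y_\pm)$ strictly dominates $\mu(x_\pm)$ with no compensation inside the same sign) across the positive and negative parts, using only $\tau(x)=\tau(y)$ to balance them.

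The second, related gap is that the crux of (b)$\Rightarrow$(a) is described but not resolved. Your third stage --- exhaust $(0,n)\uparrow(0,\infty)$, solve each finite stage by Ryff--Day, and pass to the limit --- is precisely the scheme that fails for an infinite trace (the limit need be neither unital nor trace preserving, as you yourself note), and you leave ``pinning down the minimal enlargement'' as an unresolved obstacle. The paper takes no limit at all: Proposition~\ref{partition} produces, from a single majorization $x\prec y$, a simultaneous partition of (copies of) the supports of $x$ and $y$ into countably many \emph{matched} finite-measure blocks $B_n$, $A_n$ with $m(A_n)=m(B_n)$ and $(x\oplus0)\chi_{B_n}\prec(y\oplus0)\chi_{A_n}$ for every $n$ (built from Ryff's measure-preserving transformations and padded by pieces of an auxiliary $L_\infty(0,\infty)$ reservoir). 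One then applies Theorem~\ref{th:f} (Day's theorem) on each block and takes the \emph{direct sum}, which is automatically positive, unital, trace preserving and normal; the enlargement $\cM\oplus L_\infty(0,\infty)$ is exactly the reservoir needed for the padding, and Lemma~\ref{lemma:z} absorbs it into $\cM$ in the non-$\sigma$-finite case. Without this blockwise decomposition (or an equivalent device) your sketch does not prove (b)$\Rightarrow$(a). The remaining parts of your outline (extreme points following \cite{DHS}, the factor case, the $\sigma$-finiteness dichotomy via the $\ell_1$ counterexample) agree in spirit with the paper.
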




The authors would like to thank  Jean-Christophe Bourin,  Thomas Scheckter and Dmitriy Zanin for their helpful discussion and  useful  comments.

\section{Preliminaries}\label{prel}
In this section,
we recall some notions of the theory of noncommutative integration.
In what follows,  $\cH$ is a  Hilbert space and $B(\cH)$ is the
$*$-algebra of all bounded linear operators on $\cH$ equipped with the uniform norm $\left\|\cdot \right\|_\infty$, and
$\mathbf{1}$ is the identity operator on $\cH$.
Let $\mathcal{M}$ be
a von Neumann algebra on $\cH$.
We denote by $\cP(\cM)$ the collection  of all projections in $\cM$ and by $\cU(\cM)$ the collection of all unitary elements.
For details on von Neumann algebra
theory, the reader is referred to e.g.  \cite{KR1, KR2}
or \cite{Tak}. General facts concerning measurable operators may
be found in \cite{Nelson} and  \cite{Se} (see also
\cite[Chapter
IX]{Ta2} and the forthcoming book \cite{DPS}).
For convenience of the reader, some of the basic
definitions are recalled.

\subsection{$\tau$-measurable operators and generalized singular value functions}

A closed, densely defined operator $x:\mathfrak{D}\left( X\right) \rightarrow \cH $ with the domain $\mathfrak{D}\left( x\right) $ is said to be {\it affiliated} with $\mathcal{M}$
if $yx\subseteq xy $ for all $Y\in \mathcal{M}^{\prime }$, where $\mathcal{M}^{\prime }$ is the commutant of $\mathcal{M}$.
A  closed,
densely defined
operator $x:\mathfrak{D}\left( x\right) \rightarrow \cH $ affiliated with $\cM $ is said to be
{\it measurable}  if  there exists a
sequence $\left\{ p_n\right\}_{n=1}^{\infty}\subset \cP\left(\mathcal{M}\right)$, such
that $p_n\uparrow \mathbf{1}$, $p_n(\cH)\subseteq\mathfrak{D}\left(X\right) $
and $\mathbf{1}-p_n$ is a finite projection (with respect to $\mathcal{M}$)
for all $n$.
 The collection of all measurable
operators with respect to $\mathcal{M}$ is denoted by $S\left(
\mathcal{M} \right) $, which is a unital $\ast $-algebra
with respect to strong sums and products (denoted simply by $x+y$ and $xy$ for all $x,y\in S\left( \mathcal{M%
}\right) $).

Let $X$ be a self-adjoint operator affiliated with $\mathcal{M}$.
We denote its spectral measure by $\{E^X\}$.
 It is well known that if
$X$ is an operator affiliated with $\mathcal{M}$ with the
polar decomposition $X = U|X|$, then $U\in\mathcal{M}$ and $E\in
\mathcal{M}$ for all projections $E\in \{E^{|X|}\}$. Moreover,
$X\in S(\mathcal{M})$ if and only if  $E^{|X|}(\lambda,
\infty)$ is a finite projection for some $\lambda> 0$. It follows
immediately that in the case when $\mathcal{M}$ is a von Neumann
algebra of type $III$ or a type $I$ factor, we have
$S(\mathcal{M})= \mathcal{M}$. For type $II$ von Neumann algebras,
this is no longer true. From now on, let $\mathcal{M}$ be a
semifinite von Neumann algebra equipped with a faithful normal
semifinite trace $\tau$.

An operator $x\in S\left( \mathcal{M}\right) $ is called $\tau$-measurable if there exists a sequence
$\left\{p_n\right\}_{n=1}^{\infty}$ in $P\left(\mathcal{M}\right)$ such that
$p_n\uparrow \mathbf{1}$, $p_n\left(\cH\right)\subseteq \mathfrak{D}\left(x\right)$ and
$\tau(\mathbf{1}-p_n)<\infty $ for all $n$.
The collection $S\left( \mathcal{M}, \tau\right)
$ of all $\tau $-measurable
operators is a unital $\ast $-subalgebra of $S\left(
\mathcal{M}\right) $.
It is well known that a linear operator $x$ belongs to $S\left(
\mathcal{M}, \tau\right) $ if and only if $x\in S(\mathcal{M})$
and there exists $\lambda>0$ such that $\tau(E^{|x|}(\lambda,
\infty))<\infty$.
Alternatively, an unbounded operator $x$
affiliated with $\mathcal{M}$ is  $\tau$-measurable~(see
\cite{FK}) if and only if
$$\tau\left(E^{|x|}\bigl(n,\infty\bigr)\right)\rightarrow 0,\quad n\to\infty.$$
The set of all self-adjoint elements in $S(\mathcal{M},\tau)$ is denoted by $S(\mathcal{M},\tau)_h$,
which is a real linear subspace of $S(\mathcal{M},\tau)$.
The set of all positive elements in $S(\mathcal{M},\tau)_h$ is denoted by $S(\mathcal{M},\tau)_+$.

\begin{definition}\label{mu}
Let a semifinite von Neumann  algebra $\mathcal M$ be equipped
with a faithful normal semi-finite trace $\tau$ and let $x\in
S(\mathcal{M},\tau)$. The generalized singular value function $\mu(x):t\rightarrow \mu(t;x)$ of
the operator $x$ is defined by setting
$$
\mu(s;x)
=
\inf\{\left\|xp\right\|_\infty:\ p\in \cP(\cM)\mbox{ with}\ \tau(\mathbf{1}-p)\leq s\}.
$$
\end{definition}
An equivalent definition in terms of the
distribution function of the operator $x$ is the following. For every self-adjoint
operator $x\in S(\mathcal{M},\tau),$ setting the \emph{spectral distribution function} of $x $ by
$$d_x(t)=\tau(E^{x}(t,\infty)),\quad t>0.$$
It is clear that the function $d(x):\mathbb{R}\rightarrow[0,\tau(\mathbf 1)]$ is decreasing and the normality of the trace implies that $d(x)$ is right-continuous.
We have (see e.g. \cite{FK})
\begin{align}\label{dis}
\mu(t; x)=\inf\{s\geq0:\ d_{|x|}(s)\leq t\}.\end{align}
It is obvious  \cite[Remark 3.3]{FK} that \begin{align}\label{disx}
d(|x|)=d(\mu (x)).
\end{align}
An element $x\in S(\cM,\tau)$ is said to be $\tau$-compact if $\mu(t;x)\to 0$ as $t\to \infty$.
We denote by
$S_0(\cM,\tau)$ the subspace of $S(\cM,\tau)$ consisting of all $\tau$-compact elements in $S(\cM,\tau)$.


Consider the algebra $\mathcal{M}=L^\infty(0,\infty)$ of all
Lebesgue measurable essentially bounded functions on $(0,\infty)$.
The algebra $\mathcal{M}$ can be seen as an abelian von Neumann
algebra acting via multiplication on the Hilbert space
$\mathcal{H}=L^2(0,\infty)$, with the trace given by integration
with respect to Lebesgue measure $m.$
It is easy to see that the
algebra of all $\tau$-measurable operators
affiliated with $\mathcal{M}$ can be identified with
the subalgebra $S(0,\infty)$ of the algebra of Lebesgue measurable functions $L_0(0,\infty)$ which consists of all functions $x$ such that
$m(\{|x|>s\})$ is finite for some $s>0$.
It should also be pointed out that the
generalized singular value function $\mu(x)$ is precisely the
decreasing rearrangement $\mu(x)$ of the function $|x|$ (see e.g. \cite{KPS}) defined by
$$\mu(t;x)=\inf\{s\geq0:\ m(\{|x|\geq s\})\leq t\}.$$
%
%

If $\mathcal{M}=B(\cH)$ (respectively, $l_\infty$) and $\tau$ is the
standard trace ${\rm Tr}$ (respectively, the counting measure on
$\mathbb{N}$), then it is not difficult to see that
$S(\mathcal{M})=S(\mathcal{M},\tau)=\mathcal{M}.$ In this case,
for $x\in S(\mathcal{M},\tau)$ we have
$$\mu(n;x)=\mu(t; x),\quad t\in[n,n+1),\quad  n\geq0.$$
The sequence $\{\mu(n;x)\}_{n\geq0}$ is just the sequence of singular values of the operator~$x$.

\subsection{Classic Hardy--Littlewood--P\'{o}lya majorization and submajorization}
Let $(L_1(0,\infty),\left\|\cdot\right\|_{L_1(0 ,\infty)})$ be the $L_1$-space of Lebesgue measurable functions on the semi-axis $(0,\infty)$.
The pair
$$L_1(\cM,\tau)=\{x\in S(\cM,\tau):\mu(x)\in L_1(0,\infty)\},\quad \left\|x\right\|_{L_1(\cM,\tau)}:=\left\|\mu(x)\right\|_{L_1(0,\infty)}$$
defines a Banach bimodule affiliated with $\cM$
\cite{Kalton_S} (see also \cite{LSZ,DP2}).
For brevity, we denote $\left\|\cdot \right\|_{L_1(\cM,\tau)}$ by $\left\|\cdot \right\|_1$.
Clearly, $L_1(\cM,\tau)\subset S_0(\cM,\tau)$.
Using the extended trace $\tau:S(\mathcal{M},\tau)_+ \rightarrow[0, \infty]$ to a linear functional on $S(\mathcal{M},\tau),$ denoted again by $\tau$, the noncommutative $L_1$-space 
can be defined by
$$L_1(\mathcal{M},\tau)=\{x\in S(\mathcal{M},\tau):\tau(|x|)<\infty\}.$$
(see e.g. \cite{DDP93,LSZ}).
Let us denote
$L_1(\mathcal{M},\tau)_h=\{x\in L_1(\mathcal{M},\tau):x=x^{\ast}\}$ and $L_1(\mathcal{M},\tau)_+=L_1(\mathcal{M},\tau)\cap S(\mathcal{M},\tau)_+ .$

If $x,y\in S(\cM,\tau)$, then $x$ is said to be submajorized (in the sense of Hardy--Littlewood--P\'{o}lya) by $y$, denoted by $x\prec\prec y$, if \begin{align*} \int_{0}^{t} \mu(s;x) ds \le \int_{0}^{t} \mu(s;y) ds \end{align*} for all $t\ge 0$ \cite{LSZ}.
In particular, for $x,y\in S(0,\infty)$,   $x\prec \prec y$ if and only if  $\int_{0}^{t} \mu(s;x) ds \le \int_{0}^{t} \mu(s;y) ds $, $t\ge 0$.
For any $x,y\in S(\cM,\tau)$, we have \cite{FK,DDSZ}
\begin{align}\label{2.4}
  \mu(xy)\prec \prec  \mu(x)\mu(y).
\end{align}

Assume that  $\mathcal{M}$ is a finite  von Neumann algebra equipped with a faithful normal finite tracial state $\tau$.
We note that the space $S(\cM,\tau)$ is the set of all densely defined closed linear operators $x$ affiliated
with $\mathcal{M} $.
However, if the trace $\tau$ is infinite, then there are densely defined closed linear operators which are not $\tau$-measurable.


We introduce the notion of spectral scales (see  \cite{P}, see also \cite{HN1, HN2, DDP, DPS,AM}).
If $x\in S(\mathcal{M},\tau)_h$, then the \emph{spectral scales} (also called \emph{eigenvalue functions}) $\lambda(x):[0,1)\rightarrow(-\infty,\infty]$ 
$$\lambda(t;x)=\inf\{s\in\mathbb{R}\ :\ d(s;x)\leqslant t\},\ \ t\in[0,1).$$
The spectral scale $\lambda(x)$   is decreasing right-continuous functions.
If $x\in S(\mathcal{M},\tau)_+$, then it is evident that $\lambda(t;x)=\mu(t;x)$ for all $t\in[0,1)$.

Assume that  $\mathcal{M}=L_{\infty}(0,1)$ and $\tau(f)=\int_{0}^1 fdm$,
 where  $m$ the   Lebesgue measure on $(0,1)$.
 In this case, $S (\mathcal{M},\tau)_h$ consists of all real measurable functions $f$ on $(0,1)$. For every $f$, $\lambda(f)$ coincides with  the \emph{right-continuous equimeasurable nonincreasing rearrangement} $\delta_f$ of $f$ (see e.g. \cite{HN1}):
$$\lambda(t;f)=\delta_f(t)=\inf\{s\in\mathbb{R}:\ m(\{x\in X:\,f(x)>s\})\leqslant t\},\ \ t\in[0,1 ).$$

We note that for every $x\in L_1(\cM,\tau)_h$, we have (see e.g. \cite{DDP}, \cite[Proposition 1]{P} and \cite[Chapter III, Proposition 5.5]{DPS})
\begin{equation}\label{spectral scale}
  \tau(x)=\int_0^1\lambda(t;x)dt.
\end{equation}




For every $f,g\in L_1(\cM,\tau),$ $g$ is said to be  \emph{majorized} by $f$ in the sense of Hardy--Littlewood--P\'{o}lya (written by $g\prec f$)  if
$$\int_0^s\lambda(t;g)dt\leqslant\int_0^s\lambda(t;f)dt$$
for all $s\in[0,1)$ and
$$\int_0^{1}\lambda(t;g)dt=\int_0^{1}\lambda(t;f)dt.$$

For  a self-adjoint element $y\in L_1(\mathcal{M},\tau)_h$, we denote
$$\Omega(y):=\{x\in L_1(\mathcal{M},\tau)_h\ :\ x\prec y\}.$$
We note \cite[Theorem 3.2]{HN2}  that
\begin{align}\label{tri}
\lambda (x) -\lambda (y)\prec \lambda (x- y), ~\forall x,y\in L_1(\cM,\tau)_h.
\end{align}

Let $\cM$ be a semifinite von Neumann algebra.
For every $x\in S_0(\cM,\tau)_+$ and  $s\in \mathbb{R}$, if $e\in \cP(\cM)$ is such that $$E^x(s,\infty) \le e \le E^x [ s,\infty ),$$
then  (see \cite{DDP}  or \cite[Chapter III, Proposition  2.10 and Lemma 7.10]{DPS}):
\begin{align}\label{2.2}
\mu (t;xe) =\mu (t;x) \mbox{ for all } t\in [0,\tau(e)),
\end{align}
and
\begin{align}\label{2.3}
\mu (t;xe^\perp ) =\mu (t+\tau(e);x) \mbox{ for all } t\in [0,\tau(e^\perp )).
\end{align}


\subsection{Hardy--Littlewood--P\'{o}lya majorization in the infinite setting}
The definition of classic  Hardy--Littlewood--P\'{o}lya majorization can not be extended to the infinite setting directly.
We adopt the definition suggested by Hiai and Nakamura \cite{HN1} (see also \cite{NRS,ST}).
 For definition in the setting of positive infinite sequences, see \cite{Komiya,KW,LW15,Weiss,KW11,JLW,KW08}).
\begin{definition}\label{definfi}
Let $\cM$ be a semifinite von Neumann algebra equipped with a semifinite faithful normal trace $\tau$.
Let $x,y\in L_1(\cM,\tau)_h$.
We say that $x$ is majorized by $y$ (in the sense of Hardy--Littlewood--P\'{o}lya, denoted by $x\prec y$) if
$x_+\prec \prec y_+$, $x_-\prec \prec y_-$ and $\tau(x)=\tau(y)$.
\end{definition}
When $\tau$ is finite,  Definition \ref{definfi} coincides with the classic definition of Hardy--Littlewood--P\'{o}lya majorization \cite[Proposition 1.3]{HN1}.

We show the equivlance between (b) and (c) in Theorem \ref{main} (similarly results for positive operators can be found in  \cite{NRS,Skoufranis,Hiai}).

  \begin{proposition}\label{treq}Let $\cM$ be a semifinite von Neumann algebra equipped with a semifinite faithful normal trace $\tau$.
Let $x,y\in L_1(\cM,\tau)_h$.
Then,
  $x\prec y$ if and only if
  $\tau((x-t)_+) \le \tau((y-t)_+)$ and  $\tau((-x-t)_+) \le \tau((-y-t)_+)$  for all $t\in \bR$ (or $\bR_+$)  and $\tau(x)=\tau(y)$.
  \end{proposition}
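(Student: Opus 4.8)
The plan is to split $x$ and $y$ into positive and negative parts and reduce to a Legendre-type duality for positive operators. The first step is the elementary observation that for every $t\ge 0$ and every $z\in L_1(\cM,\tau)_h$ one has the operator identity $(z-t)_+=(z_+-t)_+$, which is just the scalar identity $(\lambda-t)_+=((\lambda)_+-t)_+$ ($\lambda\in\bR$, $t\ge 0$) pushed through the continuous functional calculus of $z$. Applying it to $z=x$ and to $z=-x$ (so that $z_+=x_-$), and likewise to $y$, gives $\tau((x-t)_+)=\tau((x_+-t)_+)$, $\tau((-x-t)_+)=\tau((x_--t)_+)$ and the analogous identities for $y$, valid for all $t\ge 0$. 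Thus, for $t$ ranging over $\bR_+$, the two families of trace inequalities in the statement say exactly that $\tau((x_+-t)_+)\le\tau((y_+-t)_+)$ and $\tau((x_--t)_+)\le\tau((y_--t)_+)$ for all $t\ge 0$.

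Next I would prove the positive case: for $a,b\in L_1(\cM,\tau)_+$, $a\prec\prec b$ if and only if $\tau((a-t)_+)\le\tau((b-t)_+)$ for all $t\ge 0$. Since $\lambda\mapsto(\lambda-t)_+$ is increasing, continuous and vanishes at $0$, $(a-t)_+$ is positive with singular value function $\mu(\cdot;(a-t)_+)=(\mu(\cdot;a)-t)_+$; computing traces of positive operators from their singular value functions (\cite{FK}) then gives, for all $s,t\ge 0$, the pair of mutually dual identities
\[
\int_0^s\mu(u;a)\,du=\inf_{t\ge 0}\big(ts+\tau((a-t)_+)\big),\qquad \tau((a-t)_+)=\sup_{s\ge 0}\Big(\int_0^s\mu(u;a)\,du-ts\Big).
\]
For the left identity, ``$\le$'' comes from integrating $\mu(u;a)\le t+(\mu(u;a)-t)_+$ over $[0,s]$, and equality is attained at $t=\mu(s;a)$; the right identity is symmetric, with the supremum attained at $s=d_a(t)$. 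The first identity shows that $\tau((a-t)_+)\le\tau((b-t)_+)$ for all $t$ forces $\int_0^s\mu(u;a)\,du\le\int_0^s\mu(u;b)\,du$ for all $s$, i.e.\ $a\prec\prec b$; the second shows the converse. Combined with the first paragraph, the conjunction of the two $\bR_+$-families of trace inequalities together with $\tau(x)=\tau(y)$ is precisely ``$x_+\prec\prec y_+$, $x_-\prec\prec y_-$ and $\tau(x)=\tau(y)$'', which is $x\prec y$ by Definition~\ref{definfi}. This settles the ``$\bR_+$'' version of the proposition.

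It remains to pass from the ``$\bR_+$'' version to the ``$\bR$'' version. Since ranging $t$ over $\bR$ trivially implies the $\bR_+$-conditions, I only need that $x\prec y$ implies the two trace inequalities for every $t<0$ as well; then all four formulations are equivalent. Put $c=-t>0$. If $\tau(\mathbf 1)=\infty$, then $x,y\in L_1(\cM,\tau)_h$ force $\tau(E^x(-c/2,\infty))=\tau(E^y(-c/2,\infty))=\infty$ (otherwise $(c/2)\,E^x(-\infty,-c/2]\le x_-$ would contradict $\tau(x_-)<\infty$), hence $\tau((x-t)_+)=\tau((x+c)_+)\ge(c/2)\,\tau(E^x(-c/2,\infty))=\infty=\tau((y-t)_+)$, and likewise for $-x,-y$, so the inequalities hold trivially. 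If $\tau(\mathbf 1)<\infty$, then $x\prec y$ is the classical Hardy--Littlewood--P\'{o}lya majorization (\cite[Proposition 1.3]{HN1}), so $\int_0^s\lambda(u;x)\,du\le\int_0^s\lambda(u;y)\,du$ for all $s$; since the spectral scale of $(x+c)_+$ is $(\lambda(\cdot;x)+c)_+$, the same Legendre computation gives $\tau((x+c)_+)=\sup_s\big(cs+\int_0^s\lambda(u;x)\,du\big)$, whence $\tau((x-t)_+)\le\tau((y-t)_+)$; and $x\prec y\Rightarrow -x\prec -y$ handles the $-x,-y$ inequalities.

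The main obstacle, and the only point requiring real care, is the pair of Legendre-type identities: one must treat right-continuity and the intervals on which $\mu(\cdot;a)$ is constant (equivalently, the atoms of $a$) carefully, so that $\int_0^{d_a(t)}(\mu(u;a)-t)\,du$ genuinely equals $\int_0^\infty(\mu(u;a)-t)_+\,du=\tau((a-t)_+)$. Everything else — the functional-calculus identity $(z-t)_+=(z_+-t)_+$, passing from $x$ to $-x$, and the translation into Definition~\ref{definfi} — is routine bookkeeping.
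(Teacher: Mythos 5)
Your proof is correct and follows essentially the same route as the paper: reduce to positive parts via the functional-calculus identity $(z-t)_+=(z_+-t)_+$ and use the equivalence, for positive $a,b$, between $a\prec\prec b$ and $\tau((a-t)_+)\le\tau((b-t)_+)$ for all $t\ge 0$ --- the paper simply cites this as \cite[Proposition 2.3]{Hiai} (see also \cite[Theorem 4]{SZ11}), whereas you reprove it via the Legendre-type duality, which is precisely how those references establish it. One small point in your favour: for $t<0$ the paper's proof asserts $\tau((x-t)_+)=\infty=\tau((y-t)_+)$, which presupposes $\tau(\mathbf 1)=\infty$, while you correctly split off the finite-trace case and settle it with the classical majorization, so your treatment of that edge case is actually the more complete one.
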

  \begin{proof}
  By \cite[Proposition 2.3]{Hiai} (see also \cite[Theorem 4]{SZ11}),
  $x_+\prec \prec y_+$ implies that $\tau((x-t)_+) =\tau((x_+-t)_+) \le \tau((y_+-t)_+)=\tau((y_+-t)_+)$ for all $t>0$.
  The same argument shows that $\tau((x_--t)_+) \le \tau((y_--t)_+)$ for all $t>0$.
  When $t \le 0$, by the spectral theorem,
  we have $$\tau((x-t)_+) =\infty =\tau((y -t)_+)$$
  and $$\tau((-x-t)_-)  =\infty =  \tau((-y -t)_-),$$
  which prove the ``$\Rightarrow$'' implication.

  The ``$\Leftarrow$'' implication follows immediately from  \cite[Proposition 2.3]{Hiai}.
  \end{proof}

\begin{rmk}If $\tau$ is finite, then
the sufficient condition in Proposition \ref{treq} is equivalent with
$\tau((x-t)_+) \le \tau((y-t)_+)$  for all $t\in \bR$  and $\tau(x)=\tau(y)$ (see \cite[Proposition 1.2. (1) and Proposition 1.3]{HN1}).
\end{rmk}

\begin{proposition}\label{HN1}\cite[Proposition 1.1]{HN1} (see also \cite{DPS,LSZ})
If $x\in L_1(\cM,\tau)_h$, then for every $0<s<\tau({\bf 1})$,
$$\int_0^s \mu   (t; x_+) dt =\sup \left\{\tau(xa): a\in \cM, 0\le a\le {\bf 1}, ~\tau(a)=s  \right\}.$$
If $\cM$ is non-atomic, then the above equality holds under the condition that the element $a$ varies in the set of projections $e\in \cP (\cM)$ with $\tau(e)=s$.
\end{proposition}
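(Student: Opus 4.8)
The plan is to establish the two inequalities separately, the passage to projections for non-atomic $\cM$ being a refinement of the lower bound. For the upper bound, note first that for every $a\in\cM$ with $0\le a\le\mathbf 1$ we have $\tau(xa)=\tau(x_+a)-\tau(x_-a)\le\tau(x_+a)$, since $\tau(x_-a)=\tau(x_-^{1/2}ax_-^{1/2})\ge 0$; so it suffices to bound $\tau(x_+a)$. I would use the layer-cake representation $a=\int_0^1 e_\lambda\,d\lambda$ with $e_\lambda:=E^a(\lambda,\infty)\in\cP(\cM)$ (so $\tau(e_\lambda)\le\lambda^{-1}\tau(a)<\infty$ for $\lambda>0$), which gives $\tau(x_+a)=\int_0^1\tau(x_+e_\lambda)\,d\lambda$ and $\int_0^1\tau(e_\lambda)\,d\lambda=\tau(a)=s$. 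For a single projection $e$, combining $\tau(x_+e)\le\|x_+e\|_1=\int_0^\infty\mu(t;x_+e)\,dt$ with \eqref{2.4} (which gives $\mu(x_+e)\prec\prec\mu(x_+)\mu(e)=\mu(x_+)\mathbf 1_{[0,\tau(e))}$, hence $\|x_+e\|_1\le\int_0^{\tau(e)}\mu(t;x_+)\,dt$) shows $\tau(x_+e)\le\phi(\tau(e))$, where $\phi(u):=\int_0^u\mu(t;x_+)\,dt$ is increasing and, since $\mu(x_+)$ is decreasing, concave. Jensen's inequality for $\phi$ and the probability measure $d\lambda$ on $[0,1]$ then yields $\tau(x_+a)\le\int_0^1\phi(\tau(e_\lambda))\,d\lambda\le\phi\!\left(\int_0^1\tau(e_\lambda)\,d\lambda\right)=\phi(s)=\int_0^s\mu(t;x_+)\,dt$, so the supremum is $\le\int_0^s\mu(t;x_+)\,dt$.

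For the lower bound, set $\alpha:=\mu(s;x_+)$ and first treat the generic case $\alpha>0$. Put $e_0:=E^{x_+}(\alpha,\infty)$ and $f:=E^{x_+}\{\alpha\}$; by the definition of $\mu$ one has $\tau(e_0)\le s\le\tau(e_0)+\tau(f)$, so $\tau(f)\ge s-\tau(e_0)\ge 0$. As $\tau$ restricted to $f\cM f$ is semifinite, choose $0\le a'\le f$ with $\tau(a')=s-\tau(e_0)$ (a scalar multiple of $f$ if $\tau(f)<\infty$, a subprojection otherwise) and put $a:=e_0+a'$, so $0\le a\le\mathbf 1$ and $\tau(a)=s$. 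Since $e_0,a'\le E^{x_+}[\alpha,\infty)\le\supp(x_+)$ and $\supp(x_+)\perp\supp(x_-)$, we have $x_-a=0$; hence, using \eqref{2.2} for $\tau(x_+e_0)=\|x_+e_0\|_1=\int_0^{\tau(e_0)}\mu(t;x_+)\,dt$ together with the identity $\mu(t;x_+)=\alpha$ on $[\tau(e_0),s]$ (immediate from the definition of $\mu$), we get $\tau(xa)=\tau(x_+e_0)+\alpha\,\tau(a')=\int_0^s\mu(t;x_+)\,dt$, which not only closes the inequality but shows the supremum is attained. When $\cM$ is non-atomic, $f\cM f$ is non-atomic, so $f$ admits a genuine subprojection $g\le f$ with $\tau(g)=s-\tau(e_0)$, and then $e:=e_0+g\in\cP(\cM)$ satisfies $E^{x_+}(\alpha,\infty)\le e\le E^{x_+}[\alpha,\infty)$, $\tau(e)=s$, and, again by \eqref{2.2}, $\tau(xe)=\int_0^s\mu(t;x_+)\,dt$.

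The point I expect to require care is the degenerate case $\alpha=\mu(s;x_+)=0$, where $\int_0^s\mu(t;x_+)\,dt=\tau(x_+)$ and $\tau(\supp(x_+))\le s$: the residual mass $s-\tau(\supp(x_+))$ must now be placed off $\supp(x_+)$, and placing it on $\supp(x_-)$ strictly decreases $\tau(xa)$. In the setting of the present paper, where $\tau(\mathbf 1)=\infty$, this is resolved by approximation: for each $\varepsilon>0$ the projection $E^{|x|}[0,\varepsilon)-\supp(x_+)$ has infinite trace (because $x\in L_1$ forces $\tau(E^{|x|}[\varepsilon,\infty))<\infty$), so one may choose $g\le E^{|x|}[0,\varepsilon)$ with $g\perp\supp(x_+)$ and $\tau(g)=s-\tau(\supp(x_+))$, giving $a:=\supp(x_+)+g$ with $\tau(xa)\ge\tau(x_+)-\varepsilon\,\tau(g)\to\tau(x_+)$ as $\varepsilon\downarrow 0$; for non-atomic $\cM$ this $g$ is again a subprojection. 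All remaining steps are routine manipulations of spectral projections via \eqref{2.2}--\eqref{2.3}, the elementary bound $\tau(x_+e)\le\|x_+e\|_1$, and Jensen's inequality; the only genuine obstacle is keeping careful track, in the degenerate case, of where the extra spectral mass is allowed to sit relative to $\supp(x_-)$.
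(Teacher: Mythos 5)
The paper itself offers no proof of this proposition --- it is imported verbatim from \cite[Proposition 1.1]{HN1} --- so there is no internal argument to compare against; judged on its own, your proof is correct, and it deviates from the standard one only in the upper bound. The usual route (which the paper itself runs in Remark \ref{posi}, around \eqref{<<<}) is the single chain $\tau(x_+a)\le\|x_+a\|_1\le\int_0^\infty\mu(t;x_+)\mu(t;a)\,dt\le\int_0^s\mu(t;x_+)\,dt$, the last step using only that $0\le\mu(a)\le\mathbf 1$, $\int_0^\infty\mu(t;a)\,dt=s$ and $\mu(x_+)$ is decreasing; your layer-cake decomposition $a=\int_0^1E^a(\lambda,\infty)\,d\lambda$ followed by Jensen for the concave primitive $\phi$ reaches the same bound with more machinery but is valid (the set where $\tau(e_\lambda)=\infty$ is $\lambda$-null, and $\phi$ is bounded by $\tau(x_+)$, so the interchange and Jensen both go through). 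The lower-bound construction $a=E^{x_+}(\alpha,\infty)+a'$ with $a'$ carried by the level set $E^{x_+}\{\alpha\}$, and its projection refinement for non-atomic $\cM$, is exactly the standard one; the only slip is the parenthetical ``a subprojection otherwise'', which should read ``a scalar multiple of a finite-trace subprojection'', since a subprojection of prescribed trace need not exist when $\cM$ has atoms. Your isolation of the degenerate case $\mu(s;x_+)=0$ is not pedantry but a necessary observation: with $\mu(x_+)$ in place of the spectral scale $\lambda(x)$ the stated identity genuinely fails for finite traces (take $x=-\mathbf 1$ in $L_\infty(0,1)$ and $s=1/2$: the left side is $0$, the supremum is $-1/2$), and it is Remark \ref{alwayspositive} (namely $\lambda(x)=\mu(x_+)$ once $\tau(\mathbf 1)=\infty$) that reconciles the present formulation with \cite[Proposition 1.1]{HN1}. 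In that case your $\varepsilon$-argument is sound provided the ``projection $E^{|x|}[0,\varepsilon)-\supp(x_+)$'' is read as the product $E^{|x|}[0,\varepsilon)\,(\mathbf 1-\supp(x_+))=\mathbf 1-\supp(x_+)-E^{x_-}[\varepsilon,\infty)$, which indeed has infinite trace; as written the difference is not a projection because $\supp(x_+)\not\le E^{|x|}[0,\varepsilon)$.
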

We draw reader's attention that \cite[Lemma 4.1]{FK}
contains an  inaccuracy. Namely,
the second assertion in  \cite[Lemma 4.1]{FK} is false in general
 (see \cite[Chapter III, Remark 9.7]{DPS}).

\begin{rmk}\label{alwayspositive}
For any $x\in L_1(\cM,\tau)_h$, $\lambda(x)$ is defined by $\lambda (s;x) =\inf\{t\in \bR : \tau(e_{(t,\infty)}(x))\le s\}$.
We note that whenever $\tau({\bf 1})=\infty$, $\lambda (x)\ge 0$ for any $x\in L_1(\cM,\tau)_h$.
Otherwise, $0> \lambda (s;x) =\inf\{t\in \bR : \tau(e_{(t,\infty)}(x))\le s\}$ for some $s>0$.
Hence, there exists $s'<0$ such that $\tau(e_{(s',\infty)}(x))\le s$, i.e.,
$\tau(e_{(-\infty ,s']}(x))=\infty $.
 This implies that $x$ is not $\tau$-compact,
 which contradicts  with the assumption that $x\in L_1(\cM,\tau)$.
 In particular,
by the  definition of eigenvalue functions, we obtain that $\lambda (x)=\lambda(x_+)=\mu(x_+)$ for any $x\in L_1(\cM,\tau)_h$ \cite[p.5]{HN1}.
\end{rmk}
\begin{proposition}\label{2.5}If  $\tau({\bf 1})=\infty$.
Let $a,b\in L_1(\cM,\tau)_h $.
We have
$$ \int_0^t \lambda (s;(a+b)_+  )ds   \le \int_0^t \lambda (s;a_+ )ds +\int_0^t \lambda (s;b_+  )ds  $$
and $$ \int_0^t \lambda (s;(a+b)_- )ds   \le \int_0^t \lambda (s;a_- )ds +\int_0^t \lambda (s;b_-  )ds  . $$
In particular, if $a,b\prec c \in L_1(\cM,\tau)_h $, then
$\frac{a+b}{2}\prec c $.
\end{proposition}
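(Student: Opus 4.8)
The plan is to derive both displayed inequalities from the variational identity of Proposition~\ref{HN1}, which is exactly where the hypothesis $\tau(\mathbf 1)=\infty$ enters. By Remark~\ref{alwayspositive}, $\lambda(x)=\lambda(x_+)=\mu(x_+)$ for every $x\in L_1(\cM,\tau)_h$, so Proposition~\ref{HN1} may be restated, for all $t\in(0,\infty)$, as
$$\int_0^t\lambda(s;x)\,ds=\sup\bigl\{\tau(xe):e\in\cM,\ 0\le e\le\mathbf 1,\ \tau(e)=t\bigr\}.$$
First I would prove the estimate for $(a+b)_+$: fixing $t>0$ and an arbitrary positive contraction $e\in\cM$ with $\tau(e)=t$, linearity of $\tau$ on $L_1(\cM,\tau)$ gives $\tau((a+b)e)=\tau(ae)+\tau(be)$, and each term is at most the corresponding supremum above, so $\tau((a+b)e)\le\int_0^t\lambda(s;a)\,ds+\int_0^t\lambda(s;b)\,ds$; taking the supremum over $e$ and applying the identity with $x=a+b$ yields $\int_0^t\lambda(s;(a+b)_+)\,ds\le\int_0^t\lambda(s;a_+)\,ds+\int_0^t\lambda(s;b_+)\,ds$ (the case $t=0$ is trivial). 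The estimate for $(a+b)_-$ then follows by applying this to $-a,-b$ in place of $a,b$, since $(-a)_+=a_-$, $(-b)_+=b_-$, and $(-(a+b))_+=(a+b)_-$.

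For the ``in particular'' clause, assume $a,b\prec c$, so that $a_\pm\prec\prec c_\pm$, $b_\pm\prec\prec c_\pm$ and $\tau(a)=\tau(b)=\tau(c)$. Using positive homogeneity of $\lambda$ together with the two inequalities above, for every $t>0$,
$$\int_0^t\lambda\bigl(s;(\tfrac{a+b}{2})_+\bigr)\,ds=\tfrac12\int_0^t\lambda(s;(a+b)_+)\,ds\le\tfrac12\int_0^t\lambda(s;a_+)\,ds+\tfrac12\int_0^t\lambda(s;b_+)\,ds\le\int_0^t\lambda(s;c_+)\,ds,$$
which, since $\lambda(x)=\mu(x_+)$ here and $c_+\ge 0$, shows $(\tfrac{a+b}{2})_+\prec\prec c_+$; the same computation with reversed signs gives $(\tfrac{a+b}{2})_-\prec\prec c_-$. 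Since $\tau(\tfrac{a+b}{2})=\tfrac12(\tau(a)+\tau(b))=\tau(c)$, Definition~\ref{definfi} then yields $\tfrac{a+b}{2}\prec c$.

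There is no genuine obstacle; the only points that deserve attention are the applicability of Proposition~\ref{HN1} for all $t\in(0,\infty)$ — which holds precisely because $0<t<\tau(\mathbf 1)$ is automatic when $\tau(\mathbf 1)=\infty$ — and the passage between singular-value functions and eigenvalue functions, which is exactly the content of Remark~\ref{alwayspositive}.
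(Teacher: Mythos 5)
Your proof is correct and follows essentially the same route as the paper: the variational formula of Proposition~\ref{HN1} plus linearity of the trace, the identification $\lambda(x)=\mu(x_+)$ from Remark~\ref{alwayspositive}, and a sign flip for the negative parts. The only cosmetic difference is that you take the supremum over positive contractions directly, whereas the paper first reduces to the non-atomic case so as to use projections; both are covered by Proposition~\ref{HN1}.
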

\begin{proof}
Without loss of generality, we may assume that $\cM$ is non-atomic (see e.g.~\cite[Lemma 2.3.18]{LSZ}).

Let $t>0$ be fixed.
For any $e\in \cP(\cM)$ such that $\tau(e)=t$,
by Proposition \ref{HN1},
we have
$$\tau((a+b)e) =\tau(a e )+\tau(b e )\le \int_0^t \lambda (s;a )ds +\int_0^t \lambda (s;b )ds. $$
By Proposition \ref{HN1}, we have
$$\int_0^t \lambda (s;a+b  )ds  \le \int_0^t \lambda (s;a )ds +\int_0^t \lambda (s;b )ds. $$
By  Remark \ref{alwayspositive}, we complete the proof for the first inequality.
The second inequality follows by taking $-a$ and $-b$.
The last assertion is a straightforward consequence of the above results.
\end{proof}

\section{Alberti--Uhlmann problem in the  setting of finite von Neumann algebras}\label{AUF}
We recall that definition of doubly stochastic operators on   von Neumann algebras, which were  introduced  by
Tregub \cite{Tregub81,Tregub86}, Kamei\cite{Kamei} and Hiai \cite{Hiai}.

\begin{definition}\cite{Hiai}
Let $\cA$ and $\cB$ be  semifinite von Neumann algebras equipped with semifinite faithful normal traces $\tau_1$ and $\tau_2$.
A positive linear map $\varphi:\cA\to \cB$ is said to be doubly stochastic if $\varphi({\bf 1}_\cA)={\bf 1}_\cB$ and $\tau_2\circ \varphi =\tau_1$ on $\cA_+$.
\end{definition}

Throughout this section, we always assume  that $\cM$ is a finite von Neumann algebra equipped with a faithful normal tracial state $\tau$.
We denote by $DS(\cM)$ the set of all doubly stochastic operators on $\cM$.
 Using a recent advance in \cite{DHS}, we provide  a complete  resolution of  Alberti--Uhlmann problem in this setting.

Note that  $\varphi$ is $\left\|\cdot \right\|_1$-bounded
on $L_1(\cM,\tau)\cap \cM$.
Hence, it can be extended to a bounded linear map on $L_1(\cM,\tau)$ (denoted by the same $\varphi$) \cite[Section 4]{Hiai}.
When the trace $\tau$ is finite, every doubly  stochastic operator $\varphi:\cM\to \cM$ is normal \cite[Remark 4.2 (2)]{Hiai}, i.e., $\varphi(x_i)\uparrow \varphi(x)$ if $x_i\uparrow x\in \cM_+$.
For any $y\in L_1(\cM,\tau)_h$,
we define
$$\Omega(y):=\{x\in L_1(\cM,\tau)_h: x\prec y \}. $$

The following result is folklore.
Due to the lack of suitable references, we provide a short proof below.
\begin{proposition}\label{propcc} Let $y\in L_1(\cM,\tau)_h$.
Then, $\Omega(y)$ is a convex set which is  closed in $L_1(\cM,\tau)$.
\end{proposition}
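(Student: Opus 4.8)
The plan is to reduce both assertions to the noncommutative triangle inequality \eqref{tri}, $\lambda(x)-\lambda(y)\prec\lambda(x-y)$, together with two elementary observations valid for every $z\in L_1(\cM,\tau)_h$: (i) $\int_0^s\lambda(t;z)\,dt\le\|z\|_1$ for all $s\in[0,1)$, which holds because $z\le z_+$ gives $\lambda(t;z)\le\lambda(t;z_+)=\mu(t;z_+)$ by monotonicity of the spectral scale, whence $\int_0^s\lambda(t;z)\,dt\le\int_0^1\mu(t;z_+)\,dt=\tau(z_+)\le\tau(|z|)=\|z\|_1$; and (ii) $|\tau(z)|\le\|z\|_1$, so that $\tau$ is $\|\cdot\|_1$-continuous. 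No non-atomicity of $\cM$ is needed, since we never invoke the projection form of Proposition \ref{HN1}.

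For closedness, suppose $x_n\in\Omega(y)$ with $\|x_n-x\|_1\to 0$. Since $z\mapsto z^*$ is a $\|\cdot\|_1$-isometry fixing each $x_n$, the limit $x$ is self-adjoint, so $x\in L_1(\cM,\tau)_h$. By (ii), $\tau(x)=\lim_n\tau(x_n)=\tau(y)$. Applying \eqref{tri} to the pairs $(x,x_n)$ and $(x_n,x)$, then (i) to $x-x_n$ and $x_n-x$, gives for every $s\in[0,1)$
\[
\left|\int_0^s\lambda(t;x)\,dt-\int_0^s\lambda(t;x_n)\,dt\right|\le\|x-x_n\|_1 .
\]
As $\int_0^s\lambda(t;x_n)\,dt\le\int_0^s\lambda(t;y)\,dt$ for each $n$, letting $n\to\infty$ yields $\int_0^s\lambda(t;x)\,dt\le\int_0^s\lambda(t;y)\,dt$. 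Combined with $\tau(x)=\tau(y)$ this is precisely $x\prec y$, i.e. $x\in\Omega(y)$; hence $\Omega(y)$ is $\|\cdot\|_1$-closed.

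For convexity, I would first establish midpoint-convexity. Applying \eqref{tri} with the roles of $x,y$ played by $a+b,b$ shows that the truncated functionals $x\mapsto\int_0^s\lambda(t;x)\,dt$ are subadditive; they are also positively homogeneous. Hence, for $a,b\in\Omega(y)$ and every $s\in[0,1)$,
\[
\int_0^s\lambda\!\left(t;\tfrac{a+b}{2}\right)dt\le\tfrac12\int_0^s\lambda(t;a)\,dt+\tfrac12\int_0^s\lambda(t;b)\,dt\le\int_0^s\lambda(t;y)\,dt ,
\]
and $\tau(\tfrac{a+b}{2})=\tfrac12(\tau(a)+\tau(b))=\tau(y)$ by linearity of $\tau$, so $\tfrac{a+b}{2}\in\Omega(y)$. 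Since $\Omega(y)$ is closed by the previous step, midpoint-convexity upgrades to full convexity (iterate to dyadic-rational coefficients and pass to the $\|\cdot\|_1$-limit, using continuity of $\lambda\mapsto\lambda a+(1-\lambda)b$). Alternatively one may simply note that $x\mapsto\int_0^s\lambda(t;x)\,dt$ is a convex functional and realise $\Omega(y)$ as the intersection of its sublevel sets over $s\in[0,1)$ with the affine hyperplane $\{x:\tau(x)=\tau(y)\}$.

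I do not expect a genuine obstacle here; the single point needing care is the $\|\cdot\|_1$-Lipschitz estimate for the truncated spectral integrals, and the whole argument hinges on having \eqref{tri} available in the form $\lambda(x)-\lambda(y)\prec\lambda(x-y)$. Everything else is routine bookkeeping with the standard order, monotonicity, and positive-homogeneity properties of $\lambda(\cdot)$ and $\mu(\cdot)$.
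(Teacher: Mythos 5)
Your proof is correct and follows essentially the same route as the paper: closedness via the Lipschitz estimate $\bigl|\int_0^s\lambda(t;x)\,dt-\int_0^s\lambda(t;x_n)\,dt\bigr|\le\|x-x_n\|_1$ obtained from \eqref{tri}, and convexity via subadditivity plus positive homogeneity of the truncated functionals $x\mapsto\int_0^s\lambda(t;x)\,dt$. The only cosmetic difference is that you derive the subadditivity directly from \eqref{tri} (making the intersection-of-sublevel-sets argument self-contained in the finite-trace setting), whereas the paper cites Proposition \ref{2.5}; the implicit step $\int_0^s g\le\int_0^s\delta_g$ that you use when passing from \eqref{tri} to the displayed estimate is the same Hardy--Littlewood inequality the paper invokes via Proposition \ref{HN1}.
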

\begin{proof}
By \eqref{2.5}, $\Omega(y)$ is convex.
Let $\{x_n\}$ be a sequence in $\Omega (y)$ converging in $L_1$.
We define $x:= \left\|\cdot \right\|_1-\lim_{n\to \infty}x_n$.
Hence, for any $t\in (0,1)$, we have
\begin{align}\label{xxn}
\int_0^t  \lambda (s;x ) -\lambda (s;x_n  )  ds
&\stackrel{Prop. \ref{HN1}}{\le} \int_0^t \lambda\big(s; \lambda (x ) -\lambda (x_n  )  \big) ds
\stackrel{\eqref{tri}}{\le} \int_0^t \lambda  (s; x-x_n) ds  \nonumber \\
&\le \int_0^1 \lambda  \left(s; (x-x_n)_+ \right) ds  +\int_0^1 \lambda  \left(s; (x-x_n)_- \right) ds \nonumber\\
& =
\left\|x-x_n\right\|_1\to 0.
\end{align}
Since $x_n \prec y$, it follows that for any $t\in(0,1)$,
 $$ \int_0^t \lambda (s;y)ds \ge \int_0^t \lambda (s;x_n  ) ds \ge \int_0^t  \lambda (s; x )ds - \left\|x -x_n\right\|_1 \to \int_0^t  \lambda (s;x)ds$$
as $n\to \infty$.
Hence,  $ \int_0^t \lambda (s;y) \ge  \int_0^t  \lambda (s;x)ds$, $t\in (0,1)$.
The same argument with that of \eqref{xxn} infers that
$$
\int_0^1 \lambda (s;x_n) -\lambda (s; x  )  ds   \le
\left\|x-x_n\right\|_1\to 0.
$$
Hence, $\tau(y)=\tau(x_n)\le \tau(x)$, which together with $ \int_0^t \lambda (s;y)ds \ge  \int_0^t  \lambda (s;x)ds$
implies that $x\prec y$.
\end{proof}

\begin{rmk}

 Note that  $\Omega(y)$
 is $\left\|\cdot\right\|_1$-bounded.
  Moreover, for each $\varepsilon>0$,
 there exists a $\delta>0$ (see e.g. \cite[Theorem 3.1]{DPS16}, \cite[Lemma 6.1]{DP12}, \cite{HLS}) such that
 if $e\in \cP(\cM)$ with $\tau(e)<\delta$,
 then for all $x\in \Omega (y)$, we have
 \begin{align*}|\tau( xe )|=|\tau(x_+e)|+|\tau(x_- e)|
 &\stackrel{Prop. \ref{HN1}}{\le} \int_0^\delta \lambda (s;x_+)ds +\int_0^\delta \lambda (s;x_-)ds\\
 &\quad  \le  \int_0^\delta  \lambda (s;y_+) +\lambda (s;y_-)ds <\varepsilon,
 \end{align*}
 which shows that $\Omega(y)$ is relatively weakly compact (see e.g.
 \cite[Corollary 4.5]{DPS16},  see also \cite{Umegaki,RX,Akemann}).
 Since $\Omega(y)$ is a $\left\|\cdot\right\|_1$-closed
convex subset in $L_1(\cM,\tau)$, it follows that $\Omega(y)$ is weakly closed  \cite[Chapter V., Theorem 1.4]{Conway}.
Then, by the Krein--Milman theorem, $\Omega(y)$ is $\sigma(L_1,L_\infty)$-closure of the convex hull of all extreme points on $\Omega(y)$.
Again, by  \cite[Chapter V., Theorem 1.4]{Conway},
$\Omega(y)$ is $L_1$-closure of the convex hull of all extreme points on $\Omega(y)$.
\end{rmk}


The assumption that the operators are positive  plays no role in the proof of \cite[Theorem 4.7. (1)]{Hiai}.
So, the ``only if'' part below follows from the same argument.
Using a result by Day \cite{Day},
we provide below a similar (but simpler)  proof  for completeness.
We note that the special case for finite factors was described in  \cite{HN1} (see also \cite[Theorem 2.18]{Skoufranis}).
The following  extends \cite{Kamei} (see also \cite[Theorem 2.18]{Skoufranis} and \cite[Theorem 4.7. (1)]{Hiai}).
\begin{theorem}\label{sto}
Let $x,y\in L_1(\cM,\tau)$, then $x\prec y $ if and only if there exists a $\varphi \in DS(\cM)$ such that $x=\varphi(y)$.
\end{theorem}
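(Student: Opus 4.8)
The plan is to establish Theorem~\ref{sto} by proving the two implications separately, exploiting the fact that the positivity hypothesis is inessential in Hiai's argument. For the ``if'' direction, suppose $x = \varphi(y)$ with $\varphi \in DS(\cM)$. Since $\varphi$ is trace-preserving, $\tau(x) = \tau(y)$ is immediate. To obtain $x_+ \prec\prec y_+$, I would use Proposition~\ref{treq} together with Proposition~\ref{HN1}: it suffices to control $\tau((x-t)_+)$ for $t \ge 0$. The key observation is that for any convex function $f$ with $f(0) = 0$ (in particular $f(s) = (s-t)_+$), one has $\varphi(f(y)) \ge f(\varphi(y)) = f(x)$ by the operator Jensen-type inequality for doubly stochastic (in particular positive, unital) maps, and then applying the trace-preserving property gives $\tau(f(x)) \le \tau(\varphi(f(y))) = \tau(f(y))$. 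Applied to $f(s) = (s-t)_+$ and $f(s) = (-s-t)_+$ this yields the two submajorization inequalities via Proposition~\ref{treq}, hence $x \prec y$.

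For the ``only if'' direction, suppose $x \prec y$ in $L_1(\cM,\tau)_h$. The strategy is to reduce to the commutative (scalar) case handled by Day \cite{Day} and then conjugate back. First, write the spectral decompositions and note that $\lambda(x) \prec \lambda(y)$ as functions on $(0,1)$ in the classical Hardy--Littlewood--P\'olya sense (this is essentially the definition of $x \prec y$ combined with \eqref{spectral scale}). By Day's theorem there is a doubly stochastic operator $T$ on $L_\infty(0,1)$ with $T\lambda(y) = \lambda(x)$. The plan is then to transport $T$ through the ``diagonalizing'' conditional expectations: choose a maximal abelian subalgebra $\cA_y \subseteq \cM$ containing $y$ together with a trace-preserving isomorphism $\cA_y \cong L_\infty(0,1)$ carrying $y$ to $\lambda(y)$, similarly $\cA_x$ with $x \mapsto \lambda(x)$, let $\cE_x : \cM \to \cA_x$ be the trace-preserving conditional expectation, and set $\varphi = \iota_x \circ T \circ \cE_y$ where $\iota_x$ is the inclusion $\cA_x \hookrightarrow \cM$ precomposed with the isomorphism and $\cE_y$ is (the isomorphic image of) restriction to $\cA_y$. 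One checks $\varphi$ is positive, unital, and trace-preserving, i.e. $\varphi \in DS(\cM)$, and $\varphi(y) = x$ by construction.

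The main obstacle is the ``only if'' direction, specifically making the reduction to Day's commutative result rigorous: one must ensure that the conditional expectation $\cE_y$ onto the abelian algebra generated by $y$ sends $y$ to the correct rearrangement and that composing with Day's doubly stochastic map on $L_\infty(0,1)$ and then embedding back into $\cM$ genuinely produces a map that is normal, positive, unital and trace-preserving as a map $\cM \to \cM$ --- the subtlety being that $T$ acts on functions, so one needs the target abelian algebra $\cA_x$ to actually contain $x$, which forces a careful choice of the diagonalizing isomorphism and uses that $\cM$ is finite (so $\cE_x$ exists and $L_1(\cM,\tau) \cap \cM$ is norm-dense and $\varphi$ extends to $L_1$). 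An alternative, cleaner route for this direction that I would consider is to mimic the proof of \cite[Theorem 4.7 (1)]{Hiai} verbatim, observing that it never uses positivity of $x$ or $y$: Hiai's argument builds $\varphi$ as a limit (in the BW-topology, available since $\tau$ is finite) of convex combinations of maps of the form $a \mapsto u a u^*$ truncated appropriately, and the only input is $\lambda(x) \prec \lambda(y)$, which holds here by hypothesis.

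Finally, I would note that combining Theorem~\ref{sto} with the already-recorded equivalences (Proposition~\ref{treq}, and the description of extreme points of $\Omega(y)$ from \cite{DHS}) completes the finite-algebra case of Alberti--Uhlmann, i.e. the equivalence of (a) and (b) of Theorem~\ref{AU} for $L_1$ of a finite von Neumann algebra, which is what Section~\ref{AUF} claims to deliver.
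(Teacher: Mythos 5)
Your architecture for the direction $x\prec y\Rightarrow x=\varphi(y)$ coincides with the paper's: the paper applies Day's theorem to the abelian subalgebras generated by the spectral projections of $y$ and of $x$, obtains $\psi$ with $\psi(y)=x$, and sets $\varphi=\psi\circ\cE$ with $\cE$ the trace-preserving conditional expectation onto the first subalgebra. However, your specific instantiation has a gap: you insist on trace-preserving isomorphisms of $\cA_y$ and $\cA_x$ with $L_\infty(0,1)$ carrying $y$ to $\lambda(y)$ and $x$ to $\lambda(x)$. Such isomorphisms do not exist when $\cM$ has atoms (for a matrix algebra the algebra generated by the spectral projections of $y$ is finite dimensional), so the ``careful choice of the diagonalizing isomorphism'' you defer to cannot always be made. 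The fix is exactly what the paper does: Day's theorem \cite[Theorem 4.9]{Day} holds for arbitrary finite measure spaces, so one applies it directly to $(\cA_y,\tau)$ and $(\cA_x,\tau)$, with no detour through $L_\infty(0,1)$. (Your fallback of rerunning the proof of \cite[Theorem 4.7 (1)]{Hiai} verbatim is explicitly acknowledged by the paper as a valid alternative.)

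The converse direction contains a genuine error as written. You invoke the ``operator Jensen-type inequality'' $f(\varphi(y))\le\varphi(f(y))$ for $f(s)=(s-t)_+$. That inequality is valid for \emph{operator convex} $f$, but $(s-t)_+$ is not operator convex (otherwise $|s|=2(s)_+-s$ would be), and for a general positive unital map the operator inequality fails: from $\varphi((y-t)_+)\ge\varphi(y)-t$ and $\varphi((y-t)_+)\ge0$ one cannot conclude $\varphi((y-t)_+)\ge(\varphi(y)-t)_+$, because the operator order is not a lattice. The trace inequality you actually need is nevertheless true; the correct derivation is the compression argument of Proposition \ref{majtraceequ}: with $p$ the support projection of $(\varphi(y)-t)_+$ one has $\tau((\varphi(y)-t)_+)=\tau(p(\varphi(y)-t)p)\le\tau(p\varphi((y-t)_+)p)\le\tau(\varphi((y-t)_+))=\tau((y-t)_+)$. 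The paper's own proof of this direction sidesteps the issue differently: it truncates $y$ to bounded $y_n\prec y$, applies \cite[Theorem 4.5 (2)]{Hiai} to the bounded positive operators $y_n+n$ to get $\varphi(y_n)\prec y_n\prec y$, and passes to the limit using the $\|\cdot\|_1$-closedness of $\Omega(y)$ (Proposition \ref{propcc}). Either repair works, but as stated your Jensen step is not justified.
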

\begin{proof}
$\Rightarrow$.
Let $\cA$ (resp. $\cB$) be the commutative von Neumann subalgebra of $\cM$ generated by all spectral projections of $y$ (resp. $x$).
Since $(\cA,\tau)$ and $(\cB,\tau)$ are two normalized measure spaces, by \cite[Theorem 4.9]{Day}, there exists  a doubly stochastic operator $\psi: \cA\to \cB  $ such that $ x=\psi(y )$.
Taking the conditional expectation $\cE :\cM \to \cA$ \cite[Proposition 2.1]{DDPS},
we define $\varphi:\cM\to \cM$ by
$\varphi(z) =   \psi \circ \cE $, $z\in \cM$.
 Clearly, $\varphi$ is a doubly stochastic operator and $\varphi(y)=x$.

$\Leftarrow$.
Assume that there exists  a $\varphi \in DS(\cM)$ such that $x=\varphi(y)$.
By the definition of doubly stochastic operators, we have $\tau(x)=\tau(y)$.
Let $$y_n : = ye^{y}(-n,\infty)   +  \frac{1}{1- \tau(e^{y}(-n,\infty))} \int_{\tau(e^{y}(-n,\infty))}^{1} \lambda (s;y)ds .$$
In particular, $y_n\prec y$ (see e.g. \cite[p.198]{SZ}).
By \cite[Theorem 4.5 (2)]{Hiai},
we have $\varphi(y_n+n )\prec y_n+n$, i.e., $\varphi(y_n )\prec y_n \prec y $.
By \cite[Proposition 4.1]{Hiai},
we have $$\left\|x- \varphi(y_n)\right\|_1=\left\|\varphi(y-y_n)\right\|_1\le \left\|y-y_n\right\|_1\to 0.$$
It follows from Proposition \ref{propcc} that $x\prec y$.
 \end{proof}

 Combining Theorem \ref{sto}, \cite[Proposition 1.3]{HN2} and \cite[Theorem 1.1]{DHS} (with Krein--Milman theorem), we  present the finite von Neumann algebra version of \cite[Theorem 2.2]{AU},
 which provides a complete answer  to
Alberti and Uhlmann's problem \cite[Chapter 3]{AU} in this setting.
 \begin{theorem}\label{th:f}
 For any $x,y\in L_1(\cM,\tau)_h$, the following conditions are equivalent:
 \begin{enumerate}
   \item[(a).]  $x$ is in the doubly stochastic orbit of $y$;
   \item[(b).]    $x\prec y$;
  \item[(c).]$x$ belongs to the $L_1$-closure of the  convex hull of all elements $x\in L_1(\cM,\tau)$ satisfying that for any $t\in  (0,1)$, one of the following options holds:
       \begin{enumerate}
  \item[(i).]   $\lambda(t;x)= \lambda(t;y)$;
  \item [(ii).]
   $\lambda(t;x) \ne \lambda(t;y)$ with  the spectral projection $ E^x \{\lambda (t;x)  \} $ being an atom in $\cM$ and $$\int_{ \{s;\lambda (s;x)=\lambda (t;x)\}}  \lambda (s;y)ds    =  \lambda(t ;x)     \tau(E^x (\{\lambda (t;x)\}))  .  $$
\end{enumerate}
  \item[(d).]$x$ belongs to the $L_1$-closure of the  convex hull of all elements $x\in L_1(\cA,\tau)$ satisfying that for any $t\in  (0,1)$, one of the following options holds:
       \begin{enumerate}
  \item[(i).]   $\lambda(t;x)=  \lambda(t;y)$;
  \item [(ii).]
   $\lambda(t;x) \ne \lambda(t;y)$ with  the spectral projection $ E^x \{\lambda (t;x)  \} $ being an atom in $\cA$ and $$\int_{ \{s;\lambda (s;x)=\lambda (t;x)\}}  \lambda (s;y)ds    =  \lambda(t ;x)     \tau(E^x (\{\lambda (t;x)\}))  .  $$
\end{enumerate}
Here, $\cA$ is an abelian von Neumann subalgebra of $\cM$ such that $x\in L_1(\cA,\tau)$\footnote{
Since $x\prec y$ is equivalent with $x\prec \lambda (y)$ and \cite[Theorem 1.1]{DHS} holds for $x\in L_1(\cA,\tau)$ and $\lambda (y)\in L_1(0,1)$ (see the the beginning of \cite[Section 3]{DHS}), it follows    that the extreme points of $\{z\prec \lambda(y):z\in L_1(\cA,\tau)\}$ are those satisfying conditions (i) and (ii).}.
   \item[(e).] $\tau(x) =\tau(y)$ and $\tau((x-r)_+)\le \tau((y-r)_+)$ for all $r\in \bR$;
   \item[(f).] $\tau(f(x)) \le \tau(f(y))$ for all convex function $f$ on $\bR$;
   \item[(g).] $\tau(|x-r|)\le \tau(|y-r|)$ for all $r\in \bR$;
   \item[(h).]$ \int_0^t\lambda (s;f(x))ds \le \int_0^t \lambda (s; f(y))ds$ for all convex function $f$ on $\bR$.
 \end{enumerate}
 If, in addition, $\cM$ is a factor, then    $\overline{co \{uyu^*:u\in \cU(\cM)\}}^{\|\cdot\|_1}$ coincides with the doubly stochastic orbit of $y$\cite[Theorem 2.3]{HN1}.
 \end{theorem}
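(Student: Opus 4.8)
The plan is to organise all eight conditions around (b) and to assemble the equivalences from three inputs that are already available: Theorem~\ref{sto} for the doubly stochastic description, the characterisation of the extreme points of $\Omega(y)$ together with the Krein--Milman theorem for the convex-hull descriptions, and Proposition~\ref{treq} together with elementary one-variable convexity for the conditions phrased via convex functions. So (a)$\Leftrightarrow$(b) is nothing but Theorem~\ref{sto}, and the remaining work is the passage from (b) to each of (c)--(h).

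For (b)$\Leftrightarrow$(c), I would first recall from the remark following Proposition~\ref{propcc} that $\Omega(y)$ is a convex, $\|\cdot\|_1$-closed, relatively weakly compact subset of $L_1(\cM,\tau)$, so by the Krein--Milman theorem it equals the $\|\cdot\|_1$-closure of the convex hull of its extreme points; by \cite[Theorem~1.1]{DHS} (in the finite von Neumann algebra setting) these extreme points are exactly the $x\in L_1(\cM,\tau)_h$ satisfying, for each $t\in(0,1)$, option (i) or option (ii) relative to $\cM$. Since every such point lies in $\Omega(y)$ and $\Omega(y)$ is closed and convex, the set described in (c) coincides with $\Omega(y)$, i.e.\ with $\{x:x\prec y\}$. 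For (b)$\Leftrightarrow$(d) I would run the same argument inside an abelian subalgebra: using $x\prec y\Leftrightarrow x\prec\lambda(y)$ and $\lambda(y)\in L_1(0,1)$, take $\cA$ to be the abelian von Neumann subalgebra generated by the spectral projections of $x$, apply \cite[Theorem~1.1]{DHS} to the convex, $\|\cdot\|_1$-closed, weakly compact set $\{z\in L_1(\cA,\tau):z\prec\lambda(y)\}\subseteq\Omega(y)$ to identify its extreme points with the elements of $L_1(\cA,\tau)$ satisfying (i)/(ii) relative to $\cA$, and invoke Krein--Milman once more; the reverse inclusion is automatic since $\Omega(y)$ is closed and convex.

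For the convex-function block I would prove (b)$\Leftrightarrow$(e)$\Leftrightarrow$(f)$\Leftrightarrow$(g)$\Leftrightarrow$(h). The equivalence (b)$\Leftrightarrow$(e) is Proposition~\ref{treq} together with the remark after it, which notes that for a finite trace only the single family $\{(\cdot-r)_+:r\in\bR\}$ (plus $\tau(x)=\tau(y)$) is needed. For (e)$\Rightarrow$(f) I would use the standard integral representation of a convex function on $\bR$ as an affine function plus a nonnegative superposition of the maps $t\mapsto(t-r)_+$ and $t\mapsto(r-t)_+$; since $\tau(\mathbf 1)=1$ gives $\tau(ax+b)=\tau(ay+b)$ and $(r-t)_+=(t-r)_++(r-t)$, monotone convergence reduces $\tau(f(x))\le\tau(f(y))$ to the inequalities of (e), while (f)$\Rightarrow$(e) is immediate from $f(t)=(t-r)_+$ and $f(t)=\pm t$. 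For (f)$\Leftrightarrow$(g) one uses $|t-r|=2(t-r)_+-(t-r)$: (g) is a special case of (f), and conversely the limits as $r\to\pm\infty$ of $\tau(|x-r|)-\tau(|y-r|)$ equal $\pm(\tau(y)-\tau(x))$ because $\tau((\cdot-r)_+)\to0$ on $L_1(\cM,\tau)$, which forces $\tau(x)=\tau(y)$ and turns (g) into (e). Finally (h)$\Rightarrow$(f) by letting $t\to1$ and using the spectral scale identity \eqref{spectral scale} in the form $\int_0^1\lambda(s;f(x))\,ds=\tau(f(x))$, and (b)$\Rightarrow$(h) by writing $x=\varphi(y)$ with $\varphi\in DS(\cM)$ and applying a trace Jensen/submajorization inequality of the shape $f(\varphi(y))\prec\prec\varphi(f(y))\prec\prec f(y)$; all of these convex-function characterisations are collected in \cite[Proposition~1.3]{HN2}. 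For the last sentence, when $\cM$ is a factor \cite[Theorem~2.3]{HN1} identifies $\overline{\co\{uyu^*:u\in\cU(\cM)\}}^{\,\|\cdot\|_1}$ with $\Omega(y)$, so combining this with (a)$\Leftrightarrow$(b) completes the statement.

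The step I expect to demand the most care is not any single implication but the bookkeeping that links \cite[Theorem~1.1]{DHS} — and the reduction $x\prec y\Leftrightarrow x\prec\lambda(y)$ — to the precise shapes of (c) and (d): one must check that passing between an abelian subalgebra $\cA$ and all of $\cM$ neither creates nor removes the extreme points relevant to a fixed $x$, and that the atomicity condition in (ii) transfers correctly between $\cA$ and $\cM$, so that the two $\|\cdot\|_1$-closed convex hulls genuinely coincide. The remaining ingredients are either direct citations (Theorem~\ref{sto}, \cite[Proposition~1.3]{HN2}, \cite[Theorem~2.3]{HN1}) or routine one-variable convexity estimates, where the only nuisance is controlling the tails at $\pm\infty$ and the possibility that $\tau(f(x))=+\infty$.
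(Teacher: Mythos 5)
Your proposal is correct and follows essentially the same route as the paper, which likewise obtains the theorem by combining Theorem~\ref{sto} for (a)$\Leftrightarrow$(b), the weak compactness of $\Omega(y)$ with the Krein--Milman theorem and \cite[Theorem 1.1]{DHS} for (c) and (d), and \cite[Proposition 1.3]{HN2} (together with Proposition~\ref{treq} and its remark) for (e)--(h). The paper presents this only as a citation-level assembly, so your more detailed treatment of the convex-function block and of the $\cA$-versus-$\cM$ bookkeeping is a faithful expansion rather than a different argument.
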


\section{On Hiai's conjecture: Ryff's problem in the infinite setting}\label{s:h}

Throughout this section, we always assume that $L_\infty (0,\infty)$ is the space of all (classes of) bounded real  Lebesgue measurable function on $(0,\infty)$.
  Let $x,y\in L_1(0,\infty)$.
 Recall that   $x$ is said to be majorized by $y$ (denoted by $x\prec y$) in the sense of Hardy--Littlewood--P\'{o}lya  if
  $x_+\prec \prec y_+$, $x_-\prec \prec y_-$ and $\int_{\bR_+}x dt=\int_{\bR_+}y dt $.

Following Birkhoff's result \cite{Birkhoff46} and his problem 111 \cite{Birkhoff48},
 numerous results on  doubly stochastic operators in the infinite setting appeared  (see e.g. \cite{Kendall,Isbell,Isbell55,Revesz,RP,Komiya,ST,Hiai,Zanin, Safarov05,Safarov06,PR} and references therein).
Hiai's proof of Theorem 4.7 in \cite{Hiai} relies on the so-called BW-compactness \cite{Arveson} of $DS(\cM)$.
We present several examples below showing that Hiai's conjecture (stated in the introduction) is true in the setting of $\sigma$-finite infinite von Neumann alagebras.

Recall that any doubly stochastic operator on $\cM$ can be canonically extended to a linear map on $L_1(\cM,\tau)+\cM$ \cite[Section 4]{Hiai}.
Hence, by a doubly stochastic operator $\varphi$ on $\cM$, we always mean that $\varphi$ acts on $L_1(\cM,\tau)+\cM$.

The first example concerning on  normal  doubly stochastic operators (for a doubly stochastic operator $\varphi$ on a von Neumann algebra $\cM$, $\varphi$ is said to be normal if $\varphi(x_i)\uparrow \varphi(x)$ if $x_i\uparrow x\in \cM_+$).
 Hiai\cite{Hiai} showed that a doubly stochastic operator on a finite von Neuman algebra is necessarily normal and he gave an example of doubly stochastic operator on an infinite von Neumann algebra, which is not normal.
We show that one cannot expect that for any $x\prec y \in L_1(0,\infty)$, there exists a normal doubly stochastic operator such that $\varphi(y)=x$.
\begin{example}
There exists $x\prec y \in L_1(0,\infty)$ such that there exists no normal doubly stochastic operator $\varphi$ such that $\varphi(y)=x$.
Let $y=\mu(y)$ be an arbitrary strictly positive integrable function on $(0,\infty)$.
We define $x$ by $x(t) =y(t-1)$, $t\ge 1$, and $x(t)=0$, $0\le t<1$.
Clearly, $x\prec y$.
Assume that there exists a  normal doubly stochastic operator $\varphi:L_\infty (0,\infty)\to L_\infty (0,\infty )$ such that (the extension of $\varphi$ on $L_1(0,\infty)$) $\varphi(y)=x$.
Then, for any indicator function $ \chi_{(0, n]}$, $n\ge 0$, we have
$$\mu(n;y)\varphi(\chi_{(0, n]}) \le  \varphi(y). $$
In particular,
$\varphi(\chi_{(0, n]})= \varphi(\chi_{(0, n]}) \chi_{[1, \infty ) } $.
Note that
$\varphi(\chi_{(0, n]})\le \varphi(\chi_{\bR_+}) \le \chi_{\bR_+}$.
We obtain that
$$ \varphi(\chi_{(0, n]}) \le  \chi_{[1, \infty ) }  . $$
Since $\chi_{(0,n]}\uparrow \chi_{\bR_+}$, it follows that
$\chi_{\bR_+}=\varphi(\chi_{\bR_+})=\sup \varphi(\chi_{(0, n]}) \le  \chi_{[1, \infty ) }  $,
which is a contradiction.
\end{example}
The second example shows that
if one consider $L_1(\cM,\tau)+\cM$ (or simply $\cM$), then there exists $x\prec y$ such that there exists no doubly stochastic operator $\varphi$ such that $\varphi(y)=x$.
\begin{example}
Let $y:=\chi_{\bR_+}$ and $x:=\chi_{(1,\infty)}$.
Clearly, $x\prec y$.
Assume that $x=\varphi(y)$ for some doubly stochastic operator $\varphi$.
Then, $\chi_{\bR_+}= \varphi(y) = x =\chi_{(1,\infty)}$,
which is a contradiction.
\end{example}

The following theorem shows that even if we restrict $x,y$ in $L_1(0,\infty )$ and relax the restriction (i.e. normality) on $\varphi$, there  are still element $x$ with $x\prec y $,  which are not in the doubly stochastic orbit of $y$, which again confirms Hiai's conjecture.
In Lemma~\ref{atomnotequ}, we will construct a similar but much more complicated example in a more general setting.
\begin{example}\label{th:counter}
Let $\cM=\l_\infty$ equipped with the standard trace.
There exist $x\prec y\in \ell_1 $ such that there exists no doubly stochastic operator $\varphi:l_\infty \to l_\infty $ with $\varphi(y)=x$.

Let $y: =(0,1,\frac{1}{2},\cdots,\frac{1}{2^n},\cdots)$ and $x:=(1,\frac{1}{2},\cdots,\frac{1}{2^n},\cdots)$.
Assume that there exists a doubly stochastic operator $\varphi$ on $l_\infty$ such that $\varphi(y)=x$.
Since every doubly stochastic  operator is bounded on $\ell_1 $ (with respect to the $L_1$-norm), it follows that
the restriction of $\varphi$ on $\ell_1$ can be represented as an infinite matrix $(a_{ij})$ (see e.g. \cite[Example 4.13]{deAK}).
Since $\varphi$ preserves the trace, it follows that $\sum _{i=1}^\infty a_{ij} =1$ for every $j$.
Moreover, since $\varphi( \sum_{i=1}^n e_n) \le \varphi(\bf 1)={\bf 1}$ and $\varphi$ is positive, it follows that $\sum_{j=1}^n a_{ij} \le 1$ for every $n$ and $a_{ij}\ge 0$ for any $1\le i,j<\infty $.

By $\varphi (y)=x$, we obtain that $\sum_{j=1}^\infty  a_{ij}y_{j} = x_i $, i.e., $\sum_{j=2}^\infty  a_{ij}\frac{1}{2^{j-2}} = \frac{1}{2^{i-1}  }$.
When $i=1$,
we have
$$\sum_{j=2}^\infty  a_{1j}\frac{1}{2^{j-2 }} =1. $$
Since  $\sum_{j=1}^n a_{1j} \le 1$, it follows that $a_{12}=1$ and $a_{1j}=0$ whenever $j\ne 2$.
Recall that
$\sum_{i=1}^\infty a_{i2}=1$.
We obtain that $a_{i2}=0$ whenever $i\ne 1$.
Arguing inductively, we show that
$$a_{n,n+1}=1, ~n\ge 1$$
and $$a_{i,j}=0, ~i+1\ne j.$$
In particular,
$a_{i1}=0$ for any $i\ge 1$.
Hence,
$\varphi(e_1) =0$,
which contradicts with the assumption that $\varphi$ preserves the trace.
\end{example}
\begin{rmk}
It is shown in \cite[Corollary 6.1,  S(ii $'$)]{KW} that if $x=\mu(x),y=\mu(y)\in \ell_1$ such that $x\prec  y$, then there exists a doubly stochastic operator $\varphi$ such that $x= \varphi(y)$ (see also \cite[Corollary 2]{Komiya} and \cite[Theorems 4 and 5]{ST} for similar results).
\end{rmk}

We show below that for any $\sigma$-finite infinite von Neumann algebra, there always exists $y$ such that the doubly stochastic orbit of $y$ does not  coincide with the orbit of $y$ in the sense of Hardy--Littlewodd--P\'{o}lya.
Before proceeding, we need  the following generalization of Example \ref{th:counter}.
The key instrument of the proof below is the description of extreme points of $\Omega(y)$ in the setting when   atoms in  $l_\infty$ have  different traces  (see \cite{DHS}, see also Theorem \ref{th:ext}).
\begin{lemma} \label{atomnotequ}
Let $\cM=\l_\infty$ equipped with a semifinite faithful normal  trace $\tau$ such that $\tau(e_1)< \tau(e_2)< \cdots < \tau(e_n) <  \cdots $,
where $e_1,e_2,\cdots$ are the unit elements of $l_\infty$.
There exist $x\prec y\in \ell_1 $ such that there exists no doubly stochastic operator $\varphi:l_\infty \to l_\infty $ with $\varphi(y)=x$.
\end{lemma}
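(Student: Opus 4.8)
The plan is to adapt the construction in Example \ref{th:counter} to the setting of unequal atom weights, using the description of extreme points of $\Omega(y)$ (Theorem \ref{th:ext}, from \cite{DHS}) as the main tool. Let $w_n := \tau(e_n)$, so that $0 < w_1 < w_2 < \cdots$. First I would pick a strictly positive, strictly decreasing sequence $y = (y_1, y_2, \ldots) \in \ell_1$ (with respect to the weighted trace $\tau$) that is already in decreasing rearrangement order, i.e. $\mu(y) = \lambda(y)$ is the corresponding step function, and then shift its ``mass'' to the right one atom at a time: define $x$ to be the element whose restriction to the first atom is $0$ and which reproduces, in a weight-compatible way, the profile of $y$ on the atoms $e_2, e_3, \ldots$. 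The point is to choose these values so that $x \prec y$ holds (this will be a direct check of the two submajorization inequalities together with $\tau(x) = \tau(y)$, using that $y$ is already decreasing), while at the same time $x$ is an \emph{extreme point} of $\Omega(y)$ that is \emph{not} a value-rearrangement of $y$ in the atomic sense required by Theorem \ref{th:ext}(ii). Concretely, for each atom $e_n$ one wants option (i) or (ii) of Theorem \ref{th:ext} to hold; by engineering the shift so that at no atom does $\lambda(t;x_+) = \lambda(t;y_+)$, one forces option (ii) everywhere, i.e. each relevant level-integral of $\lambda(y)$ must exactly equal $\lambda(t;x)\tau(E^x\{\lambda(t;x)\})$. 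The unequal weights $w_n$ are precisely what make this forced option-(ii) constraint consistent, so $x$ is genuinely an extreme point.

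Having produced such an extreme $x \in \Omega(y)$, the next step is to rule out the existence of a doubly stochastic $\varphi : \ell_\infty \to \ell_\infty$ with $\varphi(y) = x$ by the same matrix-representation argument as in Example \ref{th:counter}. Since $\varphi$ is $\|\cdot\|_1$-bounded on $\ell_1$, its restriction to $\ell_1$ is given by an infinite matrix $(a_{ij})$ with $a_{ij} \ge 0$; the trace-preservation condition now reads $\sum_i a_{ij} w_i = w_j$ for every $j$ (the column sums are weighted), and positivity with $\varphi(\mathbf{1}) \le \mathbf{1}$ gives $\sum_{j \le N} a_{ij} \le 1$ for all $N$, hence $\sum_j a_{ij} \le 1$ for every $i$. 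Writing out $\varphi(y) = x$ coordinatewise, $\sum_j a_{ij} y_j = x_i$, and then running the \emph{same} inductive cascade as in Example \ref{th:counter} — starting from the coordinate where $x$ first differs from $y$ and using strict monotonicity of $y$ to force $a_{ij}$ to be a permutation-type shift — should collapse the matrix to a pure one-step shift, which then contradicts $\sum_i a_{i1} w_i = w_1 > 0$ because the cascade forces $a_{i1} = 0$ for all $i$, i.e. $\varphi(e_1) = 0$.

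The step I expect to be the main obstacle is the first one: choosing $x$ so that it is simultaneously (a) in $\Omega(y)$, (b) an extreme point of $\Omega(y)$, and (c) \emph{not} realizable by a doubly stochastic operator. In Example \ref{th:counter} the weights are all $1$, so the ``shift by one'' construction works transparently; with strictly increasing weights $w_n$ the extremality conditions from Theorem \ref{th:ext} become genuine arithmetic constraints tying together the values of $x$ across several atoms (the level sets of $\lambda(x)$ interact with the integral $\int_{\{s : \lambda(s;x) = \lambda(t;x)\}} \lambda(s;y)\,ds$), and one must verify these can be met while keeping $x$ a bona fide element of $\ell_1$ with the right total trace. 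The cleanest route is probably to keep $y$ \emph{strictly} decreasing so that the level sets of $\lambda(y)$ are singletons and the option-(ii) equation degenerates to a pointwise identity, making the verification of $x \prec y$ and of extremality both essentially bookkeeping; the monotonicity of $y$ is also exactly what powers the inductive cascade in the second half, so a single well-chosen $y$ serves both purposes. Once $x$ is in hand, steps two and three are routine modifications of Example \ref{th:counter}.
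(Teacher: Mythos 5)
Your second step (the matrix representation, the weighted column identities $\sum_i a_{ij}w_i=w_j$, the row bounds $\sum_j a_{ij}\le 1$) is set up exactly as in the paper, but the construction in your first step has the shift going in the wrong direction, and this is fatal rather than cosmetic. You take $y$ strictly positive and put the zero in $x$ (so $x_1=0$ and mass moves \emph{away} from the first atom). Then the only coordinate equation with any forcing power is $\sum_n a_{1,n}y_n=x_1=0$, which (since $y_n>0$) kills the first \emph{row} of the matrix, i.e.\ gives $\varphi(z)_1=0$ for all $z\in\ell_1$; it says nothing about the first \emph{column}, so your claimed contradiction with $\sum_i a_{i1}w_i=w_1>0$ never materialises. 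A vanishing first row only conflicts with $\varphi(\mathbf 1)=\mathbf 1$ when $\varphi$ is \emph{normal}, and the lemma must exclude arbitrary doubly stochastic operators. Worse, your $x$ genuinely lies in the doubly stochastic orbit of $y$: for the natural weight-compatible right shift $x_1=0$, $x_{n+1}=\frac{w_ny_n+(w_{n+1}-w_n)y_{n+1}}{w_{n+1}}$, the map $\varphi(z):=L(z)\,e_1+\sum_{n\ge1}\frac{w_nz_n+(w_{n+1}-w_n)z_{n+1}}{w_{n+1}}\,e_{n+1}$, where $L$ is a Banach limit, is positive, unital and trace-preserving (any $z\ge 0$ of finite trace lies in $c_0$, so $L(z)=0$ and the telescoping weighted sum returns $\tau(z)$), and it satisfies $\varphi(y)=x$. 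So no argument can establish non-existence for this pair; extremality of $x$ in $\Omega(y)$ does not help, since extreme points can perfectly well be attained by (non-normal) doubly stochastic maps.

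The paper reverses the roles: it puts the zero in $y$ (namely $y=(0,\tfrac1{w_2},\tfrac1{2w_3},\dots,\tfrac1{2^{n-2}w_n},\dots)$) and defines $x$ by averaging $\mu(y)$ over consecutive intervals of lengths $w_1,w_2,\dots$, so that $x_1=y_2=\max_n y_n$ and the mass is pushed \emph{into} the first (lightest) atom; the averaging operator is doubly stochastic on $L_\infty(0,\infty)$, which already yields $x\prec y$ without any appeal to the extreme-point theorem. Now the first-row equation reads $\sum_n a_{1,n}y_n=\max_n y_n$ with $\sum_n a_{1,n}\le 1$, which forces $a_{1,2}=1$ and $a_{1,n}=0$ otherwise; the weighted column identities then propagate this down the matrix, and the upshot is that the first \emph{column} vanishes, i.e.\ $\varphi(e_1)=0$, contradicting $\tau(\varphi(e_1))=w_1>0$ for \emph{every} doubly stochastic $\varphi$, normal or not. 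If you interchange the roles of $x$ and $y$ in your construction accordingly, the rest of your cascade goes through as written.
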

\begin{proof}
For the sake of convenience, we denote $w_n=\tau(e_n)$ for any $n\ge 1$.  

Let $$y: =\left(0,\frac{1}{w_2},\frac{1}{2w_3},\cdots,\frac{1}{2^{n-2} w_n},\cdots,\right) \in \ell_1$$
 and
 $$x:=\left(\frac{1}{w_2}, \frac{\frac{1}{w_2}(w_2-w_1) + \frac{w_1}{2w_3}  }{w_2},  \frac{ \frac{1 }{2w_3} (w_3-w_1)  +\frac{w_1}{4w_4}   }{w_3} , \cdots, \frac{ \frac{1 }{2^{n-3}w_{n-1}} (w_{n-1}-w_1)  +\frac{w_1}{2^{n-2}w_n}   }{w_{n-1}}  ,\cdots  \right).$$
 Note that
 $\mu(y)= \sum_{n\ge 2} \frac{1}{2^{n-2}w_n}\chi_{[\sum_{k=2}^{n-1}w_k  , \sum_{k=2}^{n} w_k)}$.
 Define an averaging operator~\cite{SZ,LSZ} $E: L_\infty (0,\infty)\to  L_\infty (0,\infty) $ by
 $$ E(f)= \sum_{n\ge 1}  \frac{1}{w_n} \int_{\sum_{k=1}^{n-1}w_k}^{\sum_{k=1}^{n} w_k} f(s)ds\cdot \chi_{[\sum_{k=1}^{n-1}w_k  , \sum_{k=1}^{n} w_k)}  .$$
 In particular,
 $E(\mu(y))=\mu(x)$.
 Recall that averaging operators are doubly stochastic operators (see e.g. \cite[Lemma 3.6.2]{LSZ}, \cite[p.198]{SZ} and \cite{Zanin}).
 We obtain that  $x\prec y$ (in particular, $x$ is an extreme point of
 $\Omega (y)$, see Theorem \ref{th:ext}).

Assume that there exists a doubly stochastic operator $\varphi$ on $l_\infty$ such that $\varphi(y)=x$.
For every  $n$, we denote
$$\varphi(e_n) = (a_{1,n},a_{2,n}, \cdots , a_{k,n},\cdots). $$
Since $\varphi$ is positive, it follows that $a_{kn}\ge 0$, $k,n\ge 1$.
Since $\varphi$ preserves the trace and $\tau(e_n)=w_n$,
it follows that
$$w_n =  \sum _{k=1}^\infty a_{k,n} w_k    .  $$
Moreover, since $\varphi({\bf 1})={\bf 1}$, it follows that
$$\sum_{n=1}^\infty  a_{k,n} \le 1 $$
for every $k  \ge 1$.

Since the restriction of $\varphi$ on $\ell_1$ is continuous in $\left\|\cdot \right\|_1$, it follows that $\varphi$ is normal on $\ell_1$ (see e.g. \cite[Proposition 2 (iii)]{DP2}).
We may view the restriction of $\varphi$ on $\ell_1$ as an infinite matrix $(a_{k,n})$.

Since $\sum_{n=1}^\infty a_{1,n}\le 1$ and
$$\sum_{n=1}^\infty a_{1,n} y_n =\sum_{n=2}^\infty a_{1,n} y_n =x_1 = y_2>y_n, ~n> 2,$$
it follows that $a_{1,2} =1$ and $a_{1,n}=0$ when $n\ne 2$.
Arguing inductively,
 we obtain that
\begin{equation*}
(a_{k,n}) =
\begin{pmatrix}
0 & 1  &  0 & 0 &\cdots &   \\
0 & \frac{w_2 -w_1}{w_2} & \frac{w_ 1}{w_2} & 0 &\cdots &   \\
0 & 0 &  \frac{w_3 -w_1}{w_3}&  \frac{w_1}{w_3} &\cdots &   \\
\vdots  &  \vdots  & \vdots &\vdots &\ddots
\end{pmatrix}
\end{equation*}
Hence,
$\varphi(e_1) =0$,
which contradicts with the assumption that $\varphi$ preserves the trace.
\end{proof}

Now, we are ready to show that Hiai's conjecture is true for any $\sigma$-finite infinite von Neumann algebra.
\begin{theorem}\label{sigmanot}
Let $\cM$ be a $\sigma$-finite  von Neumann algebra equipped with a semifinite infinite faithful normal trace.
There are $x,y\in L_1(\cM,\tau)$ such that there exists no doubly stochastic operator $\varphi$ on $\cM$ such that $\varphi(y)=x$.
\end{theorem}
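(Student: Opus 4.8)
The plan is to deduce the theorem from Lemma~\ref{atomnotequ}: I will locate inside $\cM$ an abelian corner isomorphic to $\ell_\infty$ whose atoms have pairwise distinct finite traces summing to $\mathbf 1$, and then transport any hypothetical doubly stochastic operator on $\cM$ down to that corner by means of a trace-preserving conditional expectation, where Lemma~\ref{atomnotequ} forbids it.

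\emph{Locating the corner.} Since $\tau$ is semifinite and $\cM$ is $\sigma$-finite, a maximal pairwise orthogonal family of projections of finite nonzero trace has supremum $\mathbf 1$ (by semifiniteness) and is countable (by $\sigma$-finiteness); denote it $\{p_n\}_{n\ge1}$, so that $0<\tau(p_n)<\infty$ and $\sum_n p_n=\mathbf 1$. As $\sum_n\tau(p_n)=\tau(\mathbf 1)=\infty$, one may split $\bN$ into consecutive finite blocks $B_1,B_2,\dots$ with $w_k:=\sum_{n\in B_k}\tau(p_n)$ \emph{strictly} increasing: once $B_1,\dots,B_k$ have been chosen, the still-unused indices carry infinite total trace, so the next block can be taken long enough that $w_{k+1}>w_k$. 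Put $q_k:=\sum_{n\in B_k}p_n$ and let $\cA$ be the von Neumann subalgebra of $\cM$ generated by $\{q_k\}_{k\ge1}$; then $\cA$ is abelian, isomorphic to $\ell_\infty$ with atoms $q_k$, $\sum_k q_k=\mathbf 1$, and $\tau$ restricts on $\cA$ to the trace with weights $w_1<w_2<\cdots$. Because $\mathbf 1=\sum_k q_k$ with each $\tau(q_k)<\infty$, the formula
\[
\cE(z)=\sum_{k\ge1}\frac{\tau(q_k z q_k)}{\tau(q_k)}\,q_k,\qquad z\in\cM,
\]
defines a normal positive unital conditional expectation $\cE:\cM\to\cA$ with $\tau\circ\cE=\tau$ on $\cM_+$ (indeed $\tau(\cE(z))=\sum_k\tau(q_k z q_k)=\sum_k\tau(q_k z)=\tau(z)$ for $z\ge0$), which extends canonically to $L_1(\cM,\tau)$.

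\emph{The reduction.} Let $x\prec y$ be the pair of elements of $L_1(\cA,\tau|_{\cA})\subset L_1(\cM,\tau)$ furnished by Lemma~\ref{atomnotequ} for the weights $w_k$; since the eigenvalue (equivalently, singular value) distribution of a diagonal element is the same whether computed in $\cA$ or in $\cM$, one still has $x\prec y$ in $L_1(\cM,\tau)$. Suppose, towards a contradiction, that some doubly stochastic operator $\varphi$ on $\cM$ (interpreted on $L_1(\cM,\tau)+\cM$) satisfied $\varphi(y)=x$. Then $\psi:=\cE\circ\varphi|_{\cA}:\cA\to\cA$ is positive and unital and satisfies $\tau|_{\cA}\circ\psi=\tau|_{\cA}$ on $\cA_+$, i.e.\ $\psi$ is a doubly stochastic operator on $\cA$; moreover $\psi(y)=\cE(\varphi(y))=\cE(x)=x$ because $x\in\cA$. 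This contradicts Lemma~\ref{atomnotequ} and proves the theorem.

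The only genuinely delicate step, I expect, is the first one: extracting from an arbitrary $\sigma$-finite semifinite von Neumann algebra $\cM$ with infinite trace an abelian corner $\cA\cong\ell_\infty$ whose atoms have \emph{strictly} increasing traces summing to $\mathbf 1$; this is exactly where $\sigma$-finiteness, semifiniteness, and infiniteness of $\tau$ all enter. Once $\cA$ and $\cE$ are in place the remainder is routine bookkeeping about the action of doubly stochastic operators and conditional expectations on $L_1(\cM,\tau)+\cM$, and if one prefers, the existence and properties of the trace-preserving conditional expectation $\cE$ may be quoted from the general theory instead of being written out explicitly.
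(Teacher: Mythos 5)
Your proof is correct and follows essentially the same route as the paper's: build an abelian atomic subalgebra $\cA\cong\ell_\infty$ with atoms of strictly increasing finite traces summing to $\mathbf 1$, take the counterexample pair from Lemma~\ref{atomnotequ} inside $\cA$, and push any hypothetical doubly stochastic $\varphi$ on $\cM$ down to $\cA$ via the trace-preserving conditional expectation to reach a contradiction. The only difference is that you spell out the block-regrouping that makes the traces strictly increasing and write the conditional expectation explicitly, where the paper simply asserts both; this is a welcome clarification rather than a new argument.
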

\begin{proof}
Since $\cM$ is $\sigma$-finite, it follows that there exists a sequence $\{p_n\}_{n\ge 1}$  of disjoint $\tau$-finite projections such that $\vee p_n ={\bf 1}$.
Moreover, we may assume that $\tau(p_n) $ is strictly increasing.
The $\left\|\cdot\right\|_1$-closure of the linear span of $\{p_n\}$ can be viewed as the $\ell_1$-sequence space generated by atoms $\{p_n\}$.
By Lemma \ref{atomnotequ}, there exists $x,y\in \ell_1$ such that there exists no doubly stochastic operator $\varphi:\ell_1\to \ell_1$ with $\varphi(y)=x$.

Assume that there exists a doubly stochastic operator $\varphi:\cM\to \cM$ such that $\varphi(y)=x$.
Let $\cE$ be the conditional expectation from $\cM$ to $\ell_\infty$ (the von Neumann subalgebra of $\cM$ generated by $\{p_n\}$).
Clearly, $\cE\circ \varphi :\ell_\infty \to \ell_\infty$ is a doubly stochastic operator which satisfies that $\cE\circ \varphi  (y) =\cE (x)= x$.
This contradicts with the assumption imposed on  $x$ and $y$.
\end{proof}

\section{A decomposition theorem}

In view of Theorem \ref{sigmanot}, to study Alberti--Uhlmann problem in the infinite setting, we have to relax the restriction on the doubly stochastic operators.
In the next section, we will show that there exists a normal doubly stochastic operator $\varphi$ on a larger algebra such that $\varphi(y)=x$ provided $x\prec y$.
Before proceeding to this result,  we prove a decomposition theorem for Hardy--Littlewood-P\'{o}lay majorization in this section.


Recall a  result due to  Ryff that \cite[Chapter 2, Corollary 7.6]{BS} (see also \cite{R}) for any $0\le f\in L_1(0,\infty)$, there exists a measure-preserving transformation  $\sigma $ (i.e., $m(\sigma^{-1}(E)) = m(E)$ for any measurable set $E$) from the support $s(f)$ of $f$ onto the support of $\mu(f)$ such that
$$f = \mu(f) \circ \sigma .$$
If $f\in L_1(X,\nu)_h$ ($(X,\nu)$ is a finite non-atomic measure space), then
there exists a measure-preserving transformation  $\sigma $   from   $X$ onto $(0,\nu(X))$
 such that \cite{R}
$$f = \lambda (f) \circ \sigma .$$


 The following proposition is the key to extend Theorem \ref{AU} to the infinite setting (see also  \cite[Lemma 3.2]{CSZ},  \cite[Proposition 19]{SZ} and \cite[Lemma 3.2]{BM} for similar results).


   \begin{proposition}\label{partition}
If $x\prec y\in L_1(0,\infty)$, then there exist sequences of disjoint sets $\{A_n\subset \bR_+\oplus \bR_+ \}$ and $\{B_n \subset \bR_+ \oplus \bR_+ \}$
such that $m(A_n)=m(B_n)<\infty$, $\cup A_n =\cup B_n=\bR_+\oplus \bR_+ $  and $(x\oplus 0 )\chi_{B_n}\prec (y\oplus 0)\chi_{A_n}$ (on $\bR_+\oplus \bR_+$) for every $n\ge 0$.
  \end{proposition}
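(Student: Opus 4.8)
The plan is to reduce everything to the positive and negative parts separately and then build the partition by a greedy ``exhaustion'' argument that matches mass of $y$ against mass of $x$ while preserving submajorization on each piece. First I would record the elementary but crucial observation: since $x,y\in L_1(0,\infty)$ with $\tau({\bf 1})=\infty$, by Remark~\ref{alwayspositive} (in the function-space incarnation) we may as well work with $\mu(x)$ and $\mu(y)$ on the positive parts, and the hypothesis $x\prec y$ unpacks as $x_+\prec\prec y_+$, $x_-\prec\prec y_-$ and $\int_{\bR_+}x=\int_{\bR_+}y$. Using Ryff's theorem (the measure-preserving rearrangement quoted just before the statement), there are measure-preserving maps carrying $x$ to $\mu(x)$ (on $\operatorname{supp}$) and $y$ to $\mu(y)$, so it suffices to construct the partition for $\mu(x)\prec\prec\mu(y)$ on $\bR_+$ together with the tail identity, and then transport it back through the rearrangements; the second copy of $\bR_+$ in $\bR_+\oplus\bR_+$ is there precisely to absorb the discrepancy in total mass locally (we are allowed to ``pad'' with zeros), which is what makes the blockwise majorization $(x\oplus 0)\chi_{B_n}\prec (y\oplus 0)\chi_{A_n}$ achievable even though globally $x$ and $y$ need not agree on finite windows.

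The core construction is an inductive choice of finite-measure blocks. I would proceed as follows. Set $a_0=b_0=0$. Having chosen cut-points so that $\int_0^{a_n}\mu(y)\,ds=\int_0^{b_n}\mu(x)\,ds=:c_n$, pick $a_{n+1}>a_n$ and $b_{n+1}>b_n$ with $a_{n+1}-a_n<\infty$, $b_{n+1}-b_n<\infty$, and
\begin{equation*}
\int_0^{a_{n+1}}\mu(y)\,ds=\int_0^{b_{n+1}}\mu(x)\,ds=c_{n+1},
\end{equation*}
where $c_{n+1}$ is chosen large enough that both $a_{n+1},b_{n+1}\to\infty$ (possible because both integrals are unbounded in $t$, being $L_1$-functions with infinite support when nonzero, or one treats the trivial zero case directly). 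Then on the $n$-th block I take $A_n$ to be the graph region over $(a_n,a_{n+1})$ in the first copy of $\bR_+$ supplemented by a sub-rectangle of the second copy of area $\bigl((b_{n+1}-b_n)-(a_{n+1}-a_n)\bigr)_+$, and symmetrically for $B_n$, so that $m(A_n)=m(B_n)=\max\{a_{n+1}-a_n,\,b_{n+1}-b_n\}<\infty$ and the two families exhaust $\bR_+\oplus\bR_+$ up to the second-coordinate complement, which one mops up by an extra countable refinement of the leftover product strips (disjointness and $\bigcup A_n=\bigcup B_n=\bR_+\oplus\bR_+$ then hold by construction). The submajorization $(x\oplus0)\chi_{B_n}\prec\prec(y\oplus0)\chi_{A_n}$ on each block reduces, after discarding the padding zeros, to: for all $t$,
\begin{equation*}
\int_{b_n}^{b_n+t}\mu(x)\,ds\le\int_{a_n}^{a_n+t}\mu(y)\,ds,
\end{equation*}
with equality at the right endpoint by the choice of $c_n,c_{n+1}$; this last inequality is exactly where I would invoke $x\prec\prec y$ globally, via a standard ``cut-and-compare'' lemma for decreasing functions (the increments of $\int_0^t\mu(x)$ over a window ending at total-mass level $c_{n+1}$ are dominated by those of $\int_0^t\mu(y)$ because $\mu(x)$ is ``spread out more'' — concretely, $\mu(x)(b_n+t)\le\mu(x)(b_n)\le$ the average of $\mu(y)$ over the matching $y$-window, iterated). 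Finally, the global equality $\int x=\int y$ is what guarantees that this process genuinely exhausts both copies rather than stalling with leftover mass on one side, and the $\pm$ decomposition lets me run the same construction on $x_-\prec\prec y_-$ and then concatenate.

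The main obstacle I anticipate is the blockwise submajorization inequality above: it is not literally monotone-increment bookkeeping because one must simultaneously control the left tail of each window (to keep $\prec\prec$ from the bottom of the block) and the right endpoint (to get equality), and the natural greedy choice of $c_{n+1}$ has to be made compatibly for both. I expect the clean way around this is to first prove an auxiliary lemma — essentially a ``localization of submajorization'' statement: if $f\prec\prec g$ are decreasing on $\bR_+$ and $\int_0^{\alpha}g=\int_0^{\beta}f$, then $\chi_{(\beta,\infty)}f\prec\prec\chi_{(\alpha,\infty)}g$ after left-translation, with the total-mass equality passing to the tails — and then apply it repeatedly. Such a lemma follows from the integral characterization of $\prec\prec$ plus the elementary fact that for decreasing $g$, $\int_0^{\alpha}g$ is concave in $\alpha$, so the ``remaining mass'' profiles can be compared. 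Once that lemma is in hand (and it may already be citable from \cite{CSZ} or \cite{SZ} as the parenthetical note suggests), the rest is the routine inductive assembly sketched above.
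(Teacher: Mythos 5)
Your construction has a genuine gap at exactly the point you flag as the ``main obstacle'': the auxiliary localization lemma you hope for is false, and with it the blockwise inequality $\int_{b_n}^{b_n+t}\mu(x)\,ds\le\int_{a_n}^{a_n+t}\mu(y)\,ds$. Take $\mu(y)(s)=\tfrac{1}{2\sqrt s}\,\chi_{(0,4)}(s)$ and $\mu(x)=\tfrac12\chi_{(0,4)}$; then $\int_0^s\mu(x)=s/2\le\sqrt s=\int_0^s\mu(y)$ for $s\le4$, with equal total masses, so $x\prec y$ (both positive). Cutting at the mass level $c=1$ gives $a_1=1$, $b_1=2$, and at $t=1$ one has $\int_2^3\mu(x)=\tfrac12>\sqrt2-1=\int_1^2\mu(y)$, so the second block violates submajorization. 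Worse, no cut level repairs this example: writing $F(s)=s/2$, $G(s)=\sqrt s$, the required $F(2c+t)\le G(c^2+t)$ reduces to $t\le4(1-c)$, which fails on the nonempty range $t\in(4(1-c),\,4-2c]$ for every $c\in(0,2)$. The failure is structural: your cut at $b_1=2$ severs part of the region $\{\mu(x)>\mu(y)\}=(1,4)$ from the region $\{\mu(y)>\mu(x)\}=(0,1)$ that compensates for it. The paper's decomposition is governed by precisely this sign pattern rather than by cumulative-mass levels: it partitions $\{\mu(y_+)<\mu(x_+)\}$ into finite-measure pieces and pairs each with a piece of $\{\mu(y_+)>\mu(x_+)\}$ chosen so that $\int(\mu(y_+)-\mu(x_+))=0$ over the union, which is what keeps the compensating mass inside the same block.

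There is a second gap: running the construction separately on $x_+\prec\prec y_+$ and $x_-\prec\prec y_-$ and then ``concatenating'' cannot produce blocks on which $\prec$ (rather than merely $\prec\prec$) holds, because in general $\tau(x_+)<\tau(y_+)$. Take $y=\chi_{(0,1)}-\chi_{(1,2)}$ and $x=0$; then $x\prec y$, but any block $A_n$ meeting only the support of $y_+$ has $\int_{A_n}y>0=\int_{B_n}x$, so the per-block trace equality required for $(x\oplus0)\chi_{B_n}\prec(y\oplus0)\chi_{A_n}$ fails. The surplus $\tau(y_+)-\tau(x_+)$ must be cross-paired against the equal surplus $\tau(y_-)-\tau(x_-)$ inside common blocks; this is the role of the leftover sets $A_0$, $B_0$ and their matched partitions $\{\cX_k^0\}$, $\{\cY_k^0\}$ in the paper's proof, and it is the place where the hypothesis $\tau(x)=\tau(y)$ is actually used. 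Your remark that this equality ``guarantees the process exhausts both copies'' does not address the issue, since the problem is not exhaustion but the fact that individual blocks must mix the positive and negative supports.
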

 \begin{proof}
Let
$$\sigma_+^x:s(x_+)\to (0, m(s(x_+)) ),~ \sigma_+^y:s(y_+)\to (0, m(s(y_+)) ),$$
$$\sigma_-^x:s(x_- )\to (0, m(s(x_-)) ),~\sigma _-^y:s(y_-)\to (0, m(s(y_-)) ) $$
be measure-preserving transformations such that
$$x_+ =\lambda(x_+)\circ \sigma_+^x,~ x_- =\lambda(x_-)\circ \sigma_-^x, ~ y_+ =\lambda(y_+)\circ \sigma_+^y, ~y_- =\lambda(y_-)\circ \sigma_-^y.$$


Define
$$X_1 :=\{ s: \mu(s; y_+)>\mu(s;x_+) \},$$
$$X_2 :=\{ s: \mu(s; y_+)<\mu(s;x_+) \},$$
$$X_3 :=\{ s: \mu(s; y_+)=\mu(s;x_+) \}.$$
Let
 $\{\Gamma_k^2:  m(\Gamma_k^2)<\infty \}_{k\ge 1}$
be a partition  of $X_2$.
Since $\tau(y_+)\ge \tau(x_+)$,
it follows that there exists a collection  $\{\Gamma_k^1: m(\Gamma_k^1)<\infty \}_{k\ge 1}$ of disjoint subsets of $X_1$ such that
$$\int_{\Gamma_k^1 \cup \Gamma_k^2} \mu(y_+)- \mu(x_+)=0.$$
We denote by $\Gamma_0^1 := X_1\setminus (\cup_{k\ge 1} \Gamma_k^1)$ (it may be the empty set).
We also define
 a partition  $\{\Gamma_k^3:  m(\Gamma_k^3)<\infty \}_{k\ge 1}$ of $X_3$.
 For brevity, we numerate $\{\Gamma_k^1 \cup \Gamma_k^2\}_{k\ge 1} \cup \{\Gamma_k^3\}_{k\ge 1}$ as $\{\Delta_{k}\}_{k\ge 1}$ and
let $A_0:=\Gamma_0^1 $.

Similarly,
we construct
a partition  $\{\Omega_k^1: m(\Omega_k^1)<\infty \}_{k\ge 1}$ of $\{ s: \mu(s; y_-)>\mu(s;x_-) \}$ and a decomposition $\{\Omega_k^2: m(\Omega_k^2)<\infty \}_{k\ge 1}$ of $\{ s: \mu(s; y_-)>\mu(s;x_-) \}$  such that
$$\int_{\Omega_k^1 \cup \Omega_k^2} \mu(y_-) -\mu(x_-)=0 .$$
We denote by
$$\Omega_0^1 := \{  s: \mu(s;y_-) > \mu(s;x_-)\}\setminus (\cup_{k\ge 1} \Omega_k^1)$$ (it may be the empty set).
We also define
 a partition  $\{\Omega_k^3:
m(\Omega_k^3)<\infty \}_{k\ge 1}$ of $\{ s: \mu(s; y_-)=\mu(s;x_-) \}$.
For brevity, we numerate $\{\Omega_k^1 \cup \Omega_k^2\}_{k\ge 1} \cup \{\Omega_k^3\}_{k\ge 1}$ as $\{\Lambda_{k}\}_{k\ge 1}$ and let   $B_0:=\Omega_0^1   $.

We consider
couples
$$
\Big((\sigma_ +^x)^{-1}  ( \Delta_k \cap (0,m(s(x_+)))) , ~  (\sigma_ +^y)^{-1}  (\Delta_k \cap (0,m(s(y_+))))
\Big) $$
and
$$
\Big((\sigma_ -^x)^{-1}   ( \Lambda_k \cap (0,m(s(x_-)))),  ~(\sigma_-^y )^{-1}(\Lambda_k \cap (0,m(s(y_-))))\Big) .
$$
Note that the measures of elements in any couple above may not be the same.
We can find sequences  $\{\Delta_k^{cx}\}$, $\{\Delta_k^{cy}\}$, $\{\Lambda_k^{cx}\}$ and  $\{\Lambda _k^{cy}\}$ of disjoint  sets of finite measures in $(0,\infty)$ such that
$$(m\oplus m) ((\sigma_ +^x)^{-1}  ( \Delta_k \cap (0,m(s(x_+))))\oplus \Delta_k^{cx}) = (m \oplus m) ( (\sigma_ +^y)^{-1} (\Delta_k \cap (0,m(s(y_+)))) \oplus  \Delta_k^{cy}),$$
and
$$(m \oplus m) ((\sigma_ -^x)^{-1}  ( \Lambda_k \cap (0,m(s(x_-))))\oplus \Lambda_k^{cx}) = (m \oplus m) ( (\sigma_ -^y)^{-1} (\Lambda_k \cap (0,m(s(y_-)))) \oplus  \Lambda_k^{cy}).$$

Now, consider $A_0:=(0,\infty)\setminus\left(\cup _k \Delta_k\right)$ and $B_0:=(0,\infty)\setminus\left(\cup _k \Lambda_k\right) $.
Since $\tau(y)=\tau(x)$,
it follows that there exist partitions  $\{\cX^0_k:m(\cX^0_k)<\infty \}$ and $\{\cY^0_k:m(\cY^0_k)<\infty \}$ of $A_0$ and $B_0$, respectively,
such that
$$ \int_{\cX_k^0} \mu(y_+) -\mu(x_+) =\int_{\cY^0_k}\mu(y_-) -\mu(x_-) .$$
We consider
couples
$$
\left((\sigma_ +^x)^{-1}  ( \cX_k^0 \cap (0,m(s(x_+)))) , ~  (\sigma_ +^y)^{-1} (\cX_k^0 \cap (0,m(s(y_+))))
\right) $$
and
$$
\left( (\sigma_-^x)^{-1}   (\cY^0_k \cap (0,m(s(x_-)))) ,  ~(\sigma_-^y )^{-1} (\cY^0_k \cap (0,m(s(y_-))))
\right) .
$$
Note that the measures of elements in any couple may not  be the same.
We can find sequences  $\{\cA_k^{cx}\}$, $\{\cA_k^{cy}\}$, $\{\cB_k^{cx}\}$ and $\{\cB_k^{cy}\}$ of disjoint sets of finite measures in $(0,\infty)$ such that
$$
(m \oplus m )\left(
(\sigma_ +^x)^{-1}  ( \cX_k^0 \cap (0,m(s(x_+))))\oplus \cA_k^{cx}  \right)  = (m \oplus m ) \left( (\sigma_ +^y)^{-1} (\cX_k^0 \cap (0,m(s(y_+))))\oplus \cA_k^{cy}
\right) $$
and
$$
(m \oplus m) \left(
(\sigma_ -^x)^{-1}  ( \cY_k^0 \cap (0,m(s(x_-))))\oplus \cB_k^{cx}  \right)  = (m \oplus m) \left((\sigma_ -^y)^{-1} (\cY_k^0 \cap (0,m(s(y_-))))\oplus \cB_k^{cy}
\right).
$$
Now, we numerate the following disjoint couples of subsets in $\oplus_{k=1}^4  (0,\infty)$ by $(X_k,Y_k)$,
$$\Big( (\sigma_ +^x)^{-1}  ( \Delta_k \cap (0,m(s(x_+))))\oplus \Delta_k^{cx} \oplus \varnothing \oplus  \varnothing  ,\quad (\sigma_ +^y)^{-1}  ( \Delta_k \cap (0,m(s(y_+))))\oplus \Delta_k^{cy} \oplus \varnothing \oplus  \varnothing   \Big),$$
$$ \Big ( (\sigma_ -^x)^{-1}  ( \Lambda_k \cap (0,m(s(x_-))))\oplus \Lambda_k^{cx}\oplus \varnothing\oplus  \varnothing, \quad(\sigma_ -^y)^{-1}  ( \Lambda_k \cap (0,m(s(y_-))))\oplus \Lambda_k^{cy}\oplus \varnothing\oplus  \varnothing   \Big), $$
\begin{align*}
\Big(
&(\sigma_ +^x)^{-1}  ( \cX_k^0 \cap (0,m(s(x_+)))) \cup (\sigma_ -^x)^{-1}  ( \cY_k^0 \cap (0,m(s(x_-))))\oplus  \varnothing\oplus  (\cA_k^{cx}\cup \cB_k^{cx} ) \oplus  \varnothing ,\\
&
(\sigma_ +^y)^{-1}  ( \cX_k^0 \cap (0,m(s(y_+)))) \cup
(\sigma_ -^y)^{-1}  ( \cY_k^0 \cap (0,m(s(y_-))))\oplus  \varnothing\oplus (\cA_k^{cy} \cup \cB_k^{cy}) \oplus  \varnothing
\Big).
\end{align*}
In particular,  the measures of $X_k$ and $Y_k$ are the same.

Since there exists a measure-preserving isomorphism  between
$(0,\infty)$ and
$$(0,\infty)^{\oplus  ^3}\oplus ((0,\infty )\setminus s(x))~\mbox{~(or $(0,\infty)^{\oplus  ^3} \oplus (0,\infty )\setminus s(y)$)},$$
  we may
view $(X_k)$ (and $(Y_k)$) as disjoint subsets of $(0,\infty)\oplus (0,\infty)$.
Moreover, since the complements  $(\cup (X_k))^c$ (resp. $(\cup (X_k))^c$) of $\cup (X_k)$ (resp. $\cup (Y_k)$) has infinite measure,
it follows that there
exists a  partition $\{P_n\}$ (resp. $\{Q_n\}$) of  $(\cup (X_k))^c$ (resp. $(\cup (Y_k))^c$)  with $(m \oplus m ) (P_n) =1 $
(resp. $(m \oplus m ) (Q_n) =1 $).

Numerating $(X_k)$ and $(P_n)$ (resp. $(Y_k)$ and $(Q_n)$),  we complete the proof.
\end{proof}


Now, we prove the
equivalence between (b) and (d) in Theorem \ref{main}.
  \begin{proposition}\label{btod}
  Let $x,y\in L_1(0,\infty)$.
  Then,
  $x\prec y$ if and only if $\tau(f(x))\le \tau(f(y))$ for any convex function $f$ on the real axis with $f(0)=0$ such that $f(x)$ and $f(y)$ are integrable.
  Here, $\tau(f)$ stands for $\int f dm$ ($m$ is the Lebesgue measure).
  \end{proposition}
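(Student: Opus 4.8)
The plan is to deduce both directions from the ``hinge'' characterisation of majorisation in Proposition~\ref{treq}, combined with the classical integral representation of a convex function vanishing at the origin.

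For the implication ``$\Leftarrow$'' it suffices to test the hypothesis against a small family of convex functions. Taking $f(s)=s$ and $f(s)=-s$ (affine, vanishing at $0$, with $f(x),f(y)\in L_1(0,\infty)$ trivially) gives $\tau(x)\le\tau(y)$ and $\tau(x)\ge\tau(y)$, hence $\tau(x)=\tau(y)$. For each fixed $t\ge0$ the functions $f(s)=(s-t)_+$ and $f(s)=(-s-t)_+$ are convex, vanish at $0$, and satisfy $0\le f(x)\le|x|\in L_1(0,\infty)$ (and likewise for $y$); the hypothesis therefore yields $\tau((x-t)_+)\le\tau((y-t)_+)$ and $\tau((-x-t)_+)\le\tau((-y-t)_+)$ for all $t\ge0$. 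By Proposition~\ref{treq}, applied to $\cM=L_\infty(0,\infty)$, this is exactly $x\prec y$.

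For the implication ``$\Rightarrow$'' I would use that every convex $f\colon\bR\to\bR$ with $f(0)=0$ can be written as
\begin{equation*}
f(s)=f'_+(0)\,s+\gamma\,s_-+\int_{(0,\infty)}(s-t)_+\,d\mu_+(t)+\int_{(0,\infty)}(-s-t)_+\,d\mu_-(t),\qquad s\in\bR,
\end{equation*}
where $\gamma=f'_+(0)-f'_-(0)\ge0$, $\mu_+$ is the restriction of the non-negative Radon measure $f''$ to $(0,\infty)$, and $\mu_-$ is the reflection in the origin of the restriction of $f''$ to $(-\infty,0)$; this is obtained by splitting $f$ into its pieces on $[0,\infty)$ and on $(-\infty,0]$ and integrating $f''$ twice. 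Substituting $s=x(\cdot)$ and integrating over $(0,\infty)$, one observes that the pointwise identity exhibits $f(x(\cdot))-f'_+(0)\,x(\cdot)-\gamma\,x_-(\cdot)$ as the sum of the two non-negative functions $s\mapsto\int_{(0,\infty)}(x(s)-t)_+\,d\mu_+(t)$ and $s\mapsto\int_{(0,\infty)}(-x(s)-t)_+\,d\mu_-(t)$; since $f(x),x,x_-\in L_1(0,\infty)$, this sum lies in $L_1(0,\infty)$, hence so does each summand, and Tonelli's theorem then gives
\begin{equation*}
\tau(f(x))=f'_+(0)\,\tau(x)+\gamma\,\tau(x_-)+\int_{(0,\infty)}\tau((x-t)_+)\,d\mu_+(t)+\int_{(0,\infty)}\tau((-x-t)_+)\,d\mu_-(t),
\end{equation*}
together with the analogous identity for $y$ (all integrals finite by the same argument). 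Now $\tau(x)=\tau(y)$ by Proposition~\ref{treq}; $\tau(x_-)=\|x_-\|_1\le\|y_-\|_1=\tau(y_-)$ because $x_-\prec\prec y_-$; $\gamma\ge0$ and $\mu_\pm\ge0$; and $\tau((x-t)_+)\le\tau((y-t)_+)$, $\tau((-x-t)_+)\le\tau((-y-t)_+)$ for every $t>0$, again by Proposition~\ref{treq}. Comparing the two displayed expressions term by term yields $\tau(f(x))\le\tau(f(y))$.

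The only genuinely delicate point is the interchange of the $s$-integral with the $d\mu_\pm$-integrals under the mere hypothesis $f(x),f(y)\in L_1$: since $f$ need not be of one sign, the individual hinge pieces cannot be dominated by $f(x)$ directly, and the device above — showing that the \emph{sum} of the non-negative pieces is integrable because it differs from $f(x)$ by an $L_1$ function, so Tonelli applies — is what makes the computation legitimate. Everything else is a routine consequence of Proposition~\ref{treq} and the elementary representation of convex functions.
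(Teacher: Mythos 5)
Your proof is correct, but the forward direction is genuinely different from the paper's. For ``$\Rightarrow$'' the paper invokes its decomposition theorem (Proposition~\ref{partition}) to split $(x\oplus 0,y\oplus 0)$ into countably many pairs of restrictions to sets of equal \emph{finite} measure with $(x\oplus 0)\chi_{B_n}\prec(y\oplus 0)\chi_{A_n}$, applies the known finite-measure result \cite[Proposition 1.3]{HN2} on each piece, and sums, using $f(0)=0$ to discard the padding. You instead bypass the decomposition entirely: you write $f$ as a non-negative mixture of the hinge functions $(s-t)_+$ and $(-s-t)_+$ plus an affine term and $\gamma s_-$, reduce to the hinge inequalities supplied by Proposition~\ref{treq}, and justify the interchange of integrals by observing that the sum of the non-negative hinge pieces differs from $f(x)$ by an $L_1$ function, so Tonelli applies. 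This is more self-contained (it needs only Proposition~\ref{treq} and a standard representation of convex functions, not Proposition~\ref{partition}), and it handles the integrability hypothesis cleanly; the paper's route is shorter on the page only because Proposition~\ref{partition} is developed anyway for Theorem~\ref{stoinfi}. For ``$\Leftarrow$'' the two arguments are essentially the same idea -- testing against specific convex functions -- except that the paper tests against general non-decreasing convex functions and quotes \cite[Proposition 1.2]{HN2}, while you test against the hinges and quote Proposition~\ref{treq}; both are fine.
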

\begin{proof}$\Rightarrow$.
By Proposition \ref{partition}, there exist sequences of disjoint sets
$$\{A_n\subset \bR_+ \oplus \bR_+ \}\mbox{ and  }\{B_n \subset   \bR_+\oplus \bR_+ \}$$
such that $m(A_n)=m(B_n)<\infty$, $\cup A_n =\cup B_n =\bR_+\oplus \bR_+$  and $(x\oplus 0)\chi_{B_n}\prec (y\oplus 0) \chi_{A_n}$ for every $n$.
By the classic result (see e.g. \cite[Proposition 1.3]{HN2}), we have
$$(\tau\oplus\tau)(f( (x\oplus 0) \chi_{B_n}) )\le (\tau\oplus\tau)(f( (y\oplus 0)\chi_{A_n}))$$ for every $n$.
Hence, we have
$$ (\tau\oplus\tau)(f(x\oplus 0)) =\sum_n (\tau\oplus\tau)(f( (x\oplus 0) \chi_{B_n}) ) \le \sum _n (\tau\oplus\tau)(f( (y\oplus 0)\chi_{A_n}))=(\tau\oplus\tau)(f(y\oplus 0 )) .$$
Since $f(0)=0$, it follows that $\tau(f(x))\le \tau(f(y))$.

$\Leftarrow$.
Let $f(t)=t$.
Then, we have $\tau(x)\le \tau(y)$.
Let $f(t)=-t$.
We have $\tau(-x)\le \tau(-y)$.
Hence, $\tau(x)=\tau(y)$.
On the other hand, we take any non-decreasing continuous convex function $f$ on $\bR_+$ with $f(t)=0$ when $t<0$.
We have $\tau(f(x_+))\le \tau(f(y_+))$.
Hence, $x_+ \prec \prec y_+$ (see \cite[Proposition 1.2]{HN2}).
Now, we take any non-decreasing continuous convex function $g$ on $\bR_+$ with $g(t)=0$ when $t<0$.
We define  $f(t)=g(-t)$ on $\bR $.
We have
$$\tau(g(x_-))=\tau(f(-x_-))=\tau(f(x)) \le \tau(f( y )) =\tau(f(-y_-)) = \tau(g(y_-)) .$$
Hence,
$\tau(g(x_-))\le \tau(g(y_-))$,
i.e.,
$x_-\prec \prec y_-$ (see \cite[Proposition 1.2]{HN2}).
\end{proof}

\section{Alberti--Uhlmann problem in the setting of  infinite von Neumann algebras}
Let $L_1(\cM,\tau)$ be the noncommutative $L_1$-space affiliated with a semifinite von Neumann algebra $\cM$ equipped with a faithful normal semifinite trace $\tau$.
Throughout this section, we always assume that  $\tau({\bf 1})=\infty $.

The following proposition for  positive operators can be found in \cite[Theorem 4.5]{Hiai}.
  \begin{proposition}\label{majtraceequ}
 Let $y\in L_1(\cM,\tau)_h$ and
  let $\varphi \in DS(\cM)$.
  Then, $\varphi(y)\prec y$.
  \end{proposition}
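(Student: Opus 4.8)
The plan is to reduce the statement $\varphi(y)\prec y$ for a general self-adjoint $y\in L_1(\cM,\tau)_h$ to the positive case, which is already available as \cite[Theorem 4.5]{Hiai}, by using the shifting trick that appears in the proof of Theorem \ref{sto} together with a truncation argument. The obstruction to a naive reduction is that $y$ need not be bounded below by a scalar, so one cannot simply write $y = (y+c{\bf 1}) - c{\bf 1}$ with $y+c{\bf 1}\ge 0$; this is exactly where the hypothesis $\tau({\bf 1})=\infty$ forces extra care, since $c{\bf 1}\notin L_1(\cM,\tau)$. The way around this is to first truncate $y$ from below so that the negative part becomes bounded, and only then shift.

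First I would set, for $n\ge 1$,
$$ y_n := y\,e^{y}(-n,\infty) + \frac{1}{\text{(suitable normalization)}}\Big(\text{lower tail of }y\Big), $$
mirroring the construction in the proof of Theorem \ref{sto}; more precisely, replace the part of $y$ supported on $e^y(-\infty,-n]$ by a constant operator on that projection chosen so that the trace is preserved on that projection, giving $y_n \prec y$ with $(y_n)_- \le n\,e^y(-\infty,-n]$ bounded and $\|y - y_n\|_1 \to 0$. Then $y_n + n{\bf 1}\ge 0$; however $n{\bf 1}$ is not in $L_1$, so I would instead work inside a cut-down algebra: fix a $\tau$-finite projection context is not available globally, so the cleaner route is to observe that $\varphi$ is defined on $L_1(\cM,\tau)+\cM$ (as recalled just before the proposition) and that $\varphi({\bf 1}) = {\bf 1}$. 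Hence $\varphi(y_n + n{\bf 1}) = \varphi(y_n) + n{\bf 1}$, and since $y_n + n{\bf 1}\in (L_1(\cM,\tau)+\cM)_+$ with $\varphi$ doubly stochastic, \cite[Theorem 4.5]{Hiai} (which concerns positive operators and, as noted in the excerpt, whose proof does not use finiteness of $\tau$ for the implication we need) gives $\varphi(y_n) + n{\bf 1} \prec y_n + n{\bf 1}$. Majorization is translation invariant by a scalar multiple of the identity — both sides shift by $n{\bf 1}$ in eigenvalue functions and the trace condition is preserved — so $\varphi(y_n)\prec y_n$, and thus $\varphi(y_n)\prec y_n \prec y$.

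Finally I would pass to the limit $n\to\infty$. Since $\varphi$ is $\|\cdot\|_1$-contractive on $L_1(\cM,\tau)$ (again by \cite[Proposition 4.1]{Hiai}, which does not require finiteness), we get $\|\varphi(y) - \varphi(y_n)\|_1 = \|\varphi(y - y_n)\|_1 \le \|y - y_n\|_1 \to 0$. Each $\varphi(y_n)$ lies in $\Omega(y)$, and $\Omega(y)$ is $\|\cdot\|_1$-closed: this is the infinite analogue of Proposition \ref{propcc}, whose proof carries over verbatim once one uses the submajorization-stability estimate \eqref{tri} together with Proposition \ref{HN1} and the decomposition of majorization into $(\cdot)_+$ and $(\cdot)_-$ parts via Proposition \ref{2.5} (indeed the inequality chain in \eqref{xxn} uses only \eqref{tri}, Proposition \ref{HN1} and the identity $\|z\|_1 = \int_0^\infty \lambda(s;z_+)\,ds + \int_0^\infty \lambda(s;z_-)\,ds$, all valid for $\tau({\bf 1})=\infty$). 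Hence $\varphi(y) = \|\cdot\|_1\text{-}\lim_n \varphi(y_n) \in \Omega(y)$, i.e. $\varphi(y)\prec y$. The main obstacle is purely the bookkeeping around the truncation $y_n$ and verifying that the ``shift by $n{\bf 1}$'' step is legitimate inside $L_1(\cM,\tau)+\cM$ with $\tau$ infinite; once that is in place, the argument is a routine combination of the positive-operator case, $\|\cdot\|_1$-contractivity of $\varphi$, and closedness of $\Omega(y)$.
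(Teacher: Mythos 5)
Your strategy (truncate $y$ from below, shift by a multiple of the identity to reduce to Hiai's positive-operator theorem, then pass to the limit using $\|\cdot\|_1$-contractivity of $\varphi$ and closedness of $\Omega(y)$) is genuinely different from the paper's proof, which is a direct two-line argument: from $\varphi(y)-s{\bf 1}=\varphi(y-s{\bf 1})\le\varphi((y-s{\bf 1})_+)$ one compresses by the support projection of $(\varphi(y)-s)_+$ to get $\tau((\varphi(y)-s)_+)\le\tau((y-s)_+)$, repeats this for $-y$, and invokes the trace characterization of Proposition \ref{treq}. Unfortunately your route has a genuine gap at the step ``majorization is translation invariant by a scalar multiple of the identity, so $\varphi(y_n)\prec y_n$''. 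In the infinite-trace setting this fails as you use it. By Remark \ref{alwayspositive}, for $z\in L_1(\cM,\tau)_h$ one has $\lambda(z)=\mu(z_+)\ge 0$, and for $C>0$ with $z+C{\bf 1}\ge 0$ one computes $\mu(t;z+C{\bf 1})=\lambda(t;z)+C=\mu(t;z_+)+C$ for every finite $t$: every set $\{z+C{\bf 1}>s\}$ with $s<C$ has infinite trace, so the negative spectrum of $z$ is invisible in $\mu(\cdot;z+C{\bf 1})$. Consequently $\varphi(y_n)+C{\bf 1}\prec y_n+C{\bf 1}$ (which for positive elements of $(L_1(\cM,\tau)+\cM)_+$ of infinite trace can only mean submajorization) unshifts to $\varphi(y_n)_+\prec\prec(y_n)_+$ and nothing more; the condition $\varphi(y_n)_-\prec\prec(y_n)_-$ required by Definition \ref{definfi} is not recovered. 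Concretely, in $L_1(0,\infty)$ the functions $a=\chi_{(0,1)}-2\chi_{(1,3/2)}$ and $b=\chi_{(0,1)}-\chi_{(1,2)}$ satisfy $\mu(a+C)=\mu(b+C)$ for every $C\ge 2$ and $\tau(a)=\tau(b)$, yet $a\not\prec b$ because $a_-\not\prec\prec b_-$. This loss of information under shifting is precisely what separates the infinite case from the finite one, where $\lambda$ records the whole spectrum and the shift is lossless (as in the proof of Theorem \ref{sto}).

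The gap is repairable: you must control the negative parts separately, e.g.\ truncate $y$ from above as well so that $-y_n$ is also bounded below, shift $-y_n$, and apply the positive-operator result to $\varphi(-y_n)+C'{\bf 1}$ to obtain $\varphi(y_n)_-\prec\prec(y_n)_-$; or, more directly, use $-\varphi(y)=\varphi(-y)\le\varphi(y_-)$ together with the positive-operator submajorization --- which is essentially the paper's argument and requires no truncation or limit at all. Two smaller points: with your choice of $y_n$ the constant placed on $e^y(-\infty,-n]$ is the average of $y$ there, which is $\le -n$, so $(y_n)_-$ is bounded but not by $n$; and the $\|\cdot\|_1$-closedness of $\Omega(y)$ in the infinite setting, while true, is not literally Proposition \ref{propcc} (stated for finite algebras) and needs the adaptation via $\lambda(z)=\mu(z_+)$ and \eqref{tri} that you only sketch.
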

  \begin{proof}
 Since $\varphi(y) -s =\varphi(y-s) \le \varphi((y-s)_+)$ for every $s\in \bR$,
 it follows that
 $$\tau(  (\varphi(y) -s)_+)= \tau(p (\varphi(y) -s)p) \le \tau(p\varphi((y-s)_+)p )\le \tau(  \varphi((y-s)_+)  )=\tau((y-s)_+)$$ for every $s\ge  \bR_+$,
 where $p:=s( (\varphi(y) -s)_+)$.
 The same argument shows that
  $$\tau((-\varphi(y) -s)_+) \le \tau(\varphi((-y-s)_+))=\tau((-y-s)_+)$$ for every $s\ge  \bR_+$.
 Since $\varphi$ is trace-preserving, it follows from  Proposition \ref{treq} that the assertion holds.
  \end{proof}


Now, we present the proof of (b)$\Rightarrow$(a) in Theorem \ref{main}.
  \begin{theorem}\label{stoinfi}
  If $\cM$ is a $\sigma$-finite von Neumann algebra, then
  for any $x\prec y\in L_1(\cM,\tau)_h$, there exists
a normal doubly stochastic operator $\varphi:\cM \oplus L_\infty(0,\infty)  \to \cM \oplus L_\infty (0,\infty)$ such that
$\varphi(y\oplus 0)=x\oplus 0 $.
  \end{theorem}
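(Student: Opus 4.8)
The plan is to reduce to the commutative, non-atomic case, and there to build $\varphi$ as an infinite direct sum of doubly stochastic operators on finite measure spaces, whose existence is supplied by the decomposition Proposition~\ref{partition} together with the result Theorem~\ref{sto} for finite von Neumann algebras. First I would set up the reduction: let $\cA$ be the abelian von Neumann subalgebra of $\cM$ generated by the spectral projections of $y$ together with a countable increasing sequence of $\tau$-finite projections with supremum $\mathbf{1}-s(y)$ (such a sequence exists since $\cM$ is $\sigma$-finite), so that $\tau|_{\cA}$ is semifinite, $y$ is affiliated with $\cA$, and there is a $\tau$-preserving normal conditional expectation $\cE_y\colon\cM\to\cA$; define $\cB$ analogously from $x$. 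The atoms of $\cA$ form an at most countable family of $\tau$-finite projections; splitting each of them into an interval yields a non-atomic commutative algebra $\widetilde{\cA}$, a normal doubly stochastic embedding $\iota_{\cA}\colon\cA\to\widetilde{\cA}$ sending $y$ to some $\widetilde{y}$ with $\mu(\widetilde{y}_{+})=\mu(y_{+})$ and $\mu(\widetilde{y}_{-})=\mu(y_{-})$, and a normal doubly stochastic averaging $P_{\cA}\colon\widetilde{\cA}\to\cA$ satisfying $P_{\cA}\circ\iota_{\cA}=\mathrm{id}$ and $P_{\cA}(\widetilde{y})=y$; similarly from $\cB$ one gets $\widetilde{\cB},\widetilde{x},\iota_{\cB},P_{\cB}$. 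Since $\tau(\mathbf{1})=\infty$, both $\widetilde{\cA}$ and $\widetilde{\cB}$ are $\ast$-isomorphic to $L_\infty(0,\infty)$, and $\widetilde{x}\prec\widetilde{y}$ because majorization depends only on the eigenvalue functions and $\tau(x)=\tau(y)$.

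The core step is the commutative non-atomic case: for $u\prec v\in L_1(0,\infty)$ I claim there is a normal doubly stochastic $\varphi_0$ on $L_\infty(0,\infty)\oplus L_\infty(0,\infty)$ with $\varphi_0(v\oplus 0)=u\oplus 0$. Apply Proposition~\ref{partition} to $u\prec v$ to obtain disjoint families $\{A_n\}$, $\{B_n\}$ with $\bigcup_n A_n=\bigcup_n B_n=\bR_+\oplus\bR_+$, $m(A_n)=m(B_n)<\infty$, and $(u\oplus 0)\chi_{B_n}\prec(v\oplus 0)\chi_{A_n}$ for all $n$. On the finite measure space $A_n$ the function $(v\oplus 0)\chi_{A_n}$ majorizes a function $h_n\in L_1(A_n)$ equimeasurable with $(u\oplus 0)\chi_{B_n}$, so by Theorem~\ref{sto} there is a doubly stochastic operator on $L_\infty(A_n)$ taking $(v\oplus 0)\chi_{A_n}$ to $h_n$; composing it with a measure-preserving $\ast$-isomorphism $L_\infty(A_n)\to L_\infty(B_n)$ carrying $h_n$ to $(u\oplus 0)\chi_{B_n}$ yields a doubly stochastic $\psi_n\colon L_\infty(A_n)\to L_\infty(B_n)$, necessarily normal, with $\psi_n((v\oplus 0)\chi_{A_n})=(u\oplus 0)\chi_{B_n}$. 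Put $\varphi_0(a):=\sum_n\psi_n(a\chi_{A_n})$ for $a\in L_\infty(\bR_+\oplus\bR_+)$; since $\|\psi_n\|=1$ and the $B_n$ are pairwise disjoint this series lies in $L_\infty$, $\varphi_0$ is doubly stochastic, and it is normal because the supremum of an increasing net with pairwise disjoint supports is computed coordinatewise. Finally $\varphi_0(v\oplus 0)=\sum_n\psi_n((v\oplus 0)\chi_{A_n})=\sum_n(u\oplus 0)\chi_{B_n}=u\oplus 0$.

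It remains to assemble. Transporting $\varphi_0$ through fixed $\ast$-isomorphisms $\widetilde{\cA}\cong L_\infty(0,\infty)\cong\widetilde{\cB}$ that identify $\widetilde{y},\widetilde{x}$ with the corresponding $v,u$, I obtain a normal doubly stochastic $\psi\colon\widetilde{\cA}\oplus L_\infty(0,\infty)\to\widetilde{\cB}\oplus L_\infty(0,\infty)$ with $\psi(\widetilde{y}\oplus 0)=\widetilde{x}\oplus 0$, and I set
$$\varphi:=(P_{\cB}\oplus\mathrm{id})\circ\psi\circ(\iota_{\cA}\oplus\mathrm{id})\circ(\cE_y\oplus\mathrm{id})\colon\quad\cM\oplus L_\infty(0,\infty)\lgra\cM\oplus L_\infty(0,\infty).$$
Each of the four factors is normal and doubly stochastic, hence so is $\varphi$; and since $\cE_y(y)=y$, $\iota_{\cA}(y)=\widetilde{y}$, $\psi(\widetilde{y}\oplus 0)=\widetilde{x}\oplus 0$ and $P_{\cB}(\widetilde{x})=x$, we get $\varphi(y\oplus 0)=x\oplus 0$.

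The steps I expect to be most delicate are the reduction --- arranging that $\tau|_{\cA}$ and $\tau|_{\cB}$ are semifinite (so the conditional expectations exist) and splitting the atoms consistently --- and the recognition that a single copy of $L_\infty(0,\infty)$ is exactly the amount of ``slack'' that Proposition~\ref{partition} needs: by the examples of Section~\ref{s:h} one cannot take $\cA=\cB=\cM$, and this already fails for commutative $\cM$. Everything else amounts to checking that compositions and countable direct sums of normal doubly stochastic operators remain normal and doubly stochastic.
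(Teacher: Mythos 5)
Your proposal is correct and follows essentially the same route as the paper's proof: reduce to the commutative non-atomic case via eigenvalue-preserving abelian subalgebras and a trace-preserving normal conditional expectation, apply Proposition~\ref{partition} to split the problem into countably many finite-measure pieces, solve each piece with the finite-measure-space result (Day's theorem / Theorem~\ref{th:f}), and assemble a normal doubly stochastic operator as a direct sum. The only differences are cosmetic: the paper disposes of atoms by assuming $\cM$ atomless at the outset (citing \cite[Lemma 2.3.18]{LSZ}) where you split them by hand with an embedding/averaging pair, and the paper invokes Day's theorem directly between the two measure spaces $A_n$ and $B_n$ where you factor through an equimeasurable intermediary.
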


\begin{proof}
Without loss of generality, we may assume that $\cM$ is atomless \cite[Lemma 2.3.18]{LSZ}.
For every $x\in \Omega (y)$,
there exists an abelian non-atomic von Neumann subalgebra $\cN_1^x$ of $s(x_+)\cM s(x_+)$
containing all spectral projections of $x_+$, and a $*$-isomorphism $V_1$ from $S(\cN_1^x,\tau)$ onto $S([0,\tau(s(x_+)),m)$ such that
$\lambda (V_1(f))  =\lambda (f)$ for every $f\in S(\cN_1^x,\tau)_h$  (see \cite[Lemma 1.3]{CKS}, see also \cite[Proposition 3.1]{BKS1}, \cite{CK} and  \cite[Lemma 4.1]{CS}).
Similarly, we have a $*$-isomorphism $V_2$ from $S(\cN_2^x,\tau)$ onto $S([0,\tau(s(x_-)),m)$ such that
$\lambda (V_2(f))  =\lambda (f)$ for every $f\in S(\cN_2^x,\tau)_h$,
where $\cN_2^x$ is  an abelian non-atomic von Neumann subalgebra  of $s(x_-)\cM s(x_-)$
containing all spectral projections of $x_-$.
We denote  $$V_x:= V_1 ^x \oplus V_2^x.$$
There exists a commutative   atomless von Neumann subalgebra $\cN_3^x $ of $\cM_{n(x)}$, on  which $\tau$ is again semifinite.

The same argument show that there exists such a trace-preserving $*$-isomorphism $V_y:  S(\cN_1^y\oplus \cN_2^y) \to S(0, \tau(s(y)))$,
where $\cN_1^y$ (resp. $\cN_2^y$) is an atomless commutative reduced  von Neumann subalgebra of $\cM_{s(y_+)}$ (resp. $\cM_{s(y_-)}$) containing all spectral projections of $y_+$ (resp. $y_-$).
$\cN_3^y$ denotes a commutative atomless von Neumann subalgebra of $\cM_{n(y)}$.
There exists a trace-preserving normal conditional expectation $\cE:\cM\to \cN_1^y \oplus \cN_2^y\oplus \cN_3^y $  \cite[Proposition 2.1]{DDPS}.

 Since $\cM$ is $\sigma$-finite, it follows that  $\cN_3$ is isomorphic to a subalgebra of $L_\infty (0,\infty )$.
Hence, there are trace-preserving $*$-isomorphisms $U_1$ and $U_2$ such that
$$   L_\infty (0, \tau(s(x)) ) \oplus L_\infty (0,\infty ) \backsimeq_{U_1} \cN_1^{x}\oplus \cN_2^{x} \oplus \cN_3^{x} \oplus L_\infty (0,\infty ) $$
and
$$   L_\infty (0, \tau(s(y)) ) \oplus L_\infty (0,\infty ) \backsimeq_{U_2} \cN_1^{y}\oplus \cN_2^{y} \oplus \cN_3^{y} \oplus L_\infty (0,\infty ) \subset^\cE \cM \oplus L_\infty (0,\infty) . $$
Hence, without loss of generality, we may assume that
$y\in  L_1 (0, \tau(s(y)) ) $, $x\in  L_1 (0,\tau(s(x)) )$ and it suffices to show that there exists a doubly stochastic operator $\varphi$ from  $L_\infty (0, \tau(s(y)) ) \oplus L_\infty (0,\infty )$ to $L_\infty (0,\tau(s(x)) ) \oplus L_\infty (0,\infty ) $ whose $L_1$-extension maps $y$ to $x$.

By Proposition  \ref{partition}, there exist
 $\{A_n\subset (0,\tau(s(y)))\oplus \bR_+ \}$
and
$\{B_n \subset (0,\tau(s(x)))  \oplus \bR_+ \}$
such that
$m(A_n)=m(B_n)<\infty$,
$\cup A_n =(0,\tau(s(y)))\oplus \bR_+$
and
$\cup B_n= (0,\tau(s(x)))\oplus \bR_+  $  and $(x\oplus 0) \chi_{B_n}\prec (y\oplus 0)\chi_{A_n}$  for every $n\ge 0$.
By Theorem \ref{th:f} (see also \cite{Day}),
there are normal doubly  stochastic operators $\varphi_n :L_\infty (B_n) \to L_\infty (A_n)$ with $\varphi_n (y_{A_n})=x_{B_n}$.
By taking the direct sum $\Phi$ of all $\varphi_n$, we obtain a  doubly stochastic operator
$$ \Phi(=\oplus_{n\ge 1}\varphi_n ):=L_\infty (0,\tau(s(x)))  \oplus L_\infty (0,\infty ) \to  L_\infty (0,\tau(s(y)))  \oplus L_\infty (0,\infty )  $$
such that $\Phi(y)=x$.
Moreover,
since every $\varphi_n$ is normal, it follows that the direct sum $\Phi$ is also normal.
Indeed,  let $z_i \uparrow z\in L_\infty (0,\tau(s(y)))  \oplus L_\infty (0,\infty ) $.
For any $n\ge 1$,  we have
$$\varphi_n(z_i \chi_{A_n}) \uparrow \varphi(z\chi_{A_n}).$$
Since
$$\Phi(z_i)=\oplus_{n\ge 1}\varphi_n(z_i\chi_{A_n})$$
and
$$\Phi(z)=\oplus_{n\ge 1}\varphi_n(z\chi_{A_n}),$$
it follows that
 that $\Phi(z_i)\uparrow \Phi(z)$, which implies that $\Phi$ is normal.
\end{proof}

It is interesting to see that
 Hiai's conjecture is not true in the setting of non-$\sigma$-finite semifinite von Neumann algebras.
 The proof is similar with that of Theorem~\ref{stoinfi}.
Before proceeding to the proof of this result, we need the following lemma.
We would like to thank Dr. Dmitriy Zanin for his help with  the proof of following lemma.
\begin{lemma}\label{lemma:z}
Let $\cA$ be an  atomless abelian semifinite infinite von Neumann algebra equipped with a semifinite faithful normal trace $\tau$.
If $\cA$ is not $\sigma$-finite, then
\begin{align}\label{isoinfi}
\cA_{{\bf 1}-s(x)} \backsimeq \cA
\end{align}
for any $x\in S_0(\cA,\tau)$.
Here, we denote by $\cA_p$ the reduced von Neumann subalgebra $p\cA p$ of $\cA$, $p\in \cP(\cA)$.
\end{lemma}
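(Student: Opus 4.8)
The plan is to first cut $x$ down to its support, show that support projection is $\sigma$-finite, and then use the classification of atomless abelian von Neumann algebras to ``make room''.

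\emph{Step 1: $s(x)$ is $\sigma$-finite, and ${\bf 1}-s(x)$ is not.} Since $x\in S_0(\cA,\tau)$, we have $\mu(t;x)\to0$ as $t\to\infty$, so for every $n\ge1$ there is $t_n$ with $\mu(t_n;x)<1/n$; by \eqref{dis} this gives $\tau\big(E^{|x|}(1/n,\infty)\big)=d_{|x|}(1/n)\le t_n<\infty$. As $s(x)=s(|x|)=\bigvee_{n\ge1}E^{|x|}(1/n,\infty)$, the projection $s(x)$ is a countable supremum of projections of finite trace, i.e.\ $\sigma$-finite. If ${\bf 1}-s(x)$ were also $\sigma$-finite then ${\bf 1}=s(x)\vee({\bf 1}-s(x))$ would be $\sigma$-finite, contradicting the hypothesis; hence ${\bf 1}-s(x)$ is not $\sigma$-finite, and in particular $\tau({\bf 1}-s(x))=\infty=\tau({\bf 1})$, which is the obvious necessary condition for \eqref{isoinfi}.

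\emph{Step 2: a ``block'' model.} By the structure theory of atomless abelian von Neumann algebras carrying a faithful normal semifinite trace (Maharam's theorem; see e.g.\ \cite{Tak}), we may realise $\cA$, up to $*$-isomorphism, as $\bigoplus_{j\in J}\cA_j$ where each $\cA_j$ is an atomless abelian algebra of finite trace and two blocks of the same Maharam type are $*$-isomorphic. Non-$\sigma$-finiteness of $\cA$ forces $J$ to be uncountable. Put $e:=s(x)$ and $e_j:=e\wedge{\bf 1}_{\cA_j}$; since $e$ is $\sigma$-finite and the blocks are $\tau$-orthogonal, $e_j\ne0$ for only countably many $j$. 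Let $J_0\subset J$ be this countable set, so ${\bf 1}-e\ \ge\ \bigvee_{j\notin J_0}{\bf 1}_{\cA_j}$ and
\[ \cA_{{\bf 1}-e}\ \backsimeq\ \Big(\bigoplus_{j\in J_0}(\cA_j)_{{\bf 1}_{\cA_j}-e_j}\Big)\ \oplus\ \Big(\bigoplus_{j\notin J_0}\cA_j\Big). \]
For $j\in J_0$ the corner $(\cA_j)_{{\bf 1}_{\cA_j}-e_j}$ is either $0$ (when $e_j={\bf 1}_{\cA_j}$) or a nonzero atomless abelian algebra of finite trace with the \emph{same} Maharam type as $\cA_j$, hence in the latter case $*$-isomorphic to $\cA_j$ itself (a positive-measure measurable subset of an atomless homogeneous finite measure space is, after rescaling, isomorphic to the whole space). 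Letting $J_1\subseteq J_0$ be the countable set of $j$ with $e_j={\bf 1}_{\cA_j}$, we obtain $\cA_{{\bf 1}-e}\backsimeq\bigoplus_{j\in J\setminus J_1}\cA_j$. Since $J$ is uncountable and $J_1$ is countable, while non-$\sigma$-finiteness guarantees every Maharam type occurring in $\cA$ still occurs in the same abundance indexed by $J\setminus J_1$, there is a bijection $\theta\colon J\setminus J_1\to J$ with $\cA_j\backsimeq\cA_{\theta(j)}$; taking the direct sum of these isomorphisms yields $\cA_{{\bf 1}-e}\backsimeq\bigoplus_{j\in J}\cA_j\backsimeq\cA$, i.e.\ \eqref{isoinfi}.

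\emph{Main obstacle.} The substance lies in Step 2: setting up the decomposition into finite-trace homogeneous blocks, and — crucially — arguing that subtracting the $\sigma$-finite projection $s(x)$ changes neither the set of Maharam types present nor their multiplicities. The key observation making this work is that $s(x)$ can completely occupy only countably many (finite-trace) blocks, so that, $\cA$ being non-$\sigma$-finite, uncountably many blocks of each relevant type survive and can absorb the countably many altered ones under re-indexing; this is precisely where the non-$\sigma$-finiteness hypothesis is indispensable. The corner/rescaling isomorphisms and the normality and direct-sum verifications are routine.
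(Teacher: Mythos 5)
Your route is genuinely different from the paper's, and it is worth recording what the paper actually does before locating the problem. Your Step 1 ($s(x)$ is $\sigma$-finite, hence ${\bf 1}-s(x)$ is not) is exactly the paper's starting point. After that the paper never touches the global Maharam decomposition: it identifies $\cA_1:=\cA_{s(x)}$ with $L_\infty(0,\tau(s(x)))$, carves countably many pairwise orthogonal corners $\cA_2,\cA_3,\dots$ out of $\cA_{{\bf 1}-s(x)}$, each $*$-isomorphic to $\cA_1$ via maps $U_n$, sets $\cA_0:=\cA_{{\bf 1}-\oplus_n{\bf 1}_{\cA_n}}$, and runs a Hilbert-hotel shift $\alpha\colon\oplus_{n\ge0}\cA_n\to\oplus_{n\ne1}\cA_n$ acting as the identity on $\cA_0$ and as $U_{n+1}\circ U_n^{-1}$ on $\cA_n$. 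The point of that design is that only countably many mutually isomorphic standard corners are ever moved, while whatever exotic structure the remainder $\cA_0$ carries is transported by the identity; no re-indexing of the whole block decomposition, and no counting of Maharam multiplicities, is needed.

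The genuine gap in your argument is the sentence asserting that ``non-$\sigma$-finiteness guarantees every Maharam type occurring in $\cA$ still occurs in the same abundance indexed by $J\setminus J_1$.'' Non-$\sigma$-finiteness only forces the index set $J$ to be uncountable; it says nothing about how often an individual type occurs, and a type may be carried by a single finite-trace block that $s(x)$ swallows whole. Concretely, let $\cA=L_\infty(0,1)\oplus\bigoplus_{j\in J'}\cB_j$ with $J'$ uncountable and each $\cB_j$ an atomless homogeneous probability measure algebra of uncountable Maharam type, and take $x=\chi_{(0,1)}$, a finite-trace projection and hence an element of $S_0(\cA,\tau)$. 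Then $J_1$ is the singleton consisting of the separable block, $\cA_{{\bf 1}-s(x)}=\bigoplus_{j\in J'}\cB_j$ has no countably generated corner whereas $\cA$ does, so no bijection $\theta$ with $\cA_j\backsimeq\cA_{\theta(j)}$ can exist and in fact $\cA_{{\bf 1}-s(x)}\not\backsimeq\cA$. So your step is not merely unproved but false, and it cannot be repaired within your scheme. (The same example shows the lemma itself tacitly needs a standardness assumption on $\cA$: the paper's proof imports it through the citation of Bogachev's classification when identifying $\cA_{s(x)}$ with $L_\infty(0,\tau(s(x)))$ and when producing the corners $\cA_n$ inside $\cA_{{\bf 1}-s(x)}$. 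That assumption is harmless where the lemma is applied, since there the authors choose the subalgebra $\cA$ themselves; but your global type-matching step fails even in situations where the paper's countable shift goes through.)
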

\begin{proof}
Since $x$ is $\tau$-compact, it follows   that $s(x)$ is a $\sigma$-finite projection, and the atomless commutative algebra $\cA_1:= \cA_{s(x)}$ is isomorphically isomorphic (the $*$-isomorphism is denoted by $U_1$) to $L_\infty (0,s(x))$ \cite[Theorem 9.3.4]{Bogachev}. 

Since  $ \cA_{{\bf 1} -s(x)}$ is not $\sigma$-finite,
we can construct a commutative subalgebra $\cA_2$ of $ \cA_{{\bf 1} -s(x)}$ is isomorphically isomorphic to $L_\infty (0,s(x))$.

Arguing inductively, we may construct a sequence $\{\cA_n\}$ of disjoint commutative subalgebras  of $\cA$,  isomorphically isomorphic to $L_\infty (0,\tau(s(x)))$.
We denote by $U_n$ the $*$-isomorphism between $ \cA_{{\bf 1} -s(x)}$ and $\cA_n$.

Let $\cA_0:= \cA_{{\bf 1}-\oplus_n {\bf 1}_{\cA_n}}$.
Since ${\bf 1}$ is not $\sigma$-finite in $\cA$, it follows that  ${\bf 1}-\oplus_n {\bf 1}_{\cA_n}$ is not $\sigma$-finite.
Since $\cA_0$ is a reduced algebra of $\cA$, it follows that the restriction of $\tau$ on $\cA_0$ is semifinite,  faithful and normal.

Now, we can define a trace-preserving $*$-isomorphism $\alpha: \oplus _{n\ge 0}\cA_n \to \oplus _{n\ne 1 }\cA_n $ by setting
$$  \alpha(x)=\left\{
                \begin{array}{ll}
                  x, ~ x\in \cA_0, \\
                 U_{n+1}( U_n^{-1}(x)),~ x\in \cA_{n},~n\ge 1 .
                \end{array}
              \right.$$
\end{proof}

 \begin{theorem}\label{stopureinf}
  If $\cM$ is not a $\sigma$-finite von Neumann algebra, then
  for any $x\prec y\in L_1(\cM,\tau)_h$, there exists
a normal doubly stochastic operator $\varphi:\cM \to \cM $ such that
$\varphi(y)=x $.
  \end{theorem}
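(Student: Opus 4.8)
The plan is to follow the proof of Theorem~\ref{stoinfi} essentially word for word, the one new point being that the auxiliary summand $L_\infty(0,\infty)$ appearing there is now \emph{absorbed inside $\cM$} by means of (an iteration of) Lemma~\ref{lemma:z}. First I would reduce to the case of an atomless $\cM$, exactly as in Theorem~\ref{stoinfi} via \cite[Lemma 2.3.18]{LSZ}; this reduction does not restore $\sigma$-finiteness, so from now on $\cM$ is atomless and not $\sigma$-finite. Write $s(\cdot)$ for the support projection and $n(z):=\mathbf{1}-s(z)$; since $x,y$ are $\tau$-compact, $s(x)$ and $s(y)$, hence $s(x)\vee s(y)$, are $\sigma$-finite.

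Next I would build the abelian model exactly as in Theorem~\ref{stoinfi}: atomless abelian subalgebras $\cN_1^x\subset s(x_+)\cM s(x_+)$ and $\cN_2^x\subset s(x_-)\cM s(x_-)$ carrying $x_\pm$, with $\lambda$-preserving trace-preserving $\ast$-isomorphisms onto $L_\infty(0,\tau(s(x_+)))$ and $L_\infty(0,\tau(s(x_-)))$, likewise $\cN_1^y,\cN_2^y$ for $y$, and the trace-preserving normal conditional expectation $\cE\colon\cM\to\cN^y:=\cN_1^y\oplus\cN_2^y\oplus\cN_3^y$ (\cite[Proposition 2.1]{DDPS}). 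The choice of the ``null parts'' $\cN_3^x,\cN_3^y$ is where the argument differs: set $p_0:=\mathbf{1}-\bigl(s(x)\vee s(y)\bigr)$, which is not $\sigma$-finite (otherwise $\mathbf{1}$ would be), so $\tau(p_0)=\infty$ and $\cM_{p_0}$ is atomless and not $\sigma$-finite. Fix an atomless abelian subalgebra $\cQ\subset\cM_{p_0}$ with unit $p_0$ on which $\tau$ is semifinite; since $\cM_{p_0}$ is not $\sigma$-finite and carries a faithful trace-preserving conditional expectation onto $\cQ$, the algebra $\cQ$ is not $\sigma$-finite either. Put $\cN_3^x:=\cQ\oplus\cR^x$ and $\cN_3^y:=\cQ\oplus\cR^y$ with $\cR^x,\cR^y$ atomless abelian subalgebras of the $\sigma$-finite corners $\cM_{n(x)-p_0}$, $\cM_{n(y)-p_0}$ with full units; then $\cN^x:=\cN_1^x\oplus\cN_2^x\oplus\cN_3^x$ and $\cN^y$ have unit $\mathbf{1}$.

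The heart of the proof is then the absorption step. Iterating Lemma~\ref{lemma:z} (its $\ast$-isomorphism $\alpha$) applied to $\cA=\cQ$ and to $x$ the indicator of a $\tau$-finite subprojection of $\cQ$ gives $\cQ\backsimeq\cQ\oplus L_\infty(0,t)$ for every $t\in(0,\infty]$. Combining this with $\lambda$-preservation of the $V_i$'s and with $L_\infty(0,a)\oplus L_\infty(0,b)\backsimeq L_\infty(0,a+b)$ one obtains trace-preserving $\ast$-isomorphisms
$$\cN^y\;\backsimeq\;L_\infty(0,\tau(s(y)))\oplus L_\infty(0,\infty)\oplus\cN_3^y,\qquad\cN^x\;\backsimeq\;L_\infty(0,\tau(s(x)))\oplus L_\infty(0,\infty)\oplus\cN_3^x$$
in which $y$ (resp.\ $x$) is carried by the first summand, together with $\cN_3^y\backsimeq\cQ\backsimeq\cN_3^x$ (the $\sigma$-finite pieces $\cR^x,\cR^y$ and the extra $L_\infty(0,\infty)$ are all swallowed by $\cQ$). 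Fix trace-preserving $\ast$-isomorphisms $U_y,U_x$ realizing these isomorphisms and a trace-preserving $\ast$-isomorphism $W\colon\cN_3^y\to\cN_3^x$.

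Finally, exactly as in Theorem~\ref{stoinfi}, Proposition~\ref{partition} applied to $x\prec y$ transported into $L_1(0,\tau(s(y)))$ (using that $\prec$ depends only on $\mu(\cdot_\pm)$ and $\tau(\cdot)$, cf.\ Definition~\ref{definfi}) together with Theorem~\ref{th:f} (equivalently, Day~\cite{Day}) produces a \emph{normal} doubly stochastic operator $\Phi\colon L_\infty(0,\tau(s(y)))\oplus L_\infty(0,\infty)\to L_\infty(0,\tau(s(x)))\oplus L_\infty(0,\infty)$ whose $L_1$-extension sends $y\oplus 0$ to $x\oplus 0$, normality holding since $\Phi$ is a direct sum of normal maps. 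Then
$$\varphi\;:=\;\iota\circ U_x^{-1}\circ(\Phi\oplus W)\circ U_y\circ\cE,\qquad\iota\colon\cN^x\hookrightarrow\cM$$
is a normal doubly stochastic operator $\cM\to\cM$ (a composition of normal doubly stochastic operators, $\ast$-isomorphisms and a trace-preserving normal conditional expectation, with $\varphi(\mathbf{1})=\mathbf{1}$ because $\mathbf{1}_{\cN^x}=\mathbf{1}$), and tracking the summands gives $\varphi(y)=x$; together with Theorem~\ref{sigmanot} this also yields the final clause of Theorem~\ref{main}. I expect the absorption step to be the main obstacle: one must recognise that the null part is necessarily non-$\sigma$-finite and, more delicately, that a \emph{single} non-$\sigma$-finite atomless abelian algebra $\cQ$ can absorb simultaneously the $\sigma$-finite remainders $\cR^x,\cR^y$ and the $L_\infty(0,\infty)$ produced by Proposition~\ref{partition}, so that $\cN^x$ and $\cN^y$ match the codomain and domain of the model map $\Phi\oplus W$; once this is arranged the rest is the bookkeeping of Theorem~\ref{stoinfi}.
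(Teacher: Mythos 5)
Your proposal is correct and follows essentially the same route as the paper's proof: reduce to the atomless case, place a non-$\sigma$-finite atomless abelian subalgebra in the corner $\cM_{{\bf 1}-s(x)\vee s(y)}$ and use Lemma~\ref{lemma:z} to absorb both the auxiliary $L_\infty(0,\infty)$ summand and the $\sigma$-finite leftover corners, then transport the normal doubly stochastic operator built from Proposition~\ref{partition} and the finite case back into $\cM$ via the conditional expectation and the trace-preserving isomorphisms. Your $\cQ$, $\cR^x$, $\cR^y$ are exactly the paper's $\cA$, $\cB_x$, $\cB_y$, and the final composition $\iota\circ U_x^{-1}\circ(\Phi\oplus W)\circ U_y\circ\cE$ is the map the paper constructs.
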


\begin{proof}
Without loss of generality, we may assume that $\cM$ is atomless \cite[Lemma 2.3.18]{LSZ}.
Note that $x$ and $y$ are $\tau$-compact.
For every $x\in \Omega (y)$,
there exists an abelian atomless von Neumann subalgebra $\cN_1^x$ of $s(x_+)\cM s(x_+)$
containing all spectral projections of $x_+$, and a $*$-isomorphism $V_1$ from $S(\cN_1^x,\tau)$ onto $S([0,\tau(s(x_+)),m)$ such that
$\lambda (V_1(f))  =\lambda (f)$ for every $f\in S(\cN_1^x,\tau)_h$  (see \cite[Lemma 1.3]{CKS}, see also \cite[Proposition 3.1]{BKS1}, \cite{CK} and  \cite[Lemma 4.1]{CS}).
Similarly, we have a $*$-isomorphism $V_2$ from $S(\cN_2^x,\tau)$ onto $S([0,\tau(s(x_-)),m)$ such that
$\lambda (V_2(f))  =\lambda (f)$ for every $f\in S(\cN_2^x,\tau)_h$,
where $\cN_1^x$ is  an abelian atomless von Neumann subalgebra  of $s(x_-)\cM s(x_-)$
containing all spectral projections of $x_-$.
We denote  $$V_x:= V_1 ^x \oplus V_2^x.$$

The same argument show that there exists such a trace-preserving $*$-isomorphism $V_y:  S(\cN_1^y\oplus \cN_2^y) \to S(0, \tau(s(y)))$,
where $\cN_1^y$ (resp. $\cN_2^y$) is an atomless commutative reduced  von Neumann subalgebra of $\cM_{s(y_+)}$ (resp. $\cM_{s(y_-)}$) containing all spectral projections of $y_+$ (resp. $y_-$).

Since $\cM$ is not $\sigma$-finite, it follows that there exists an  atomless non-$\sigma$-finite commutative von Neumann subalgebra $\cA$ of $\cM_{{\bf 1}- s(x)\vee s(y)}$.
It follows
 from Lemma~\ref{lemma:z} that
  $$\cA \simeq \cA \oplus L_\infty (0,a)  $$
  for any $0< a\le \infty $.

Note that   $s(x)\vee s(y)-s(y)$ is a $\sigma$-finite projection. 
 Let $\cB_y$ (resp. $\cB_x$) be a commutative atomless von Neumann subalgebra of $\cM_{s(x)\vee s(y) - s(y)}$ (resp. $\cM_{s(x)\vee s(y) - s(x)}$).
 We have $$\cA \oplus \cB_y \simeq \cA$$
 (resp. $\cA \oplus \cB_x \simeq \cA$).
There exists a trace-preserving conditional expectation $\cE:\cM\to \cN_1\oplus \cN_2\oplus \cA   \oplus \cB_y $ \cite[Proposition 2.1]{DDPS}.
Hence, there are trace-preserving isomorphisms $U_1$ and $U_2$ such that
\begin{align*}
  L_\infty (0, \tau(s(x))  ) \oplus  L_\infty (0,\infty )\oplus  \cA &\backsimeq_{U_1} \cN_1^x\oplus \cN_2^x \oplus \cA \oplus \cB_x \subset \cM
  \end{align*}
and
\begin{align*} L_\infty (0, \tau(s(y))  ) \oplus  L_\infty (0,\infty )\oplus  \cA  &\backsimeq_{U_2 } \cN_1^y\oplus \cN_2 ^y \oplus \cA  \oplus \cB_y \subset^\cE \cM
\end{align*}
where the restrictions of $U_1$ and $U_2$ coincide with $V_x$ and $V_y$.
Hence, without loss of generality, we may assume that
$y\in  L_1 (0,\tau(s(y)) ) $, $x\in  L_1 (0, \tau(s(x))  )$. The argument in Theorem \ref{stoinfi} infers that
 there exists a normal doubly stochastic operator $\varphi$ from  $L_\infty (0, \tau( s(y) )  ) \oplus L_\infty (0,\infty )$ to $L_\infty (0, \tau(s(x))  ) \oplus L_\infty (0,\infty ) $ whose $L_1$-extension  maps $y$ to $x$.
\end{proof}

\section{Extreme points of $\Omega (y)$: Luxemburg's problem in the infinite setting}\label{secCom}
In this section,
we consider Luxemburg's problem \cite[Problem 1]{Luxemburg} in the (noncommutative) infinite setting,
  characterizing  the extreme points of $\Omega(y)$, $y\in L_1(\cM,\tau)_h$ when $\cM$ is a semifinite von Neumann equipped with a semifinite infinite faithful normal trace $\tau$.
The main idea  steming  from \cite{DHS} still works for the infinite setting.
However,
the description of  extreme points of $\Omega(y)$ is much more complicated than that in the finite setting.
We show that the   case of an  infinite von Neumann algebra can be reduced into
 the case of at most countably many finite von Neumann algebras and we can apply the same idea used in   \cite{DHS}.

The theorem below is the main result in this section.
\begin{theorem}\label{th:ext}
Let $y\in L_1(\cM,\tau)_h$.
 Then,   $x\in L_1(\cM,\tau)$ is an extreme point of $\Omega (y)$ if and only if  for $x_+$ and $y_+$,  for any $t\in  (0,\infty)$, one of the following options holds:
       \begin{enumerate}
  \item[(i).]   $\lambda (t;x)=  \lambda(t; y)$;
  \item [(ii).]
   $\lambda(t;x) \ne \lambda(t;y)$ with  the spectral projection $ E^x \{\lambda (t;x)  \} $ being an atom in $\cM$ and $$\int_{ \{s;\lambda (s;x)=\lambda (t;x)\}}  \lambda (s;y)ds    =  \lambda(t ;x)     \tau(E^x (\{\lambda (t;x)\}))  .  $$
\end{enumerate}
and,   for $x_-$ and $y_-$, for any $t\in  (0,\infty)$,
one of the following options holds:
       \begin{enumerate}
  \item[(i).]   $\lambda (t;-x)=  \lambda(t; -y)$;
  \item [(ii).]
   $\lambda(t;-x) \ne \lambda(t;y)$ with  the spectral projection $ E^{-x} \{\lambda (t;-x)  \} $ being an atom in $\cM$ and $$\int_{ \{s;\lambda (s;-x)=\lambda (t;-x)\}}  \lambda (s;-y)ds    =  \lambda(t ;-x)     \tau(E^{-x} (\{\lambda (t;-x)\}))  .  $$
\end{enumerate}
\end{theorem}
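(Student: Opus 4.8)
The plan is to reduce Theorem~\ref{th:ext} to its commutative, finite-measure-space prototype, namely \cite[Theorem~1.1]{DHS} (recorded here as Theorem~\ref{th:f}), by two successive reductions, and to treat the two implications separately. First I would pass to the abelian von Neumann subalgebra $\cA\subseteq\cM$ generated by all spectral projections of $x$, with the restricted trace, so that $x\in L_1(\cA,\tau)_h$ and the atoms of $\cA$ lying under $s(|x|)$ are exactly the atoms of $\cM$ that can occur in option~(ii); let $\cE\colon\cM\to\cA$ be the trace-preserving normal conditional expectation (\cite[Proposition~2.1]{DDPS}), which is a normal doubly stochastic operator, so that $\cE(\Omega(y))\subseteq\Omega_\cA(y):=\Omega(y)\cap L_1(\cA,\tau)_h$ by Proposition~\ref{majtraceequ}. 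Since $\Omega_\cA(y)\subseteq\Omega(y)$, any extreme point of $\Omega(y)$ belonging to $L_1(\cA,\tau)$ is extreme in $\Omega_\cA(y)$, which yields the ``only if'' implication once the extreme points of the commutative set $\Omega_\cA(y)$ are known. For the ``if'' implication it then remains, given that $x$ satisfies (i)/(ii) and hence is extreme in $\Omega_\cA(y)$, to analyze a splitting $x=\tfrac12(x_1+x_2)$ with $x_i\in\Omega(y)$: applying $\cE$ forces $\cE x_1=\cE x_2=x$, and one must deduce $x_1=x_2=x$ from $\cE x_i=x$, $x_i\prec y$ and the conditions on $x$.

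To characterize the extreme points of $\Omega_\cA(y)$ I would separate the positive and negative data: the relation $z\prec y$ and conditions (i)/(ii) depend on $z$ only through $\lambda(z_+)$, $\lambda(z_-)$ and the atoms of $\cM$ below $s(|x|)$, and on $y$ only through $\lambda(y_\pm)$, the two halves being linked solely by the single constraint $\tau(z_+)-\tau(z_-)=\tau(y)$. Applying Proposition~\ref{partition} to the pairs $(\lambda(x_+),\lambda(x_-))$ and $(\lambda(y_+),\lambda(y_-))$ gives a countable partition into pairs of finite-measure blocks $(A_n,B_n)$, which I would choose to also respect the splitting of $\cA$ into its atomless part and its atoms and, crucially, the splitting of each half-line into the agreement set $\{t\colon\lambda(t;x_+)=\lambda(t;y_+)\}$ (and likewise for the negative parts) and its complement. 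On each block one is in the finite measure space setting of Theorem~\ref{th:f}(c)/(d), and the point is that extremality of $x$ in $\Omega_\cA(y)$ is equivalent to extremality of every block restriction in the corresponding finite orbit: a nontrivial splitting of one block extends by zero, while conversely, following the perturbation arguments of \cite{DHS}, a nontrivial splitting $x=\tfrac12(x_1+x_2)$ in $\Omega_\cA(y)$ produces a nonzero self-adjoint perturbation supported on a single level set of $x$ inside one block with $x\pm(\cdot)\in\Omega_\cA(y)$. Translating the finite-block criterion and collating over $n$, the agreement set contributes the option~(i) clauses and the atoms of $\cM$ in its complement contribute the option~(ii) clauses, the displayed averaging identity being precisely the block form of the DHS condition; the same perturbation construction, run in reverse, proves the ``only if'' implication.

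For the remaining step of the ``if'' implication, decompose $\mathbf 1=p_0+\sum_i e_i$, where the $e_i$ are the atoms of $\cM$ figuring in the option~(ii) clauses for $x_+$ or $x_-$ and $p_0$ is the complementary projection. On $p_0$ only option~(i) is available, so $\lambda(x_\pm)$ and $\lambda(y_\pm)$ agree on the corresponding regions; fixing a strictly convex $C^1$ function $g$ with $g(0)=0$ and at most linear growth (e.g. $g(t)=\sqrt{1+t^2}-1$, so $g(z)\in L_1(\cM,\tau)$ for all $z\in L_1(\cM,\tau)_h$), this equimeasurability gives $\tau(g(x))=\tau(g(y))$ on the $p_0$-part, while $x\prec x_i\prec y$ (the first via $\cE$ and Proposition~\ref{majtraceequ}, the second by hypothesis) together with Proposition~\ref{btod} squeezes $\tau(g(x))=\tau(g(x_i))=\tau(g(\cE x_i))$ there, whence the strict convexity of the functional $z\mapsto\tau(g(z))$ (Klein's inequality, modulo a routine truncation to $L_2$) forces $x_i=x$ on $p_0$. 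On each option~(ii) atom $e_i$, $\cE x_i=x$ already gives $e_ix_ie_i=e_ixe_i=\mu e_i$ with $\mu=\lambda(t;x_+)$ (say); propagating the averaging identity downward through the level sets of $x_+$ above $e_i$ shows that $\int_0^s\lambda(r;(x_i)_+)\,dr\le\int_0^s\lambda(r;y_+)\,dr$ is an equality at the upper endpoint of the $e_i$-level set, and the argument of \cite{DHS}, via the variational formula of Proposition~\ref{HN1}, then forces the off-diagonal corner $e_ix_i(\mathbf 1-e_i)$ to vanish. Hence $x_i=x$ and $x$ is extreme.

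The main obstacle is that, as Hiai observed, $\Omega(y)$ fails to be weakly compact when $\tau$ is infinite, so no Krein--Milman or compactness shortcut is available and every step must be carried out directly in the $L_1$-topology; in particular one cannot first reduce to ``nice'' operators $x$. Concretely, the two delicate points are: (1) the localization in the commutative reduction, i.e. splitting an arbitrary $x=\tfrac12(x_1+x_2)$ in $\Omega_\cA(y)$ into perturbations confined to single level sets of $x$ inside the finite blocks, which requires adapting the order-theoretic perturbation lemmas of \cite{DHS} to infinite measure; and (2) the off-diagonal rigidity on the option~(ii) atoms, where the genuine operator-level coupling between $x_+$ and $x_-$ and the loss of $\tau(x_\pm)=\tau(y_\pm)$ mean that the decoupled form of conditions (i)/(ii) must be recovered from the coupled majorization through the separated positive/negative model together with the tightness of $x_i\prec y$ at the level-set boundaries.
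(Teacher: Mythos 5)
Your overall architecture (localize to the level sets of $x$, use DHS-type perturbations for necessity, and control the off-diagonal corners of $x_1,x_2$ via the variational formula of Proposition~\ref{HN1} for sufficiency) is close to the paper's, but the route through the commutative set $\Omega_\cA(y)$ has a genuine gap in the ``only if'' direction. Extremality of $x$ in $\Omega(y)$ does imply extremality in $\Omega_\cA(y)$, but the latter can only ever certify that the level projections $E^x\{\lambda(t;x)\}$ are atoms \emph{of $\cA$}, i.e.\ minimal spectral projections of $x$ --- not atoms of $\cM$, which is what condition (ii) asserts. A minimal spectral projection of $x$ of trace $2$ in a II$_\infty$ factor is an atom of $\cA$, and $x$ can perfectly well be extreme in $\Omega_\cA(y)$, yet fail to be extreme in $\Omega(y)$: one splits $E^x\{\lambda\}=q_1+q_2$ inside $\cM$ and perturbs by $\delta(q_1\tau(q_2)-q_2\tau(q_1))$. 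This perturbation lives outside $\cA$, so no analysis of $\Omega_\cA(y)$ alone can produce the ``atom in $\cM$'' clause; the necessity argument must be run in $\cM$ itself, which is what the paper's Lemmas~\ref{counter example 1}--\ref{counter example 3} do. A second missing step on the necessity side: your decoupled positive/negative formulation presupposes $\tau(x_+)=\tau(y_+)$ and $\tau(x_-)=\tau(y_-)$, but majorization only gives $\tau(x)=\tau(y)$, and ruling out the mass-trading perturbation (adding $\delta\bigl(P_1-\tfrac{\tau(P_1)}{\tau(P_2)}P_2\bigr)$ with $P_1,P_2\le \mathbf 1-s(x)$, which is available precisely because $\tau$ is infinite) is a separate argument --- the paper's Lemma~\ref{lemma:exeq} --- that your block decomposition via Proposition~\ref{partition} does not supply.

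On the ``if'' direction your plan is essentially sound, but the strict-convexity/Klein step on $p_0$ does not buy what you want: to localize $\tau(g(\cdot))$ to the $p_0$-corner you already need $p_0$ to commute with $x_1$ and $x_2$, and establishing that commutation (equivalently, that the relevant spectral projections of $x$ are sandwiched between spectral projections of the $x_i$, which is what kills the off-diagonal corners) is exactly the hard content of the paper's Corollary~\ref{corcons}. Once that is in hand, the identities $\lambda((x_i)_\pm)=\lambda(x_\pm)$ together with Proposition~\ref{noncomm KPS lemma} finish the proof with no convexity input, so the convexity detour is redundant rather than a genuine shortcut around the step you yourself flag as delicate.
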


The following lemma allows us to reduce the problem to the setting of the positive core of a finite von Neumann algebra.
\begin{lemma}\label{lemma:exeq}
Let $y\in L_1(\cM,\tau)_h $.
If $x\in L_1(\cM,\tau)_h$ is an extreme point of $\Omega(y)$,
then
$\tau(x_+)=\tau(y_+)$
and $\tau(x_-)=\tau(y_-)$.
\end{lemma}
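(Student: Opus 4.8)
The plan is to prove the contrapositive: if $\tau(x_+)<\tau(y_+)$ (equivalently, by the trace constraint, $\tau(x_-)<\tau(y_-)$), then $x$ is a proper midpoint of two distinct elements of $\Omega(y)$, hence not extreme. First the bookkeeping. From $x\prec y$ we have $x_+\prec\prec y_+$ and $x_-\prec\prec y_-$; since the rearrangements $\mu(x_\pm),\mu(y_\pm)$ all lie in $L_1(0,\infty)$, letting the submajorization parameter tend to $\infty$ gives $\tau(x_+)\le\tau(y_+)$ and $\tau(x_-)\le\tau(y_-)$, and combined with $\tau(x)=\tau(y)$ this forces
\[\tau(y_+)-\tau(x_+)=\tau(y_-)-\tau(x_-)=:\delta\ge 0.\]
So it suffices to rule out $\delta>0$; assume $\delta>0$. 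By Proposition~\ref{treq}, $z\in\Omega(y)$ is equivalent to $\tau(z)=\tau(y)$ together with $\tau((z-r)_+)\le\tau((y-r)_+)$ and $\tau((-z-r)_+)\le\tau((-y-r)_+)$ for all $r\ge 0$; since $\tau((z-r)_+)=\int_0^\infty(\mu(s;z_+)-r)_+\,ds$ tends to $\tau(z_+)$ as $r\downarrow 0$ (monotone convergence), and similarly $\tau((-z-r)_+)\to\tau(z_-)$, there is $\rho>0$ with
\[\tau((y-r)_+)-\tau((x-r)_+)>\tfrac{\delta}{2}\quad\text{and}\quad\tau((-y-r)_+)-\tau((-x-r)_+)>\tfrac{\delta}{2}\qquad(0<r\le\rho).\]

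The key point is that this slack is felt only near $r=0$, so the perturbation must leave the large part of the spectrum of $x$ untouched. Put $e:=e^x(-\rho/2,\rho/2)$. Because $x$ is $\tau$-compact and $\tau(\mathbf{1})=\infty$, the projections $e^x(\rho/2,\infty)$ and $e^x(-\infty,-\rho/2]$ have finite trace, so $\tau(e)=\infty$; by semifiniteness of $\tau$ on $e\cM e$ pick disjoint nonzero projections $p,q\le e$ of finite trace, and set
\[h:=\varepsilon\Big(\tfrac{1}{\tau(p)}\,p-\tfrac{1}{\tau(q)}\,q\Big),\]
with $\varepsilon>0$ chosen so small that $\|h\|_\infty\le\rho/2$ and $\|h\|_1=2\varepsilon<\delta/2$. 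Then $h$ is a nonzero self-adjoint element of $\cM\cap L_1(\cM,\tau)$ with $eh=he=h$ and $\tau(h)=0$, and $x\pm h\in L_1(\cM,\tau)_h$.

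It remains to verify $x\pm h\in\Omega(y)$ through Proposition~\ref{treq}. First, $\tau(x\pm h)=\tau(x)\pm\tau(h)=\tau(y)$. Next, $e$ is a spectral projection of $x$, so it commutes with $x$ and with $h$, and relative to $\cH=e\cH\oplus(\mathbf{1}-e)\cH$ we have $x\pm h=(exe\pm h)\oplus(\mathbf{1}-e)x(\mathbf{1}-e)$, where $\|exe\pm h\|_\infty\le\rho$ while $(\mathbf{1}-e)x(\mathbf{1}-e)$ has spectrum outside $(-\rho/2,\rho/2)$. Hence for every $r\ge\rho$ one gets $(x\pm h-r)_+=((\mathbf{1}-e)x(\mathbf{1}-e)-r)_+=(x-r)_+$ and, likewise, $(-x\mp h-r)_+=(-x-r)_+$, so the two required inequalities hold (with equality to the $x$-side) for $r\ge\rho$. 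For $0\le r\le\rho$ I would use the subadditivity $\tau((a+b)_+)\le\tau(a_+)+\tau(b_+)$ (a limiting case of Proposition~\ref{2.5} as the parameter tends to $\infty$, or directly $\tau((a+b)_+)=\tau(p_0ap_0)+\tau(p_0bp_0)\le\tau(a_+)+\tau(b_+)$ with $p_0$ the support projection of $(a+b)_+$) to obtain
\[\tau((x\pm h-r)_+)\le\tau((x-r)_+)+\|h\|_1<\tau((x-r)_+)+\tfrac{\delta}{2}\le\tau((y-r)_+),\]
and symmetrically $\tau((-x\mp h-r)_+)\le\tau((-y-r)_+)$. Therefore $x\pm h\prec y$; since $(x+h)-(x-h)=2h\neq 0$, $x$ is not an extreme point of $\Omega(y)$, contradicting the hypothesis. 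Hence $\delta=0$, i.e.\ $\tau(x_+)=\tau(y_+)$ and $\tau(x_-)=\tau(y_-)$.

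I expect the only substantive step to be the choice of perturbation: one must realise that the slack $\delta$ is confined to small $r$, so $h$ must not move the eigenvalues of $x$ exceeding $\rho/2$ in absolute value, and the freedom to find a bounded, trace-zero self-adjoint $h$ doing so comes exactly from the fact that the central band $e^x(-\rho/2,\rho/2)$ of the spectrum carries infinite trace — which has no finite-trace analogue. Everything else is routine, with the mild convenience that the crude subadditivity of $z\mapsto\tau(z_+)$ suffices in place of the sharper estimate $\|(a-r)_+-(b-r)_+\|_1\le\|a-b\|_1$.
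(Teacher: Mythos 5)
Your proof is correct, and it takes a genuinely different route from the paper's. The paper also argues by contradiction from $\tau(x_+)<\tau(y_+)$, but it places the perturbation \emph{outside} the support of $x$: it first carries out a structural analysis of $\lambda(x_\pm)$ (using Lemmas \ref{counter example 2} and \ref{counter example 3} to show these are step functions with finite support and strictly positive minima $m_1,m_2$ there), and only then perturbs by $\pm\delta\bigl(P_1-\tfrac{\tau(P_1)}{\tau(P_2)}P_2\bigr)$ with $P_1,P_2\le\mathbf 1-s(x)$, taking $\delta$ below $m_1,m_2$ and below the slack constants $C_1,\dots,C_4$ so that the new eigenvalue sits at the bottom of the spectral scale and majorization can be checked via concavity of $t\mapsto\int_0^t\lambda(s;y)\,ds$. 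You instead hide the trace-zero perturbation $h$ inside the infinite-trace spectral band $e^x(-\rho/2,\rho/2)$ and replace the sharp eigenvalue bookkeeping by the crude subadditivity $\tau((a+b)_+)\le\tau(a_+)+\tau(b_+)$, whose cost $\|h\|_1$ is absorbed by the uniform slack $\delta/2$ in the criterion of Proposition \ref{treq} for $0\le r\le\rho$, while for $r\ge\rho$ the block decomposition along $e$ gives exact agreement $(x\pm h-r)_+=(x-r)_+$. This bypasses the step-function analysis and the case distinctions entirely and is noticeably shorter; what the paper's finer argument additionally records is structural information about $\lambda(x_\pm)$ that is reused in the proof of Theorem \ref{th:ext}, which your argument does not provide (nor does the lemma require it). Both proofs exploit the same two features of the infinite-trace setting: Proposition \ref{treq} as a checkable membership criterion for $\Omega(y)$, and the availability of infinite trace in which to place a bounded self-adjoint trace-zero perturbation.
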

\begin{proof}
Assume by contradiction  that $\tau(x_+)<\tau(y_+)$.
Since $\tau(x_+)-\tau(x_-)=\tau(x)=\tau(y)=\tau(y_+)-\tau(y_-)$, it follows that $\tau(x_-)<\tau(y_-)$.
Let
 $$
 A:=\left\{t\in (0,\infty ]:  \int_0^{t} \lambda (t;y_+ )dt=\int_0^{t} \lambda (t;x_+)dt\right\} .
 $$
 If $\sup A =\infty$,
then $\tau(y_+)=\tau(x_+)$.
Hence, $\sup A<\infty $.
We define $t': =\sup A$.
In particular,  $t'\in A$.
Similarly, we define $B:=\left\{t:  \int_0^{t} \lambda (t;y_- )dt=\int_0^{t} \lambda (t;x_-)dt\right\}$ and $B\ni s':= \sup B<\infty $.
For any $t> t'$, we have
$$\int_0^{t} \lambda (t;y_+ )dt> \int_0^{t} \lambda (t;x_+)dt $$
and for any
$t>s'$, we have
$$\int_0^{t} \lambda (t;y_- )dt> \int_0^{t} \lambda (t;x_- )dt . $$
By Lemma \ref{counter example 3}, $\lambda (x_\pm)$ is a step function on $(t',\infty)$ (resp. $(s',\infty )$).
Assume that $\lambda(x_+)>0$ on $\bR_+$.
Since $x$ is $\tau$-compact, it follows $\lambda(x_+)$ decreases to $0$ at infinity.
By Lemma \ref{counter example 3}, $x$ is not an extreme point.
Hence, $\lambda (x_+)$ has a finite support.
The same argument shows that $\lambda (x_-)$ has a finite support.


Recall that
$$\int_0^{t'} \lambda (t;y_+ )dt=\int_0^{t'} \lambda (t;x_+)dt, $$
$$\int_0^{s'} \lambda (t;y_- )dt=\int_0^{s'} \lambda (t;x_-)dt .$$
We claim that
\begin{align}\label{yxright}
0<\lambda (t'-;y_+ ) \le  \lambda (t'-;x_+)
\end{align}
and
$$0<  \lambda (s'-;y_- ) \le  \lambda (s'-;x_-) .$$
Otherwise, $ \lambda (t'-;y_+ ) >  \lambda (t'-;x_+) $, i.e., there exists $\delta>0$ such that $ \lambda (y_+ ) >  \lambda (x_+) $ on $(t'-\delta, t')$.
Since $x\prec y$, it follows that $\int_0^{t'} \lambda(t;x_+)dt< \int_0^{t'} \lambda (t;y_+)dt$,
which is a contradiction with $t'\in A$.
The case for $x_-$ and $y_-$ follows from the same argument.

We claim that for any $\varepsilon>0$,
\begin{align}\label{t'x+}
\lambda (t'+\varepsilon; x_+)< \lambda (t'-;x_+)
\end{align}
and
$$\lambda (s'+\varepsilon; x_-)< \lambda (s'-;x_-).$$
Otherwise,
$\lambda (t'+\varepsilon; x_+)=\lambda (t'-;x_+) \stackrel{\eqref{yxright}}{\ge}  \lambda (t'-;y_+ ) \ge \lambda (t'+\varepsilon;y_+ ) $ for some $\varepsilon>0$.
That is,
$$\int_{t'}^{t'+\varepsilon} \lambda (s;x_+)ds \ge  \int_{t'}^{t'+\varepsilon} \lambda (s;y_+)ds.  $$
This contradicts with $t'=\sup A$ (if we take ``$=$'' in the above inequality) or $x\prec y$ (if we take ``$>$'' in the above inequality).
The case for $x_-$ follows from the same argument.

Let's consider $\lambda (t';x_+)$.
There are three possible situations:
\begin{enumerate}
  \item $\lambda (t';x_+)=0$;
  \item $\lambda (t';x_+    )>0$ and $\lambda (t'+\varepsilon;x_+)=\lambda (t';x_+    )$ for some $\varepsilon>0$;
  \item $\lambda (t';x_+    )>0$ and $\lambda (t'+\varepsilon;x_+)<\lambda (t';x_+    )$ for any $\varepsilon>0$.
\end{enumerate}
By Lemma \ref{counter example 2} and the right-continuity of $\lambda (x_+)$, situation (3) is impossible.
Consider situation (2).
We claim that  $E^{x_+} (0, \delta)=0$ for some sufficiently small  $\delta >0$.
Otherwise,
$\lambda (x_+)$ satisfies the conditions in Lemma \ref{counter example 3}.
That is, $x$ is not an extreme point of $\Omega (y)$.
Hence,
$m_1:= \inf \{\lambda (x_+)>0\} >0$.
Similarly, we obtain that
$m_2:= \inf \{\lambda (x_-)>0\} >0$.
If situation (1) is true, then, by \eqref{t'x+}, we can define $m_1>0$ by $\lambda (t'-;x_+)$ (resp. $m_2:= \lambda (s'-;x_-)>0$).

Recall that $s(x)$ is $\tau$-finite and the trace  $\tau$ is infinite.
Take any non-zero $\tau$-finite projections $P_1,P_2\le 1-s(x)$ and $P_1\perp P_2$.
Without loss of generality, we may assume that $\tau(P_1)\le \tau(P_2)$.
Let
$$C_1 := \frac{1}{\tau(P_1)}\left(\int_{0}^{\tau(s(x_+))+\tau(P_1)}\lambda (t;y_+)dt- \int_{0}^{\tau(s(x_+)) }\lambda (t;x_+)dt\right)>0,$$
$$C_2 := \frac{1}{\tau(P_1)} \left(
\int_{0}^{\tau(s(x_-))+\tau(P_1)}\lambda (t;y_-)dt -\int_{0}^{\tau(s(x_-)) }\lambda (t;x_-)dt\right)>0  ,$$
$$C_3 := \frac{1}{\tau(P_2)}\left(\int_{0}^{\tau(s(x_+))+\tau(P_2)}\lambda (t;y_+)dt- \int_{0}^{\tau(s(x_+)) }\lambda (t;x_+)dt\right)>0,$$
and
$$C_4 := \frac{1}{\tau(P_2)} \left(
\int_{0}^{\tau(s(x_-))+\tau(P_2)}\lambda (t;y_-)dt -\int_{0}^{\tau(s(x_-)) }\lambda (t;x_-)dt \right)>0 . $$
There exists a $\delta >0$ such that
$$\delta < \min\left\{ C_1,C_2,C_3,C_4, m_1,m_2  \right\}$$
We define
$$
x_1 := x + \delta \cdot \left(P_1 - \frac{\tau(P_1)}{\tau(P_2)} P_2\right)
 $$
and
$$
x_2:=x-\delta \cdot \left( P_1 - \frac{\tau(P_1)}{\tau(P_2)} P_2\right ).
  $$
Clearly, $\tau(x_1)=\tau(x_2)=\tau(x)=\tau(y)$ and  and $\frac{x_1+x_2}{2}=x$.
Note that
$t\mapsto \int_0^t\lambda(s;y)ds $ is a  concave function  and
$$ \int_0^{\tau(s(x_+))}\lambda(s;y)ds \ge   \int_0^{\tau(s_+)} \lambda(s;(x_+) )ds =\int_0^{\tau(s_+)} \lambda(s;(x_1) )ds ,$$
$$ \int_0^{\tau(s(x_+))+\tau(P_1)}\lambda(s;y)ds >  \int_0^{\tau(s(x_+))+\tau(P_1)} \lambda(s;x_1)ds=\int_0^{\tau(s_+)} \lambda(s;x_1 )ds  +\delta \tau(P_1).$$
Moreover, by spectral theorem, we obtain that  $\lambda((x_1)_+)$ is a positive constant on $[ \tau(s(x_+)),\tau(s(x_+))+\tau(P_1))$ and vanishes on $[\tau(s(x_+))+\tau(P_1),\infty)$.
Hence, $\lambda ((x_1)_+)\prec \prec \lambda (y)$.
Arguing similarly to $(x_1)_-$, $(x_2)_+$ and $(x_2)_-$,
we obtain that $ x_1,x_2\prec y$,
which shows that $x$ is not an extreme point of $\Omega(y) $.
Hence, $\tau(x_+)=\tau(y_+)$ and $\tau(x_-)=\tau(y_-)$.
\end{proof}

Now, we present the proof of Theorem \ref{th:ext}.

\begin{proof}[Proof of Theorem \ref{th:ext}]
``$ \Rightarrow$''.  Assume that $x\in \text{extr}(\Omega(y))$.
By Lemma \ref{lemma:exeq}, we obtain that $\tau(x_+)=\tau(y_+)$ and $\tau(x_-)=\tau(y_-)$.
It suffices to prove the case for $x_+$ and $y_+$.
Hence, we may always assume that
$$x,y\ge 0 .$$

We set
$$A=\left\{s\in (0,\infty):\int_0^s\lambda (t;y)-\lambda (t; x)dt>0\right\}.$$
Since $\lambda (y),\lambda (x)\in L_1(0,\infty )$, it follows that the mapping $f: s\mapsto \int_0^s\lambda (t;y)-\lambda (t;x)dt$ is continuous.
Moreover, noting that  $f(0)=f(\infty) =0$,
  we infer that $A$ is an open set, i.e.,  $A=\cup_i(a_i,b_i)$, where
$a_i,b_i\not\in A$.  
By Lemma \ref{counter example 2}, $\lambda(x)$ is a step function on $(a_i,b_i )$.
Moreover,
we claim that
$$b_i<\infty$$
for every $i$.
Otherwise, $(a_i,b_{i})=(a_i,\infty)$.
By 
 $\int_0^\infty \lambda (t;y)-\lambda (t; x)dt=0  $  and $\int_0^s\lambda (t;y)-\lambda (t; x)dt>0  $, $s\in (a_i ,\infty )$,
we obtain that   $\lambda (x)>0$ on $(0,\infty)$.
Since $x\in L_1(\cM,\tau)$ (therefore, $\tau$-compact), it follows that $\lambda (x) $ decreases to $0$ on $(a,\infty)$.
By Lemma~\ref{counter example 3}, we obtain that $x$ is not an extreme point.
Hence, $(a_i, b_i)$ is finite for any $i$.
Now, the statement of the theorem follows by the same argument in \cite[p.20--24]{DHS}.
Indeed, the case when $\lambda(x) $ takes only two   values or more  on $(a_i,b_i)$ can be inferred by Lemma \ref{counter example 3} directly.

 ``$\Leftarrow$'' The proof of this part is also similar with that in \cite{DHS}.
 However, there are some technical details which are somewhat different from that in \cite{DHS}.
 We provide the full proof below and technical lemmas in Appendix \ref{S:A}.

Let $y\in L_1(\cM,\tau)$.
Let  $x,x_1,x_2\in \Omega (y)$ with $x=\frac{x_1+x_2}{2}$.
 Assume that $x$ satisfies that  for   every $t\in (0,\infty )$, one of the followings holds:
  \begin{enumerate}
  \item[(i).]   $\lambda (t;x)=  \lambda(t; y)$;
  \item [(ii).]
   $\lambda(t;x) \ne \lambda(t;y)$ with  the spectral projection $ E^x \{\lambda (t;x)  \} $ being an atom in $\cM$ and $$\int_{ \{s;\lambda (s;x)=\lambda (t;x)\}}  \lambda (s;y)ds    =  \lambda(t ;x)     \tau(E^x (\{\lambda (t;x)\}))  .  $$
\end{enumerate}
and
one of the following options holds:
       \begin{enumerate}
  \item[(i).]   $\lambda (t;-x)=  \lambda(t; -y)$;
  \item [(ii).]
   $\lambda(t;-x) \ne \lambda(t;y)$ with  the spectral projection $ E^{-x} \{\lambda (t;-x)  \} $ being an atom in $\cM$ and $$\int_{ \{s;\lambda (s;-x)=\lambda (t;-x)\}}  \lambda (s;-y)ds    =  \lambda(t ;-x)     \tau(E^{-x} (\{\lambda (t;-x)\}))  . $$
\end{enumerate}
We claim that   $x_1=x_2=x$, i.e., $x$ is an extreme point of $\Omega (y)$.

For any $t$ such that $\lambda(t;y) \ne \lambda (t;x)$, we denote $[t_1,t_2) =\{s : \lambda (s;x) =\lambda (t;x )\}$, $t_1<t_2$.
In particular, we have
$$\int_0^{t_1}\lambda (s;y)ds =\int_0^{t_1}\lambda (s;x)ds $$
and
$$\int_0^{t_2}\lambda (s;y)ds =\int_0^{t_2 }\lambda (s;x)ds .$$
Since   $x_1,x_2\in \Omega(y)$ and
 \begin{align*}
 \int_0^{t_1} \lambda (s;2x_+)ds & \quad = \int_0^{t_1} \lambda (s;(x_1+x_2)_+)ds \\
 &\stackrel{Prop. \ref{2.5}}{\le}   \int_0^{t_1} \lambda (s;(x_1)_+)ds+\int_0^{t_1} \lambda (s;(x_2)_+)ds\\
 & \quad \le 2 \int_0^{t_1}\lambda (s;y_+)ds,
 \end{align*}
it follows that
\begin{align}\label{***}  \int_0^{t_1} \lambda (s;x_+)ds =   \int_0^{t_1} \lambda (s;(x_1)_+)ds= \int_0^{t_1} \lambda (s;(x_2)_+)ds = \int_0^{t_1}\lambda (s;y_+)ds.  \end{align}
The same argument with $t_1$ replaced with $t_2$ yields that
\begin{align}\label{****}  \int_0^{t_2} \lambda (s;x_+)ds =   \int_0^{t_2} \lambda (s;(x_1)_+)ds= \int_0^{t_2} \lambda (s;(x_2)_+)ds = \int_0^{t_2} \lambda (s;y_+)ds.  \end{align}
Let $e_1 := E^{x}(\lambda (t_1 ;x_+),\infty)$ and  $e_2:= E^{x}[ \lambda ( t_1 ;x_+),\infty)  $.
In particular, $e_2-e_1 =E^x\{\lambda (t_1;x_+)\}$.
Observe that $\tau(e_1)=t_1$ and $\tau(e_2)=t_2$ (due to the assumption that $[t_1,t_2)=\{s : \lambda (s;x_+) =\lambda (t;x_+ )\} $).
By Proposition \ref{HN1} and the definition of spectral scales $\lambda (x)$, we have
\begin{align*}
 2 \int_0^{t_1} \lambda (s;x_+)ds &\quad = \tau(2x_+ e_1) =\tau((x_1+x_2)_+e_1) = \tau(e_1E^x(0,\infty ) (x_1+x_2) E^x(0,\infty )    e_1  )\\
& \quad \le  \tau(e_1E^x(0,\infty ) ((x_1)_++(x_2)_+) E^x(0,\infty )    e_1  )\\
&\quad \le  \tau(e_1 (x_1)_+    e_1  )+ \tau(e_1 (x_2)_+    e_1  )\\
  &\stackrel{Prop. \ref{HN1}}{\le}   \int_0^{t_1} \lambda (s;(x_1)_+)ds+\int_0^{t_1} \lambda (s;(x_2)_+)ds \stackrel{\eqref{***}}{=}2\int_0^{t_1}  \lambda (s;x_+)ds .
 \end{align*}
 We obtain that $\tau(x_1e_1)=\int_0^{t_1} \lambda (s;x_1)ds
=\int_0^{t_1} \lambda (s;x_2)ds = \tau(x_2e_1 )   $.
By Corollary~\ref{corcons},
 we have
 $$ E^{x_1}(\lambda (t_1;x_1),\infty ) \le  e_1  \le E^{x_1}[\lambda (t_1;x_1),\infty )$$ and
 $$ E^{x_2}(\lambda (t_1 ;x_2),\infty ) \le  e_1  \le E^{x_2}[\lambda (t_1;x_2),\infty ) .$$
 Similar argument   with $t_1$ replaced with $t_2$ yields that
  $$ E^{x_1}(\lambda (t_2;x_1),\infty ) \le  e_2 \le E^{x_1}[\lambda (t_2;x_1),\infty )$$
  and
   $$ E^{x_2}(\lambda (t_2;x_2),\infty ) \le  e_2  \le E^{x_2}[\lambda (t_2 ;x_2),\infty ) .$$
 In particular, $e_1=E^{x_1}(\lambda (t_1;x_1),\infty )  + q$ for some subprojection $q$ of $E^{x_1}\{\lambda (t_1;x_1)\}$.
 Since $q$ commutes with  $E^{x_1}\{\lambda (t_1;x_1)\}$, it follows that $q $ commutes with any spectral projection of $x_1$. Hence, $e_1$ commutes with $x_1$.
   The same argument implies that both
   $e_1$ and $e_2$ commute with $x_1$ and with  $x_2$. 
Moreover,
 the atom $  e:= e_2-e_1\in \cP(\cM)$ satisfies that
 $$ e   \le E^{x_1} [\lambda (t_2;x_1), \lambda (t_1;x_1)]
  \mbox{ and }
   e\le  E^{x_2} [\lambda (t_2;x_2),\lambda (t_1;x_2) ] . $$
By the spectral theorem, 
 $ \lambda (x_1e_1) =\lambda (x_1)$  and $ \lambda (x_2e_1) =\lambda (x_2)$ on $(0,t_1)$ (see  \eqref{2.2}).
 On the other hand,
 $$\lambda (t; x_1e ) = \lambda (t; x_1e_2 e ) \stackrel{\eqref{2.3}}{=} \lambda (t+t_1; x_1e_2) \stackrel{\eqref{2.2}}{=}\lambda (t+t_1; x_1)$$
 and $$\lambda (t; x_2e ) = \lambda (t; x_2e_2 e ) \stackrel{\eqref{2.3}}{=}  \lambda (t+t_1; x_2e_2 ) \stackrel{\eqref{2.2}}{=} \lambda (t+t_1; x_2) $$
 for all $t\in [0,t_2-t_1)$.
 Since $e $ is an atom, it follows that $\lambda _1 :=\lambda (t; x_1e ) = \lambda (t+t_1; x_1)$
 and $\lambda _2 :=\lambda (t; x_2e ) = \lambda (t+t_1; x_2)$ for every $t\in [0,t_2-t_1)$.
Combining \eqref{***} and \eqref{****}, we have
   \begin{align}\label{t12} \int_{t_1}^{t_2} \lambda (s;x)ds& =    \int_{t_1}^{t_2} \lambda (s;x_1)ds =\lambda _1 (t_2-t_1)\nonumber\\
   &=  \int_{t_1}^{t_2} \lambda (s;x_2)ds =  \lambda _2  (t_2-t_1)= \int_{t_1}^{t_2} \lambda (s;y)ds.
   \end{align}
Hence,  $\lambda_1 =\lambda _2 = \lambda (t_1 ;x) $.

Let
$A=\{ s: \lambda (s;x)=\lambda (t;x) \mbox{ for some $t$ such that }~\lambda (t;x)\ne \lambda (t;y)\}$.
Note that \eqref{t12} holds any interval $[t_1,t_2):=\left\{ s: \lambda (s;x)=\lambda (t;x) \right\}$ for some $t$ such that $\lambda (t;x)\ne \lambda (t;y)$.
For any $t\in [0,\infty )\setminus A$, we have
\begin{align*}
\int_0^t \lambda (s;2x_+)ds \stackrel{Prop. \ref{2.5}}\le \int_0^t \lambda (s;(x_1)_+ )ds  +\int_0^t \lambda (s;(x_2)_+ )ds
&\le \int_0^t \lambda (s;2 y_+)ds \\
&\stackrel{(i)}{=}\int_0^t \lambda (s;2 x_+)ds .
\end{align*}
Hence,
\begin{align*}
\int_0^t \lambda (s;2x_+)\chi_{[0,\infty)\setminus A}ds & \le \int_0^t \lambda (s;(x_1)_+ )\chi_{[0,\infty)\setminus A}ds  +\int_0^t \lambda (s;(x_2)_+ )\chi_{[0,\infty)\setminus A}ds
 \\
&\le \int_0^t \lambda (s;2 y_+)\chi_{[0,\infty)\setminus A}ds=\int_0^t \lambda (s;2 x_+)\chi_{[0,\infty)\setminus A}ds ,
\end{align*}
which
implies that $\lambda (y)\chi_{[0,\infty)\setminus A}=\lambda (x)\chi_{[0,\infty)\setminus A} =\lambda (x_1)\chi_{[0,\infty)\setminus A}=\lambda(x_2)\chi_{[0,\infty)\setminus A} $ a.e..
Hence, by right-continuity,  $\lambda (x_+)=\lambda ((x_1)_+)=\lambda ((x_2)_+)$.

The same argument shows that
$\lambda ((x_1)_-)=\lambda ((x_2)_-)=\lambda (x_-)$.
By Proposition~\ref{noncomm KPS lemma}, we obtain that
 $x_1=x_2=x$.
That is, $x$ is an extreme point of $\Omega (y)$.
\end{proof}

\appendix
 \section{Technical results}\label{S:A}
Throughout this appendix, we always assume that   $\cM$ is  a semifinite von Neumann algebra equipped with a semifinite infinite faithful normal trace $\tau$.
Some of the results in this section are well-known for positive operators (see e.g. \cite{DPS,HSZ,CKSa,CS}).
Main ideas used in this section come  from \cite{DHS}.
However,
  dealing with technical obstacles in the infinite setting requires additional care.

The following is a noncommutative   analogue of Ryff's Proposition stated in \cite{Ryff}.
The case for finite von Neumann algebras can be found in \cite{DHS}.
One should note that $\lambda (t;x)=0$ for some $t<\infty$ does  not implies that $E^x\{0\}$ is not trivial.
\begin{lem}\label{counter example 1} Let $y  \in L_1(\cM, \tau)_{h}.$ If $x \in\Omega(y)=\{x\in L_1(\mathcal{M},\tau)_h:\ x\prec y \}$ satisfies that
\begin{enumerate}[{\rm (i)}]
  \item $0\le  \lambda (s_{i+1};x_+ )< \lambda(s_i;x_+)$ (or $0\le  \lambda (s_{i+1};x_-)< \lambda(s_i;x_-)$) for some $0<s_i<s_{i+1}<\infty,$ $i=1,2,3$,
  \item $ \int_0^{s_1}\lambda (t;x_+  )dt+\lambda (s_1;x_+ )(s-s_1) \le \int_0^{s}\lambda (t; y_+)dt ,$
      (or $ \int_0^{s_1}\lambda (t;x_-  )dt+\lambda (s_1;x_- )(s-s_1) \le \int_0^{s}\lambda (t; y_-)dt $) for all $s\in[s_1,s_4],$
\end{enumerate}
then $x \not\in\emph{extr}(\Omega(y)).$
\end{lem}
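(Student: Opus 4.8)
The plan is to adapt the perturbation argument of \cite{DHS}: I would produce a self-adjoint $z\in L_1(\cM,\tau)$ with $z\neq0$ and $x\pm z\in\Omega(y)$, so that $x=\tfrac12\big((x+z)+(x-z)\big)$ forces $x\notin\extr(\Omega(y))$. First I would reduce to the positive case. If the hypotheses (i)--(ii) hold for $x_-,y_-$, apply everything below to $(-x,-y)$: this interchanges positive and negative parts, maps $\Omega(y)$ bijectively onto $\Omega(-y)=-\Omega(y)$, and preserves extreme points, so it suffices to treat the case where (i)--(ii) are the statements for $x_+$ and $y_+$. Write $c_i:=\lambda(s_i;x_+)=\mu(s_i;x_+)$; thus $c_1>c_2>c_3>c_4\ge0$.

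Construction of $z$. The key point is that $\mu(x_+)$ attains the two \emph{distinct} values $c_2,c_3$, hence for all small $\delta>0$ the spectral projections $p_1:=E^{x_+}(c_2-\delta,c_2]$ and $p_2:=E^{x_+}(c_3-\delta,c_3]$ are nonzero and $\tau$-finite: for instance $\tau(p_1)=d_{x_+}(c_2-\delta)-d_{x_+}(c_2)>0$ since $d_{x_+}(c)>s_2$ for $c<c_2$ while $d_{x_+}(c_2)\le s_2$, and $d_{x_+}(c_2-\delta)<\infty$ by $\tau$-compactness of $x_+$. Fixing $\delta<\tfrac13\min\{c_2-c_3,\;c_3-c_4\}$ yields $p_1\perp p_2$, $p_1\vee p_2\le E^{x_+}(c_4,c_1)$, and a strict gap: every eigenvalue of $x_+p_1$ exceeds every eigenvalue of $x_+p_2$. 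I would then choose small $\epsilon_1,\epsilon_2>0$ with $\epsilon_1\tau(p_1)=\epsilon_2\tau(p_2)$ and $\epsilon_1+\epsilon_2<c_2-c_3-\delta$, and set $z:=\epsilon_2p_2-\epsilon_1p_1$. Then $z=z^*\neq0$ commutes with $x$, $\tau(z)=0$, and for $\epsilon_i$ small enough $x_+\pm z\ge0$ with spectrum bounded away from $0$; since $p_1\vee p_2\le E^{x_+}(0,\infty)$ this gives $(x\pm z)_+=x_+\pm z$ and $(x\pm z)_-=x_-$, whence $(x\pm z)_-\prec\prec y_-$ and $\tau(x\pm z)=\tau(x)=\tau(y)$. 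It remains to verify $(x\pm z)_+=x_+\pm z\prec\prec y_+$.

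For $x_++z=x_+-\epsilon_1p_1+\epsilon_2p_2$ — which lowers the higher slice $p_1$ and raises the lower slice $p_2$ while preserving their order — I would examine $g(t):=\tau\big((x_++z-t)_+\big)-\tau\big((x_+-t)_+\big)$ for $t>0$: it equals $-\tau[\phi_1(x_+)p_1]+\tau[\phi_2(x_+)p_2]$ with $0\le\phi_1\le\epsilon_1$, $0\le\phi_2\le\epsilon_2$, and the choices of $\delta,\epsilon_i$ ensure $\phi_2(x_+)p_2=0$ unless $t<c_3+\epsilon_2$, a range on which $\phi_1(x_+)p_1=\epsilon_1 p_1$ already, so $g(t)\le-\epsilon_1\tau(p_1)+\epsilon_2\tau(p_2)=0$; hence $g\le0$ on $(0,\infty)$ and the $(\cdot-t)_+$ characterisation of submajorisation (as in the proof of Proposition \ref{treq}, cf. \cite[Proposition 2.3]{Hiai}) gives $\mu(x_++z)\prec\prec\mu(x_+)\prec\prec y_+$. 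For $x_+-z=x_++\epsilon_1p_1-\epsilon_2p_2$ — which raises the higher slice, the case where condition (ii) is needed — I would use $p_1\vee p_2\le E^{x_+}(c_4,c_1)$ together with \eqref{2.2}--\eqref{2.3} to obtain $E^{x_+-z}(c_1,\infty)=E^{x_+}(c_1,\infty)$ and $E^{x_+-z}(c_4,\infty)=E^{x_+}(c_4,\infty)$, so that $\mu(x_+-z)$ coincides with $\mu(x_+)$ on $[0,d_{x_+}(c_1))$ and on $[d_{x_+}(c_4),\infty)$, while $\mu(t;x_+-z)\le c_1$ for $t\ge d_{x_+}(c_1)$. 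Since $d_{x_+}(c_1)\le s_1$, $d_{x_+}(c_4)\le s_4$, and $\mu(x_+)\equiv c_1$ on $[d_{x_+}(c_1),s_1]$, this yields for every $s>0$: $\int_0^s\mu(t;x_+-z)\,dt\le\int_0^s\mu(t;x_+)\,dt\le\int_0^s\lambda(t;y_+)\,dt$ if $s\le s_1$; $\int_0^s\mu(t;x_+-z)\,dt\le\int_0^{s_1}\lambda(t;x_+)\,dt+\lambda(s_1;x_+)(s-s_1)\le\int_0^s\lambda(t;y_+)\,dt$ if $s_1\le s\le s_4$, by (ii); and $\int_0^s\mu(t;x_+-z)\,dt=\int_0^s\mu(t;x_+)\,dt\le\int_0^s\lambda(t;y_+)\,dt$ if $s\ge d_{x_+}(c_4)$, the zero-trace perturbation being already absorbed. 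Thus $(x-z)_+\prec\prec y_+$, so $x\pm z\in\Omega(y)$ and $x\notin\extr(\Omega(y))$.

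I expect the principal difficulty to be the last step: turning the \emph{local} slack provided by (ii) on the bounded window $[s_1,s_4]$ into a bound on $\int_0^s\mu(t;x_+-z)\,dt$ valid for \emph{all} $s\in(0,\infty)$, in an infinite — and possibly atomic — algebra, with the tails controlled solely by the zero-trace, eigenvalue-localised nature of $z$ (this is where \eqref{2.2}--\eqref{2.3} and the $\tau$-compactness of $x$ do the work). It is precisely this that makes the four-step hypothesis natural: the inner pair $c_2>c_3$ supplies two spectral slices separated by a gap, while the outer pair $c_1,c_4$ confines $p_1\vee p_2$ to positions inside $[s_1,s_4)$ and keeps the slices' spectra away from $0$, ensuring $x_+\pm z\ge0$. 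The analogous conclusion from the hypotheses on $x_-,y_-$ follows by the reduction in the first paragraph.
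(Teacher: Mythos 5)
Your proposal is correct and follows essentially the same route as the paper: both proofs perturb $x$ by a zero-trace, self-adjoint element of the form $\epsilon_1 p_1-\epsilon_2 p_2$ supported on two disjoint finite-trace spectral windows of $x_+$ lying strictly between the levels $\lambda(s_4;x_+)$ and $\lambda(s_1;x_+)$, note that the eigenvalue function is unchanged outside $[s_1,s_4]$, and use the chord bound of hypothesis (ii) to verify majorization inside. The only differences are cosmetic (the paper takes the windows $E^{x_+}[\tfrac{a_2+a_3}{2},\tfrac{a_1+a_2}{2})$ and $E^{x_+}[\tfrac{a_3+a_4}{2},\tfrac{a_2+a_3}{2})$ and bounds both perturbed elements by the same chord estimate, while you treat $x+z$ by the $(\cdot-t)_+$ criterion and only invoke (ii) for $x-z$), plus the implicit extra smallness condition $\epsilon_1<c_1-c_2$ needed for your claim that the spectrum above $c_1$ is untouched, which your ``$\epsilon_i$ small enough'' already covers.
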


\begin{proof}
For the sake of convenience, we denote $a_i =\lambda(s_i;x_+ )$, $i=1,2,3,4$.
Note that $a_1 >a_2 >a_3>a_4\ge 0$.

Let $p_1 := E^{x_+ } [\frac{a_2+a_3}{2}, \frac{a_1+a_2}{2})$ and $p_2:= E^{x _+ } [\frac{a_3+a_4}{2},\frac{a_2+a_3}{2})$.
We denote $T_1= \tau(p_1 )$ and $T_2 = \tau(p_2) $.
Observe that  $T_1,T_2<\infty $ and $p_1p_2 =0 $.
Set
$$ u : =  p_1 -  \frac{T_1}{T_2} p_2 . $$
It is clear that $\tau(u) =0$.
Assume that $\delta>0$ such that $$\delta < \min\{ \frac{a_1-a_2}{2}, \frac{(a_3-a_4)T_2 }{2 T_1} , \frac{ (2a_1-a_2-a_3)T_2 }{2T_1} , \frac{a_2+a_3-2a_4}{2}\} .$$
Let
$$x_1:=x + \delta u  \mbox{ and }x_2:=x- \delta u  .$$
By the spectral theorem, $\lambda(s;x_1)=\lambda(s;x_2)=\lambda (s;x )$ for
$s\not\in[s_1,s_4]$ and $\lambda(s;x_{i})\leqslant \lambda (s_1;x  )$ for
$s\in[s_1,s_4]$, $i=1,2$.
We assert that $x_{1},x_2\prec y  $.


Note that for $i=1,2$, we have
\begin{eqnarray*}
\int_0^{\infty}\lambda(t;(x_{i})_+)dt -
\int_0^\infty \lambda (t;(x_i)_-)dt =
 \tau (x_{i})    =\tau( x \pm \delta u)=\tau(x )=\tau(y ).
\end{eqnarray*}
Since $\lambda(s;x_{i})=\lambda (s;x)$ for
$s\not\in[s_1,s_4]$ and $i=1,2$,  it  follows that
$$\int_{s_1}^{s_4}\lambda(t;(x_i)_+ )dt-\int_{s_1}^{s_4}\lambda (t;x_+) dt=\int_0^{\infty }\lambda(t;(x_i)_+)dt-\int_0^{\infty }\lambda (t;x_+) dt=0,~i=1,2.$$
Hence, for $i=1,2$, we have
$$\int_0^s\lambda(t;(x_{i})_+)dt=\int_0^s \lambda (t;x_+)dt\leqslant\int_0^s \lambda (t;y_+  )dt,\quad s\not\in[s_1,s_4],$$
where the last inequality follows from the assumption that  $x\in\Omega(y).$
On the other hand, since $\lambda (x)$ is decreasing, it follows that
$$\int_0^{s}\lambda(t;(x _{i})_+)dt\leqslant\int_0^{s_1}\lambda (t;x )dt+ \lambda (s_1;x_+ )(s-s_1)\stackrel{(ii)}{\leqslant }\int_0^{s}\lambda (t; y _+ )dt,\quad s\in[s_1,s_4].$$
Hence, $x_1,x_2\in\Omega(y )$ and $x =\frac12(x_1+x _2).$
That is, $x \not\in\emph{extr}(\Omega(y))$.
\end{proof}

The following lemma extends \cite[Lemma 3.2]{DHS}.
\begin{lem}\label{counter example 2} Let $y\in L_1(\cM,\tau)_h.$
If $x  \in\emph{extr}(\Omega(y))$ and $\int_0^s \lambda (t;y_\pm )dt>\int_0^s\lambda (t;x_\pm)dt$ for all $s\in(t_1,t_2),$ then for any $s\in (t_1,t_2)$,
$\lambda (x_\pm )$ is a constant on $[s,s+\varepsilon)$ for some $\varepsilon>0$.
In particular, $\lambda (x_\pm) $ is a step function on $[t_1,t_2)$.
Here, $0\le t_1<t_2\le \infty $.
\end{lem}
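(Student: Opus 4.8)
The plan is to argue by contradiction using Lemma~\ref{counter example 1}: if $\lambda(x_+)$ failed to be constant on $[s,s+\varepsilon)$ for every $\varepsilon>0$ at some point $s\in(t_1,t_2)$, I would produce points $s_1<s_2<s_3<s_4$ witnessing hypotheses~(i)--(ii) of Lemma~\ref{counter example 1}, forcing $x\notin\mathrm{extr}(\Omega(y))$. It suffices to treat the ``$+$'' statement; the ``$-$'' statement follows by applying it to $-x\prec-y$, since $(-x)_+=x_-$, $(-y)_+=y_-$, and $z\mapsto-z$ is an affine bijection of $\Omega(y)$ onto $\Omega(-y)$ that preserves extreme points (here one uses that $z\prec y\Leftrightarrow -z\prec-y$ by Definition~\ref{definfi}). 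So fix $s\in(t_1,t_2)$ and suppose, towards a contradiction, that $\lambda(x_+)$ is not constant on $[s,s+\varepsilon)$ for any $\varepsilon>0$.

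\textbf{Two elementary consequences.} From this assumption, right-continuity and monotonicity of $\lambda(x_+)$, I would first record: \emph{(1)} $\lambda(u;x_+)<\lambda(s;x_+)$ for every $u>s$ — otherwise $\lambda(x_+)$ would be constant on a nondegenerate interval $[s,s_0]$; in particular $M:=\lambda(s;x_+)$ satisfies $0<M<\infty$ (finiteness because $x_+\in L_1(\cM,\tau)_+$, so $\lambda(x_+)=\mu(x_+)\in L_1(0,\infty)$ and $s>t_1\ge0$; positivity because $M=0$ would make $\lambda(x_+)\equiv0$ on $[s,\infty)$). \emph{(2)} $\lambda(x_+)$ assumes infinitely many values on every interval $(s,s+\varepsilon)$: if it assumed only finitely many on some $(s,s+\varepsilon_0)$ then, being nonincreasing with finite range, it would equal its maximum on $(s,s+\varepsilon_0)$ on a right-neighbourhood $(s,s+\delta)$, whence $\lambda(s+;x_+)$ equals that maximum and right-continuity forces $\lambda(x_+)$ constant on $[s,s+\delta)$ — a contradiction. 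In particular $\lambda(x_+)$ takes at least four distinct values on each $(s,s+\varepsilon)$.

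\textbf{The contradiction.} Since $\lambda(x_+),\lambda(y_+)\in L_1(0,\infty)$, the function $g(u):=\int_0^u\lambda(t;y_+)\,dt-\int_0^u\lambda(t;x_+)\,dt$ is continuous, and by hypothesis $g>0$ on $(t_1,t_2)$. Pick $\eta>0$ with $s+\eta<t_2$, set $c:=\min_{u\in[s,s+\eta]}g(u)>0$, and let $\eta':=\min\{\eta,\,c/M\}$. By (2) choose $s_1<s_2<s_3<s_4$ in $(s,s+\eta')$ with $\lambda(s_1;x_+)>\lambda(s_2;x_+)>\lambda(s_3;x_+)>\lambda(s_4;x_+)\ge0$; this is exactly condition~(i) of Lemma~\ref{counter example 1}. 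For condition~(ii), for $u\in[s_1,s_4]$ I would compute, using $\lambda(x_+)$ nonincreasing, $\lambda(s_1;x_+)\le M$ (as $s_1>s$), $u-s_1\le\eta'$ and $g(u)\ge c$:
\begin{align*}
\int_0^{s_1}\lambda(t;x_+)\,dt+\lambda(s_1;x_+)(u-s_1)-\int_0^{u}\lambda(t;y_+)\,dt
&=\int_{s_1}^{u}\big(\lambda(s_1;x_+)-\lambda(t;x_+)\big)\,dt-g(u)\\
&\le \lambda(s_1;x_+)(u-s_1)-c\le M\eta'-c\le 0.
\end{align*}
Hence both hypotheses of Lemma~\ref{counter example 1} hold, so $x\notin\mathrm{extr}(\Omega(y))$, contradicting the hypothesis. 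Therefore $\lambda(x_+)$ is constant on $[s,s+\varepsilon_s)$ for some $\varepsilon_s>0$, for every $s\in(t_1,t_2)$.

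\textbf{Step-function conclusion and main obstacle.} For the ``in particular'' I would deduce from right-local constancy together with monotonicity and right-continuity that $\lambda(x_+)$ is a step function on $[t_1,t_2)$: for each $s$, $c(s):=\sup\{c\le t_2:\lambda(x_+)\text{ constant on }[s,c)\}>s$, and $\lambda(x_+)$ equals $\lambda(s;x_+)$ on $[s,c(s))$; if $c(s)<t_2$, right-local constancy at $c(s)$ combined with right-continuity forces a strict downward jump at $c(s)$, so between consecutive downward jumps $\lambda(x_+)$ is constant; finally the jump points cannot accumulate from the right at any point of $(t_1,t_2)$, since such accumulation would make $\lambda(x_+)$ take infinitely many values on every right-neighbourhood there, against right-local constancy. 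The statement for $\lambda(x_-)$ follows by the reflection argument above. The main obstacle I anticipate is verifying condition~(ii) of Lemma~\ref{counter example 1}: ensuring the ``truncated'' profile $\int_0^{s_1}\lambda(x_+)\,dt+\lambda(s_1;x_+)(\cdot-s_1)$ stays below $\int_0^{\cdot}\lambda(y_+)\,dt$ over the \emph{whole} interval $[s_1,s_4]$ is what dictates the quantitative choice $\eta'\le c/M$ and forces one to localise near $s$ and exploit the uniform strict gap $c$.
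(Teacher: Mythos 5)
Your proof is correct and follows essentially the same route as the paper's: argue by contradiction, produce points near the bad location $s$ where $\lambda(x_+)$ strictly decreases, and use the strict positivity of $u\mapsto\int_0^{u}\bigl(\lambda(t;y_+)-\lambda(t;x_+)\bigr)dt$ on a compact right-neighbourhood of $s$ to verify hypothesis (ii) of Lemma~\ref{counter example 1}. Your write-up is in fact somewhat more careful than the paper's about extracting the four points $s_1<s_2<s_3<s_4$ with strictly decreasing values and about the quantitative choice $\eta'\le c/M$, and it also addresses the ``step function'' conclusion, which the paper leaves implicit.
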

\begin{proof}
We only consider the case for $x_+$ and $y_+$.
The case for $x_-$ and $y_-$ follows from the same argument.
For the sake of simplicity, we may assume that $x,y$ are positive.
Note that if $\lambda (t_1;x_+)=0$, then $\lambda(x_+)$ is a constant function on $(t_1,t_2)$.
Hence, we may assume that $\lambda (t_1; x_+)>0$.
If $\lambda (t_2;x_+)=0$, by the right-continuity, there exists a $t_2'\in (t_1,t_2]$ such that $\lambda (t_2'; x_+ ) =0 $ and $\lambda (t_2' -\varepsilon ; x_+)>0$ for any $\varepsilon>0$.
Hence,
without loss of generality,
we may assume that $\lambda (t_2-\varepsilon ;x_+)>0$  for any $\varepsilon>0$.

Assume by contradiction that there exists $s_1\in (t_1,t_2)$   such that  
 $(s_1,s_1+\varepsilon)$ is  not a  constancy interval  of $\lambda (x )$ for any  $\varepsilon>0.$
Without loss of generality, we assume that $\lambda(s_1+\varepsilon;x)>0$. Since $\lambda(x)$ is right-continuous, one can choose $s_2\in (s_1, s_1+\varepsilon)$ such that
$$\int_0^s \lambda (t; y) - \lambda (t;x)dt \ge 0,~s\in [s_1,s_2), $$
and
$$\frac{\int_0^{s_1}\lambda (t;y)-\lambda (t;x)dt}{\lambda (s_1; x)}\ge s_2-s_1.$$
Hence, for any $s\in [s_1,s_2]$, we have
$$ \int_0^{s} \lambda (t;y)dt \ge   \int_0^{s_1}\lambda (t; y)dt \ge   \int_0^{s_1} \lambda (t;x)dt +  \lambda (s_1;x) ( s_2-s_1) .  $$
Since $y$ and $x$ satisfy the assumptions in  Lemma~\ref{counter example 1}, it follows that $x\not\in\text{extr}(\Omega(Y)).$
\end{proof}

The following lemma covers cases 1 and 2 in the proof of  \cite[Theorem 1.1]{DHS}.
\begin{lem}\label{counter example 3} Let $y  \in L_1(\cM, \tau)_{h}.$ If $x \in\Omega(y)$ satisfies that
$$0\le \lambda (s_3;x_+)<\lambda (s_2;x_+)<\lambda (s_1;x_+)<\lambda (s_1-;x_+)$$
(or $0\le \lambda (s_3;x_-)<\lambda (s_2;x_-)<\lambda (s_1;x_-)<\lambda (s_1-;x_-)$) for some $0\le s_i<s_{i+1}<\infty,$ $i=1,2$ (for convenience, we define $\lambda (s_1-;x_+)=\infty $ when $s_1=0$), and $\int_0^s \lambda(x_+)<\int_0^s \lambda(y_+)$ (or $\int_0^s \lambda(x_-)<\int_0^s \lambda(y_-)$) on $(s_1,s_3)$,
then $x \not\in\emph{extr}(\Omega(y)).$
\end{lem}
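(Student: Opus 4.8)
The plan is to produce two distinct elements $x_{1},x_{2}\in\Omega(y)$ with $x=\tfrac12(x_{1}+x_{2})$, in the spirit of Lemma~\ref{counter example 1} and of cases~1--2 in the proof of \cite[Theorem~1.1]{DHS}, namely $x_{i}=x\pm\delta u$ for a small $\delta>0$ and a self-adjoint $u\in\cM$ with $\tau(u)=0$ built from spectral projections of $x_{+}$. First I would reduce the statement: replacing $(x,y)$ by $(-x,-y)$ leaves only the alternative concerning $x_{+},y_{+}$, and since a perturbation supported by $s(x_{+})$ does not change $x_{-}$, we may assume $x=x_{+}\ge 0$ and $y=y_{+}\ge 0$; then $\Omega(y)\subseteq L_{1}(\cM,\tau)_{+}$, $\lambda(x)=\mu(x)$, and $x\prec y$ means $x\prec\prec y$ together with $\tau(x)=\tau(y)$. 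Write $a_{i}=\lambda(s_{i};x)$, so $a_{1}>a_{2}>a_{3}\ge 0$, and $g(s)=\int_{0}^{s}\big(\lambda(t;y)-\lambda(t;x)\big)\,dt$, a continuous function that is nonnegative (since $x\prec y$) and strictly positive on $(s_{1},s_{3})$. The jump hypothesis enters through the spectral gap it produces: $\lambda(s_{1}-;x)>a_{1}$ forces $\tau(E^{x}(a_{1},\infty))=s_{1}$ and $E^{x}\big(a_{1},\lambda(s_{1}-;x)\big)=0$, so that an upward shift, by less than $\lambda(s_{1}-;x)-a_{1}$, of spectrum of $x$ sitting at or below $a_{1}$ does not disturb $\lambda(x)$ on $[0,s_{1})$; when $s_{1}=0$ the convention $\lambda(0-;x)=\infty$ makes this automatic, and the strict inequality $\int_{0}^{s}\lambda(x)<\int_{0}^{s}\lambda(y)$ on $(0,s_{3})$ plays the same role.

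For the perturbation I would argue by cases on the structure of the spectrum of $x$ in $[a_{3},a_{1}]$. In the generic case, when $x$ admits a non-trivial spectral band with spectrum in a compact subinterval $[\alpha,\beta]\subset(a_{3},a_{1})$ --- either a diffuse band or a spectral atom of $x$ that is not an atom of $\cM$ --- I split it into two disjoint sub-projections and let $u$ be their difference, normalized so that $\tau(u)=0$; for $\delta$ small the shifted parts swap within $(\alpha,\beta)$ without meeting the untouched spectrum, the affected times form a compact interval $[c,d]$ with $c>s_{1}$ (because $\beta<a_{1}$) and $d<s_{3}$ (because $\alpha>a_{3}$), $\lambda(x\pm\delta u)$ agrees with $\lambda(x)$ off $[c,d]$, and $\int_{c}^{d}\lambda(x\pm\delta u)=\int_{c}^{d}\lambda(x)$. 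In the remaining case, when the spectrum of $x$ meeting the interior of $(s_{1},s_{3})$ is carried by the unsplittable spectral atoms $E^{x}\{a_{1}\},E^{x}\{a_{2}\}$ and (if $a_{3}>0$) $E^{x}\{a_{3}\}$, I take $u=\tfrac{1}{\tau(E^{x}\{a_{2}\})}E^{x}\{a_{2}\}-\tfrac{1}{\tau(E^{x}\{a_{1}\})}E^{x}\{a_{1}\}$, so that $x+\delta u$ lowers $E^{x}\{a_{1}\}$ towards $a_{2}$ and raises $E^{x}\{a_{2}\}$ towards $a_{1}$, while $x-\delta u$ raises $E^{x}\{a_{1}\}$ into the gap $(a_{1},\lambda(s_{1}-;x))$ and lowers $E^{x}\{a_{2}\}$ towards $a_{3}$ --- for $\delta$ less than a fixed multiple of $\min\{a_{1}-a_{2},\ \lambda(s_{1}-;x)-a_{1},\ a_{2}-a_{3}\}$ the spectral order and positivity are preserved, and no atom is subdivided. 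In either case $\tau(x\pm\delta u)=\tau(x)=\tau(y)$, $x=\tfrac12((x+\delta u)+(x-\delta u))$, and $x+\delta u\ne x-\delta u$.

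It remains to verify $x\pm\delta u\prec y$, i.e.\ $\int_{0}^{s}\lambda(t;x\pm\delta u)\,dt\le\int_{0}^{s}\lambda(t;y)\,dt$ for every $s$ (equality of traces then yields $\prec$). Putting $h(s)=\int_{0}^{s}\big(\lambda(t;x+\delta u)-\lambda(t;x)\big)\,dt$, one has $h=0$ off the interval of affected times and $|h|\le\delta C$ for some constant $C$, so the claim becomes $|h(s)|\le g(s)$. In the generic case $h=0$ off a compact $[c,d]\subset(s_{1},s_{3})$ on which $g$ is bounded below by some $m>0$, and any $\delta<m/C$ works. In the atomic case the affected times form an interval $[s_{1},v_{1})$ with $v_{1}\le s_{3}$; a direct computation shows $h\le 0$ throughout for $x+\delta u$ (so that inequality is immediate), while for $x-\delta u$ the function $h$ is nonnegative, bounded by $\delta C$, and satisfies $h(s)\le\delta\gamma_{1}(s-s_{1})$ near $s_{1}$ and $h(s)\le\delta\gamma_{2}(s_{3}-s)$ near $s_{3}$. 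These are absorbed by $g$: where $g$ does not vanish at an endpoint it is bounded below on the relevant compact set by continuity; where $g$ does vanish --- say $g(s_{1})=0$ --- the hypotheses $\lambda(x)\equiv a_{1}$ on $[s_{1},s_{1}+\tau(E^{x}\{a_{1}\}))$ and $g>0$ on $(s_{1},s_{3})$ force $\lambda(s_{1};y)>a_{1}$, so $g$ grows at least linearly in $s-s_{1}$ near $s_{1}$, matching the linear bound on $h$; and if $v_{1}=s_{3}$, then $\lambda(x)\equiv a_{2}$ near $s_{3}^{-}$ and $g>0$ force $\lambda(s_{3}-;y)<a_{2}$, so $g$ grows at least linearly in $s_{3}-s$ near $s_{3}$. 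Hence $x\pm\delta u\in\Omega(y)$ and $x\notin\mathrm{extr}(\Omega(y))$; applying this to $(-x,-y)$ disposes of the second alternative.

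The main obstacle is precisely the atomic case: one must keep the cumulative perturbation $h$ nonpositive, or only mildly positive, exactly where $g$ is smallest --- near $s_{1}$ and $s_{3}$, where $g$ may vanish --- and the only leverage is the jump $\lambda(s_{1}-;x)>\lambda(s_{1};x)$, which simultaneously supplies the spectral gap for the upward shift and, via $g(s_{1})=0$, the strict inequality $\lambda(s_{1};y)>a_{1}$, together with the strictness of $a_{1}>a_{2}>a_{3}$ and of $\int_{0}^{s}\lambda(x)<\int_{0}^{s}\lambda(y)$ on the \emph{open} interval $(s_{1},s_{3})$. The degeneracies $a_{3}=0$ and $x\notin\cM$ (so that $\lambda(0-;x)=\infty$) are accommodated by perturbing only at the levels $a_{1},a_{2}$ and by the $s_{1}=0$ convention; this case analysis, absent from the finite-dimensional setting of \cite{DHS}, is the ``additional care'' the infinite case requires.
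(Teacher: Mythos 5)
Your overall strategy is legitimate but genuinely different from the paper's. The paper argues by contradiction: assuming $x\in\mathrm{extr}(\Omega(y))$, it invokes Lemma~\ref{counter example 2} to force $\lambda(x_+)$ to be a step function on $(s_1,s_3)$ with constancy intervals exactly $[s_1,s_2)$ and $[s_2,s_3)$ (after relabelling the $s_i$), then perturbs the two \emph{full} level sets $E^{x_+}\{a_1\},E^{x_+}\{a_2\}$ by $\pm\delta u$ and closes the majorization check purely by concavity of $s\mapsto\int_0^s\lambda(t;y)dt$: the cumulative integral of the perturbed operator is linear on each constancy interval and dominated at the breakpoints $s_1,s_2,s_3$ (at $s_2$ via the strict inequality absorbed into $\delta<C-a_1$, at $s_1$ and $s_3$ because the cumulative integrals of $x$ and $x\pm\delta u$ actually coincide there). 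This sidesteps entirely your case analysis on whether spectral projections are atoms of $\cM$ and your derivative estimates on $g$ at the endpoints. You instead construct the perturbation directly from the actual spectral structure of $x$, which is a viable route, and your key endpoint observation (that $g(s_1)=0$ together with $g>0$ on $(s_1,s_3)$ forces $\lambda(s_1;y)>a_1$, hence a linear lower bound on $g$ near $s_1$, and symmetrically at $s_3$) is correct and is exactly the leverage needed.

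However, as written your proof has two concrete holes. First, the generic/atomic dichotomy is not exhaustive: take $x=a_1\chi_{(0,1)}+a_2 e+a_3\chi_{(2,3)}$ with $e$ an atom of $\cM$ and the other two bands diffuse. The only spectrum strictly inside $(a_3,a_1)$ is the unsplittable atom at $a_2$, so your generic case does not apply, while your atomic case requires $E^x\{a_1\}$ to be an atom, which it is not; the configuration falls through. Second, in the generic case the claim that the affected times form a compact $[c,d]$ with $d<s_3$ ``because $\alpha>a_3$'' is false in general: one only gets $d\le s_3$, and equality occurs whenever there is no spectrum of $x$ in $(a_3,\alpha)$ and $\tau(E^x([\alpha,\infty)))=s_3$ (e.g.\ $[\alpha,\beta]=\{a_2\}$ with the $a_2$-band's time support ending at $s_3$). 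Since the hypothesis only gives strict inequality on the \emph{open} interval, $g(s_3)$ may vanish, the uniform lower bound $m=\inf_{[c,d]}g$ is then $0$, and ``any $\delta<m/C$'' selects nothing. Both defects are repairable with tools you already deploy in the atomic case --- perturb whichever level set is splittable, including those at the endpoint values $a_1$ or $a_3$, and run the linear-growth estimate on $g$ at whichever of $s_1,s_3$ the affected interval reaches --- but the proof needs to be reorganized so that every spectral configuration is actually covered; alternatively, adopting the paper's reduction via Lemma~\ref{counter example 2} eliminates the case analysis altogether.
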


\begin{proof}
Assume by contradiction that $x\in \emph{extr}(\Omega (y))$.
For the sake of convenience, we denote $a_i =\lambda(s_i;x_+ )$, $i=1,2$.
By Lemma \ref{counter example 2}, for every $t\in (s_1,s_4)$, $\lambda(x_+)$ is a constant on $[t,t+\varepsilon)$ for some sufficiently small $\varepsilon>0$.
Now, let's consider $s_1$.
Assume that $\lambda (x_+)$ is not constancy on $[s_1,s_1+\varepsilon)$ for any $\varepsilon>0$.
Then, by right-continuity, there
exists a sequence of positive numbers $\varepsilon_n$
decreasing to $0$ such that
$\{\lambda(s_1+\varepsilon_n;x_+)\} $
strictly increases to $\lambda (s_1;x_+)$.
Then, we may replace
$s_1$, $s_2$ and $s_3$ with $s_1+\varepsilon_n$,
$s_1+\varepsilon_{n-1}$ and $s_1+\varepsilon_{n-2}$, respectively.
By Lemma \ref{counter example 2}, we obtain that
 $\lambda (x_+)$ are constancy on
 $[s_1 , s_1  +\delta_1 )$,
$[s_1+\delta_1 , s_1 +\delta_2 )$ and $[s_1+\delta_2 , s_2 + \delta_3 )$
for some $\delta_1,\delta_2,\delta_3 >0$.
Hence, without loss of generality,   we may assume that $\lambda (s;x_+)=a_i$ on $[s_i,s_{i+1})$, $i=1,2$.

Let $p_1 := E^{x_+ } \{a_1\}$ and $p_2:= E^{x _+ } \{a_2\}$.
We denote $T_1= \tau(p_1 )$ and $T_2 = \tau(p_2) $.
Observe that  $T_1,T_2<\infty $ and $p_1p_2 =0 $.
Set
$$ u : =  p_1 -  \frac{T_1}{T_2} p_2 . $$
It is clear that $\tau(u) =0$.
Let $C:= \frac{1}{s_2-s_1}\left(\int_{0}^{s_2} \lambda (y) -\int_{0}^{s_1} \lambda (x) \right)$.
Clearly, $C>\frac{1}{s_2-s_1}\int_{s_1}^{s_2} \lambda (x)=a_1$.

Assume that $\delta>0$ such that $$\delta < \min\left\{C-a_1, (a_-)-a_1, \frac{a_1-a_2}{1+\frac{T_1}{T_2}}, \frac{ (a_2-a_3)T_2 }{T_1} \right\} .$$
Let
$$x_1:=x + \delta u  \mbox{ and }x_2:=x- \delta u  .$$
By the spectral theorem, $\lambda(s;x_1)=\lambda(s;x_2)=\lambda (s;x )$ for
$s\not\in[s_1,s_3]$ and $\lambda(s;x_{i})\leqslant \lambda (s_1;x  )$ for
$s\in[s_1,s_3]$, $i=1,2$.
We assert that $x_{1},x_2\prec y_+  $.


Note that
\begin{eqnarray*}
\int_0^{\infty}\lambda(t;(x_{1})_+)dt -
\int_0^\infty \lambda (t;(x_1)_-)dt =
 \tau (x_{1})    =\tau( x+ \delta u)=\tau(x )=\tau(y ).
\end{eqnarray*}
Since $\lambda(s;x_{i})=\lambda (s;x)$ for
$s\not\in[s_1,s_3]$ and $i=1,2$,  it  follows that
$$\int_{s_1}^{s_3}\lambda(t;(x_i)_+ )dt-\int_{s_1}^{s_3}\lambda (t;x_+) dt=\tau(x_i)-\tau(x)=0,~i=1,2.$$
Hence, for $i=1,2$, we have
$$\int_0^s\lambda(t;(x_{i})_+)dt=\int_0^s \lambda (t;x_+)dt\leqslant\int_0^s \lambda (t;y_+  )dt,\quad s\not\in[s_1,s_3],$$
where the last inequality follows from the assumption that  $x\in\Omega(y).$
Since $s\mapsto \int_0^s \lambda (t;y)dt$ is a concave function,
$$\int_0^{s_1} \lambda (t;y_+)dt\ge \int_0^{s_1} \lambda (t;(x_1)_+)dt,$$
$$\int_0^{s_2} \lambda (t;y_+)dt\ge \int_0^{s_2} \lambda (t;(x_1)_+)dt $$
and $\lambda ((x_1)_+)$ is constancy on $[s_1,s_2)$,
it follows that $\int_0^s\lambda (t;x_1)dt\le \int_0^s \lambda (t;y)dt$ for any  $s\in [s_1,s_2)$.
The same argument show that
$\int_0^s\lambda (t;x_1)dt\le \int_0^s \lambda (t;y)dt$ on for any $s\in[s_2,s_3)$.
Hence, $x_1\in\Omega(y )$ (similarly, $x_2\in \Omega (y)$) and $x =\frac12(x_1+x _2).$
That is, $x \not\in\emph{extr}(\Omega(y))$.
\end{proof}

Let $x \in L_1(\cM,\tau)_+$.
Denote by $\cN_x$ the abelian von Neumann (reduced) algebra generated by all spectral projections $E^{x}(t,\infty)$ of $x $, $t>0$.
We define $\cN:= \cN_x \oplus \cM_{{\bf 1} -s(x)}$.
In particular, $x\in L_1(\cN,\tau)_h$.
 Since $x$ is $\tau$-compact,
 it follows that $\tau$ is again semifinite on $\cN$.
Recall that (see \cite[Proposition 1.1]{HN1})
\begin{align}\label{Hineq}\int_0^s   \lambda (t;x)dt =\sup \{\tau(x a);a\in \cM, 0\le a\le 1, \tau(a)=s\}.
\end{align}
Assume that $\tau(xe) = \int_0^s  \lambda (t;x)dt$ for a projection $e\in \cM$ with $\tau(e)=  s $.
Let $E_\cN$ be the conditional expectation from $L_1(\cM,\tau)$ onto $ L_1(\cN,\tau)$ \cite{U} (see also \cite[Proposition 2.1]{DDPS}).
In particular,  $E _\cN(e)\le {\bf 1} $ and $\tau(E_\cN(e))=\tau(e)=s$.
Moreover, \begin{align}\label{ecnex}
\tau(E_\cN(e)x) \stackrel{\mbox{\cite[Prop. 2.1]{DDPS}}}{=}\tau(ex) =\int_0^s \lambda (t;x)dt.
\end{align}
We note that $E_\cN(f)$, $f\in \cP(
\cM)$,  is not necessarily a projection \cite{Sukochev}.
The proof of the following proposition is similar with \cite[Proposition 3.3]{DHS}.
We provide a proof for completeness.
\begin{prop}Under the above assumptions on $e$, we have
\begin{align}\label{te}
E^{x}(\lambda (s;x),\infty) \le E_\cN(e) \le E^{x}[\lambda (s;x),\infty)
.\end{align}
\end{prop}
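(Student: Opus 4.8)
The plan is to reduce the inclusion \eqref{te} to two operator identities and to prove each by a ``mass transfer'' argument against the variational description in Proposition~\ref{HN1}. Write $a:=E_{\cN}(e)$; by the remarks preceding the statement, $a\in\cN$, $0\le a\le\mathbf 1$, $\tau(a)=s$ and $\tau(ax)=\int_0^s\lambda(t;x)\,dt$, and $a$ commutes with $x$ since every element of $\cN$ does. Put $p:=E^{x}(\lambda(s;x),\infty)$ and $r:=E^{x}[\lambda(s;x),\infty)$, so $p\le r$. Because $x$ is $\tau$-compact and $\lambda(s;x)=\mu(s;x)$, the definition of $\mu(s;x)$ together with right-continuity of the distribution function $d_x$ gives $\tau(p)=d_x(\lambda(s;x))\le s$, so that in particular $p$ is $\tau$-finite; taking left limits of $d_x$ just below $\lambda(s;x)$ gives $\tau(r)\ge s$ (with $r=\mathbf 1$, hence $\tau(r)=\infty$, when $\lambda(s;x)=0$). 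The asserted inclusion \eqref{te} is exactly
\[ ap=p \qquad\text{and}\qquad a(\mathbf 1-r)=0 , \]
since, $a$ commuting with the projections $p$ and $r$, these yield $a=p+(\mathbf 1-p)a(\mathbf 1-p)\ge p$ and $r-a=r(\mathbf 1-a)r\ge0$, i.e. $a\le r$.

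The engine is the following observation. Suppose there are mutually orthogonal nonzero $\tau$-finite projections $g,h\in\cN$ with $\tau(g)=\tau(h)$, reals $\alpha>\beta\ge0$ and $\varepsilon>0$ with $g\le E^{x}[\alpha,\infty)$, $h\le E^{x}[0,\beta]$, $ag\le(1-\varepsilon)g$ and $ah\ge\varepsilon h$. Then for $0<\delta\le\varepsilon$ the self-adjoint element $\tilde a:=a+\delta(g-h)\in\cM$ satisfies $0\le\tilde a\le\mathbf 1$ and $\tau(\tilde a)=s$, while, using that $g$ and $h$ commute with $x$,
\[ \tau(x\tilde a)-\tau(xa)=\delta\bigl(\tau(xg)-\tau(xh)\bigr)=\delta\bigl(\tau(gxg)-\tau(hxh)\bigr)\ge\delta(\alpha-\beta)\tau(g)>0 . \]
This contradicts Proposition~\ref{HN1}, which forces $\tau(x\tilde a)\le\int_0^s\lambda(t;x)\,dt=\tau(xa)$. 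Hence no such pair $(g,h)$ exists.

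To obtain $ap=p$ I would argue by contradiction: if $(\mathbf 1-a)p\ne0$ then $\tau(ap)<\tau(p)\le s=\tau(a)$, so $\tau\bigl(a(\mathbf 1-p)\bigr)>0$ and $a(\mathbf 1-p)\ne0$. Using semifiniteness of $\tau$ on $\cN$ and that $E^{x}(\lambda(s;x)+\eta,\infty)\uparrow p$ as $\eta\downarrow0$, pick $\varepsilon,\eta>0$ small so that $g:=p\wedge E^{a}[0,1-\varepsilon]\wedge E^{x}(\lambda(s;x)+\eta,\infty)$ and some nonzero $\tau$-finite subprojection $h$ of $(\mathbf 1-p)\wedge E^{a}[\varepsilon,\infty)$ are nonzero ($g$ is automatically $\tau$-finite as $g\le p$); shrinking whichever has larger trace gives $\tau(g)=\tau(h)$. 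Since $h\le\mathbf 1-p=E^{x}[0,\lambda(s;x)]$, the observation applies with $\alpha=\lambda(s;x)+\eta>\beta=\lambda(s;x)$, a contradiction. The proof of $a(\mathbf 1-r)=0$ is symmetric: if $a(\mathbf 1-r)\ne0$ then $\tau(ar)=s-\tau\bigl(a(\mathbf 1-r)\bigr)<s\le\tau(r)$, so $(\mathbf 1-a)r\ne0$; now take a nonzero $\tau$-finite $g\le r\wedge E^{a}[0,1-\varepsilon]$ (so $g\le E^{x}[\lambda(s;x),\infty)$) and, since $E^{x}[0,\lambda(s;x)-\eta]\uparrow\mathbf 1-r$, a nonzero $\tau$-finite $h\le(\mathbf 1-r)\wedge E^{a}[\varepsilon,\infty)\wedge E^{x}[0,\lambda(s;x)-\eta]$, matched in trace, and apply the observation with $\alpha=\lambda(s;x)$, $\beta=\lambda(s;x)-\eta$.

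Compared with \cite[Proposition~3.3]{DHS}, the one genuinely new point is the bookkeeping forced by an infinite trace: $\mathbf 1-p$, $r$ and $\mathbf 1-r$ may have infinite trace, so the auxiliary $g,h$ must be carved out with \emph{finite} trace using semifiniteness of $\tau$ on $\cN$, whereas $p$ is automatically $\tau$-finite because $x\in L_1(\cM,\tau)$ is $\tau$-compact. The degenerate cases $s=0$ (then $e=0$ and the statement is vacuous) and $\lambda(s;x)=0$ (then $r=\mathbf 1$, so the second identity is trivial) are immediate. I expect the main care to lie in verifying that the chosen $g,h$ are nonzero — i.e. in correctly invoking normality/continuity of the spectral measures of $x$ and of $a$ — and in the routine but slightly fussy check that $\tilde a$ stays in $[0,\mathbf 1]$ and has trace $s$.
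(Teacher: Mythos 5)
Your route is genuinely different from the paper's: the paper fixes the candidate $E_\cN(e)$ and derives a contradiction from a direct computation, splitting $\int_0^s\lambda(t;x)\,dt$ along $\lambda\bigl(t;E_\cN(e)E^x(\lambda(s;x),\infty)\bigr)$ and using the rearrangement inequality $\tau(uv)\le\int\mu(u)\mu(v)$ together with the strict inequality $\lambda(t;x)>\lambda(s;x)$ for $t<\tau\bigl(E^x(\lambda(s;x),\infty)\bigr)$; you instead perturb the maximizer and test it against the variational formula of Proposition~\ref{HN1}. Your strategy is workable (and matches the perturbation technique the paper itself uses in Lemmas~\ref{counter example 1} and~\ref{counter example 3}), but as written there are two genuine gaps in the execution.

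First, the ``engine'' needs $g$ and $h$ to \emph{commute} with $a$, not merely to satisfy $gag\le(1-\varepsilon)g$ and $hah\ge\varepsilon h$. Without commutation, $hah\ge\varepsilon h$ does not imply $a\ge\delta h$: already for $2\times2$ matrices, $a=\mathrm{diag}(1,0)$ and $h$ a rank-one projection not aligned with the axes give $hah=ch$ with $c>0$ while $a-\delta h$ has negative determinant for every $\delta>0$. So the step $0\le\tilde a$ fails for a badly chosen $h$. Your $g$ in the first application is safe (it is a product of mutually commuting spectral projections of $a$ and $x$, hence commutes with $a$), but ``some nonzero $\tau$-finite subprojection $h$ of $(\mathbf 1-p)\wedge E^{a}[\varepsilon,\infty)$'' is not: an arbitrary subprojection destroys commutation with $a$. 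The repair is to take $h$ equal to the full meet $(\mathbf 1-p)E^{a}[\varepsilon,\infty)$ (and likewise in the second application): this commutes with $a$ and $x$, and it is \emph{automatically} $\tau$-finite because $\varepsilon\,\tau(h)\le\tau(hah)\le\tau(a)=s<\infty$ — so no carving out of finite-trace pieces is needed there at all. (For the second application, $g\le r$ is automatically $\tau$-finite since $\tau(r)=d_x(\lambda(s;x)^-)\le d_x(\lambda(s;x)/2)<\infty$ when $\lambda(s;x)>0$, by $\tau$-compactness of $x$.)

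Second, ``shrinking whichever has larger trace gives $\tau(g)=\tau(h)$'' is not available in general: exact trace matching by passing to subprojections fails in atomic algebras, and even when possible it again threatens commutation with $a$. This is easily avoided — and is exactly what the paper does in its own perturbation arguments — by keeping $g,h$ as they are and using asymmetric coefficients, $\tilde a:=a+\delta g-\delta\frac{\tau(g)}{\tau(h)}h$ with $\delta$ small enough that both coefficients are $\le\varepsilon$; then $\tau(\tilde a)=s$, $0\le\tilde a\le\mathbf 1$, and $\tau(x\tilde a)-\tau(xa)\ge\delta\tau(g)(\alpha-\beta)>0$ still contradicts Proposition~\ref{HN1}. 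With these two repairs your argument goes through and gives a legitimate alternative to the paper's proof.
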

\begin{proof}
We present the proof for  the first inequality and a similar argument yields that $E_\cN(e) \le E^x [\lambda (s;x),\infty)$.

Without loss of generality, we may assume that $E^{x}(\lambda (s;x),\infty)\ne 0$.
Since $\cN_x$ is a commutative algebra, $\cM_{{\bf 1}-s(x)}\perp \cN_x$ and $0\le E_\cN(e)\le {\bf 1} $, it follows that
$$E_\cN(e) E^x (\lambda (s;x), \infty)= E^x (\lambda (s;x), \infty)^{1/2} E_\cN(e)  E^x (\lambda (s;x), \infty) ^{1/2} \le  E^x (\lambda (s;x), \infty).  $$
 If $E_\cN(e)E^x(\lambda(s;x),\infty) = E^x(\lambda(s;x),\infty)$, then \begin{align}\label{ENE>LAMBDA}
 E_\cN(e) \ge E_\cN(e)^{1/2} E^x(\lambda(s;x),\infty)E_\cN(e)^{1/2} &= E_\cN(e) E^x(\lambda(s;x),\infty) \nonumber\\
 &= E^x(\lambda(s;x),\infty),
 \end{align}
 which proves the first inequality of \eqref{te}.

 Now, we  assume that  $$E_\cN(e) E^x(\lambda (s;x),\infty) < E^x(\lambda (s;x),\infty) .$$
 This implies that
 \begin{align}\label{EcNe1}
  \tau( E_\cN(e) E^x(\lambda (s;x),\infty))< \tau(E^x(\lambda (s;x),\infty) ).
  \end{align}
  Since $\tau(E^x(\lambda (s;x), \infty ))\le s$,
  it follows that $\lambda (t;E_\cN(e) E^x(\lambda (s;x),\infty)  ) =0$ when $t>s$.
  Hence,
   \begin{align*}
   \int_0^s  \lambda (t;E_\cN(e) E^x(\lambda (s;x),\infty) ) dt &=\tau( E_\cN(e) E^x(\lambda (s;x),\infty))\\
   &\stackrel{\eqref{EcNe1}}{<} \tau(E^x(\lambda (s;x),\infty) )=  \int_0^s  \lambda (t;  E^x(\lambda (s;x),\infty) )dt . \end{align*}
   In particular,
    \begin{align}\label{1-}
    \int_0^s 1- \lambda (t;E_\cN(e) E^x(\lambda (s;x),\infty) ) dt >0   .
    \end{align}
Assume that $\tau(E_\cN(e) E^x(\lambda (s;x),\infty))=a_1 \ge  0$ and $\tau(E_\cN(e) E^x(-\infty, \lambda (s;x)] )=a_2  \ge  0 $.
Observe that $a_1+a_2 =s=\tau(E_\cN(e))$.
Moreover,  by \eqref{disx}, for every $0 <t< \tau(E^x( \lambda (s;x) ,\infty ))$, we have  $\lambda (t;x) > \lambda (s;x)$. Hence,
\begin{align*}
  \int_0^s  \lambda (t;x)dt &=  \int_0^s  \lambda (t;x)  \lambda (t;E_\cN(e) E^x(\lambda (s;x),\infty)  )dt \\
&\qquad +  \int_0^s  \lambda (t;x)  (1- \lambda (t; E_\cN(e) E^x(\lambda (s;x),\infty)  )dt \\
 & \stackrel{\eqref{1-}}{>}   \int_0^s  \lambda (t;x)  \lambda (t;E_\cN(e) E^x(\lambda (s;x),\infty)  )dt\\
 &\qquad +   \lambda (s;x)  \int_0^s  (1- \lambda (t; E_\cN(e) E^x(\lambda (s;x),\infty)  )dt.
 \end{align*}
Recall that  $\int_0^s  \lambda (t;E_\cN(e) E^x(\lambda (s;x),\infty) ) dt =\tau( E_\cN(e) E^x(\lambda (s;x),\infty))$. The above inequality implies that
\begin{align*} &\quad  \int_0^s  \lambda (t;x)dt \\
& >   \int_0^\infty   \lambda (t;x)  \lambda (t;E_\cN(e) E^x(\lambda (s;x),\infty)  )dt +     \lambda (s;x) \Big(s - \tau( E_\cN(e) E^x(\lambda (s;x),\infty)  ) \Big) \\
    & =  \int_0^\infty   \lambda \Big(t;x  \Big)  \lambda \Big(t;E_\cN(e) E^x(\lambda (s;x),\infty)  \Big)dt+     \lambda (s;x)   a_2 .
    \end{align*}
Since $ \lambda (s;x) a_2 = \tau( \lambda (s;x)E_\cN(e) E^x(-\infty, \lambda (s;x)] ) \ge  \tau( E_\cN(e) E^x(-\infty, \lambda (s;x)] x)$, it follows that
\begin{align*}  \int_0^s  \lambda (t;x)dt  &\stackrel{\eqref{2.4}}{> }    \tau \Big( xE^x(\lambda (s;x),\infty)  E_\cN(e)   \Big)   +         \tau\Big(  x E^{x}(-\infty,\lambda (s;x)]E_\cN(e)     \Big)\\
&=  \tau(xE_\cN(e)   ),
 \end{align*}
which is a contradiction with \eqref{ecnex}.
Hence, the equality $E_\cN(e)E^x (\lambda (s;x),\infty)= E^x (\lambda (s;x),\infty)$ holds, and therefore, by \eqref{ENE>LAMBDA}, we have $E_\cN(e) \ge E^x (\lambda (s;x),\infty)$.
\end{proof}

\begin{lem}\label{reduction to commutative} Let $x\in L_1(\mathcal{M},\tau)_+$. Let $0<s< \tau(\mathbf 1)=\infty $ and let $a $ be in the unit ball of $ \mathcal{M}_+ $  such that $\tau(a)=s$  and $\tau(xa)=\int_0^s\lambda(t;x)dt.$
If $\lambda(x)$ is not a constant in any left neighborhood of $s,$ then $a=E^ x(\lambda(s;x),\infty).$
\end{lem}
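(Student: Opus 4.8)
The plan is to argue directly, in the spirit of the preceding proposition and \cite[Proposition 3.3]{DHS}, by comparing $a$ with the spectral projection $p:=E^x(\lambda(s;x),\infty)$ through the ``diagonal block'' decomposition of $a$ along $p$; the hypothesis on $\lambda(x)$ will enter only to fix the value $\tau(p)=s$.

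First I would translate the hypothesis. With $d_x(r)=\tau(E^x(r,\infty))$, right-continuity of $d_x$ always gives $d_x(\lambda(s;x))\le s$; conversely, if $d_x(\lambda(s;x))=s'<s$ then $d_x(\lambda(s;x))\le t$ for every $t\in(s',s)$, forcing $\lambda(t;x)=\lambda(s;x)$ for all $t\in(s',s)$, so $\lambda(x)$ would be constant on the left neighborhood $(s',s)$ of $s$ -- contrary to assumption. Hence $\tau(p)=d_x(\lambda(s;x))=s$ (this also handles the degenerate case $\lambda(s;x)=0$ flagged in the remark before the lemma). Combining $\tau(p)=s$ with $\eqref{2.2}$ applied to $e=p$ gives $\mu(xp)=\mu(x)\chi_{[0,s)}$, whence $\tau(xp)=\int_0^s\lambda(t;x)\,dt=\tau(xa)$, i.e. $\tau\big(x(p-a)\big)=0$.

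Next I would set $b:=pap$ and $c:=p^{\perp}ap^{\perp}$, so $0\le b\le p$ and $0\le c\le p^{\perp}$. Because $x$ commutes with $p$, the off-diagonal terms drop out under the trace, giving $\tau(a)=\tau(b)+\tau(c)$ and $\tau(xa)=\tau(xb)+\tau(xc)$; the spectral theorem gives $xp\ge\lambda(s;x)p$ and $xp^{\perp}\le\lambda(s;x)p^{\perp}$, hence $\tau\big(x(p-b)\big)\ge\lambda(s;x)\tau(p-b)$ and $\tau(xc)\le\lambda(s;x)\tau(c)$. Feeding these into $0=\tau\big(x(p-a)\big)=\tau\big(x(p-b)\big)-\tau(xc)$ together with the identity $\tau(p-b)-\tau(c)=\tau(p)-\tau(a)=0$ (this is where the first step is used) forces all the inequalities to be equalities; in particular $\tau\big((xp-\lambda(s;x)p)(p-b)\big)=0$. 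Since $xp-\lambda(s;x)p\ge0$ and $p-b=p(\mathbf 1-a)p\ge0$, faithfulness of $\tau$ yields $(xp-\lambda(s;x)p)^{1/2}(p-b)^{1/2}=0$; as the range of $(p-b)^{1/2}$ sits inside the range of $p$, on which $xp-\lambda(s;x)p$ is injective, this gives $p=b=pap$. From $p(\mathbf 1-a)p=0$ I then get $(\mathbf 1-a)p=0$, i.e. $ap=pa=p$, so $a=p+c$ with $c\ge0$; taking traces and using $\tau(p)=s=\tau(a)$ forces $\tau(c)=0$, hence $c=0$ and $a=p=E^x(\lambda(s;x),\infty)$.

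I expect the genuinely load-bearing point to be the equivalence in the first step between ``$\lambda(x)$ is not constant in any left neighborhood of $s$'' and ``$\tau(E^x(\lambda(s;x),\infty))=s$''; the rest is bookkeeping, the only care being the manipulations with possibly unbounded $\tau$-measurable operators (cyclicity of $\tau$ when pairing $x\in L_1(\cM,\tau)$ against elements of $\cM$, the inequalities $xp\ge\lambda(s;x)p$ and $xp^{\perp}\le\lambda(s;x)p^{\perp}$ read in $S(\cM,\tau)$, and passing from a vanishing trace to a vanishing operator). One could instead apply the preceding proposition to $E_{\cN}(a)$ to obtain $E_{\cN}(a)=p$ and then upgrade to $a=p$, but the direct route above bypasses the conditional expectation.
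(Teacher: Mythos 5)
Your proof is correct, but it follows a genuinely different route from the paper's. The paper reduces to the commutative algebra $\cN=\cN_x\oplus\cM_{\mathbf 1-s(x)}$ via the trace-preserving conditional expectation $E_\cN$, applies the submajorization inequality \eqref{2.4} to $y=(x-\lambda(s;x))_+$ paired against $E_\cN(a)$ to force $\lambda(E_\cN(a))=\chi_{(0,s)}$ (so that $E_\cN(a)$ is a projection of $\cN$), then upgrades this to $a=E_\cN(a)$ by a faithfulness argument and finally identifies the Borel set as $(\lambda(s;x),\infty)$. You instead work directly in $\cM$: you first observe that the non-constancy hypothesis is precisely what yields $\tau(p)=s$ for $p=E^x(\lambda(s;x),\infty)$, hence via \eqref{2.2} that $\tau(xp)=\int_0^s\lambda(t;x)\,dt=\tau(xa)$; you then split $a$ into the diagonal blocks $pap$ and $p^{\perp}ap^{\perp}$ (the cross terms vanish under the trace because $p$ commutes with $x$), use the operator inequalities $xp\ge\lambda(s;x)p$ and $xp^{\perp}\le\lambda(s;x)p^{\perp}$ together with $\tau(p)=\tau(a)$ to squeeze $\tau\bigl((xp-\lambda(s;x)p)\,p(\mathbf 1-a)p\bigr)=0$, and conclude from faithfulness and the injectivity of $x-\lambda(s;x)$ on $p\cH$ that $pap=p$, whence $ap=pa=p$ and the trace condition annihilates the remaining block. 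All steps check out, including the degenerate case $\lambda(s;x)=0$, where $xp^{\perp}=0$ makes the relevant inequalities equalities automatically. Your route buys economy and transparency: it bypasses the conditional expectation, the submajorization inequality and the reduction to $\cN$, and it isolates the exact role of the non-constancy hypothesis as the single identity $\tau(E^x(\lambda(s;x),\infty))=s$. The paper's route buys uniformity with the proposition immediately preceding the lemma (the two-sided bound \eqref{te}), whose conditional-expectation machinery it simply reuses and which remains meaningful when $\lambda(x)$ is constant near $s$ -- a situation your block decomposition is not designed to handle.
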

\begin{proof} 
Let $\cN= \cN_x\oplus \cM_{{\bf 1}-s(x)}$,
where
  $\mathcal{N}_x$ is   the  commutative weakly closed $*$-subalgebra of $\mathcal{M}$ generated by the spectral projections of $x$. Clearly, the restriction of $\tau$ to $\mathcal{N}$ is semifinite.
 There exists a conditional   expectation  $E_\cN$ from $L_1(\mathcal{M},\tau)$ to $ L_1(\mathcal{N},\tau)$~\cite[Proposition 2.1]{DDPS}.
In particular, for any $z\in L_1(\cM,\tau)+\cM$, we have $ E_\cN (z) \prec\prec z  $ (see e.g. \cite[Proposition 2.1 (g)]{DDPS}).
Moreover, for every $z\in \cM$ and $y\in L_1(\cN,\tau)$, we have
\begin{align}\label{exp}
\tau(yz)=\tau(E _\cN (yz))=\tau(y E_\cN( z) ).
\end{align}

Since $a$ is positive, it follows that $\lambda(a)=\mu(a)$ (see  \cite{HN1}) and, therefore,
$$\int_0^r\lambda(t;E_\cN (a))dt\leqslant\int_0^r\lambda(t;a )dt
$$
for all $r\in(0,\infty)$ and, 
 $$\tau(E_\cN  (a ))\stackrel{\mbox{\cite[Prop. 2.1. (h)]{DDPS}}}{=} \tau(a) .$$
Moreover, since $E_\cN$ is a contraction on $\cM$ and $\left\|a\right\|_\infty \le 1$, it follows that $\lambda(E _\cN (a ))\leqslant1$ \cite[Proposition 2.1 (g)]{DDPS}.
We set  
$y:=(x-\lambda(s;x) )_+ .$
Note that
$$x\leqslant\lambda(s;x)+y .$$
Therefore,
\begin{equation}\label{trace}
\tau(xa )\leqslant\tau(\lambda(s;x)a)+\tau(ya)=s\cdot \lambda(s;x) +\tau(ya).
\end{equation}
Since $\lambda(x)$ is a decreasing function, we have
  \begin{align}\label{yfinite}\lambda(t;y)=\left\{
                 \begin{array}{ll}
                   \lambda(t;x)-\lambda(s;x), & \hbox{if}\ 0<t<s; \\
                   0, & \hbox{if}\ s\leqslant t<\infty .
                 \end{array}
               \right.
               \end{align}
It follows from $\tau(a)=s$  and $\tau(xa)=\int_0^s\lambda(t;x)dt$ that
\begin{align*}
  \int_0^s\lambda(t;y)dt &\stackrel{\eqref{yfinite}}{=} \int_0^s\lambda(t;x)dt-\int_0^s\lambda( s  ;x) dt  \\
  &~= \tau(xa)- \lambda(s;x) \tau( a )  \\
  &\stackrel{\eqref{trace}}{\leqslant} \tau(ya) \\
  &\stackrel{\eqref{exp}}{=}  \tau(yE _\cN (a)) \\
  &\stackrel{\eqref{2.4}}{\leqslant} \int_0^{\infty }\lambda(t;y)\lambda(t;E_\cN (a ))dt \\
  &\stackrel{\eqref{yfinite}}{=} \int_0^{s}\lambda(t;y)\lambda(t;E _\cN(a))dt.
\end{align*}
Thus,
$$\int_0^{s}\lambda(t;y)(1-\lambda(t;E_\cN  (a)))dt\leqslant0.$$
Since $y$ is positive and $\lambda(E_\cN(a))\leqslant1,$ we conclude that $\lambda(t;y)(1-\lambda(t;E_\cN(a)))\geqslant0.$
Hence,  $\lambda(t;y)(1-\lambda(t;E_\cN(a)))=0$ for all $t\in(0,s).$
Recall that $\lambda(x)$ is not a constant in any left neighborhood of $s$. We obtain that  $\lambda (y)>0$ on $(0,s)$.
Recall that $E_\cN(a)\ge 0$ with $\tau(E_\cN(a) )=\tau(a) =s$.
 We obtain that  $\lambda (E_\cN(a))=1$ on $(0,s)$ and $\lambda (E_\cN(a))=0$ on $[s,\infty)$.
This implies that
$E_\cN(a)$ is a projection in $\cN$.
Hence, $E_\cN(a)= E_\cN(a) E_\cN(a) =E_\cN(a\cdot E_\cN(a))$ and $E_\cN(a( {\bf 1}-E_\cN(a)))=0$.
It follows that $$\tau(a^{1/2} ({\bf 1}-E_\cN(a)) a^{1/2} ) =\tau(a ({\bf 1}-E_\cN (a))  )=\tau(E_\cN(a( {\bf 1} -E_\cN(a)))  ) =0  .$$
Therefore, $a^{1/2}({\bf 1} -E_\cN(a))a^{1/2} =0$ and $a^{1/2} =E_\cN (a) a^{1/2} $.
By the assumption that  $a\le {\bf 1}$, we have
$$ E_\cN(a)= E_\cN(a)^2   \ge   E_\cN(a)a E_\cN(a)   =   a . $$
Recall that $\tau(E_\cN(a))=\tau(a)$.
Hence, $\tau(E_\cN(a)-a )=0$.
Due to the faithfulness of the trace $\tau$, we obtain that $a = E_\cN(a )  \in \cP(\cN)$.
Since  $\mathcal{N}$ commutes with  $x$,
 it follows that $a= E^{x}\{B\}\oplus q$ for some Borel set $B\subset [0,1]$ and some projection  $p\in \cM_{{\bf 1} -s(x)}$.
 By \cite[Remark 3.3]{FK} and the assumption that $\tau(ax)=\int_0^s \lambda (t;x)dt$, we have
$$\int_0^s \lambda (t;x)dt =\tau(a x )=\tau(E^x\{B\} x )=\int_0^\infty  \chi_{B}(\lambda(t;x))dt .$$
Moreover, since  $\lambda(\cdot;x)$ is decreasing and is non-constant  in any left neighborhood of $s$, it follows  that $B=(\lambda(s;x),\infty)$. 
Since $\tau(E^x(\lambda(s;x),\infty))=s$, it follows that $a=E^x(\lambda(s;x),\infty)$.
\end{proof}

\begin{rmk}\label{posi}
Let $x\in L_1(\cM,\tau)_+$.
By \eqref{2.4},  for any $a$ in the unit ball of $\cM _+$ with $\tau(a)< s$, we have
$$\tau(xa) \le \int_0^\infty  \lambda (t;x) \lambda (t;a)dt.$$
Since $\lambda (x)$ is a non-negative decreasing function, $0\le \lambda (a)\le 1 $ and $\tau(a)=\int_0^\infty \lambda(t;a)dt <s$, it follows that
\begin{align}\label{<<<}
\tau(xa) \le \int_0^\infty  \lambda (t;x) \lambda (t;a)dt <  \int_0^s  \lambda (t;x)dt  .
\end{align}
Then, by \eqref{Hineq}, we obtain that
$$\int_0^s   \lambda (t;x)dt =\sup \{\tau(xa);a\in \cM, 0\le a\le 1, \tau(a)\le s\}.$$
Lemma \ref{reduction to commutative} together with \eqref{<<<} implies that  if  $a $ is  in the unit ball of $ \mathcal{M}_+ $  such that $\tau(a)\le  s$  and $\tau(xa)=\int_0^s\lambda(t;x)dt,$
and
$\lambda(x)$ is not a constant in any left neighborhood of $s,$ then $a=E^ x(\lambda(s;x),\infty).$
\end{rmk}

\begin{cor}\label{corcons} Let $x\in L_1(\mathcal{M},\tau)_h $. Let $0<s< \tau(\mathbf 1)=\infty  $ and let $e \in  \cP(\mathcal{M})$ be  such that $\tau(e )=s$ and $\tau(xe)=\int_0^s\lambda(t;x)dt.$
Then,
$$E^{x }(\lambda (s;x   ),\infty ) \le  e \le E^{x }[\lambda (s;x ),\infty ).$$
\end{cor}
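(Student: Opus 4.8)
The plan is to reduce the self-adjoint assertion to the positive case already established in the Proposition yielding \eqref{te}. Since $\tau({\bf 1})=\infty$, Remark \ref{alwayspositive} gives $\lambda(x)=\lambda(x_+)=\mu(x_+)\ge 0$; in particular $\int_0^s\lambda(t;x)\,dt=\int_0^s\lambda(t;x_+)\,dt$, and writing $c:=\lambda(s;x)=\mu(s;x_+)\ge 0$ the goal becomes $E^x(c,\infty)\le e\le E^x[c,\infty)$.

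First I would show that $e$ is, up to a trace-zero defect, carried by the positive part of $x$. Since $0\le e\le{\bf 1}$ and $\tau(e)=s$, Proposition \ref{HN1} applied to $x_+$ gives $\tau(x_+e)\le\int_0^s\lambda(t;x_+)\,dt=\int_0^s\lambda(t;x)\,dt$, while $x_-\ge 0$ gives $\tau(x_-e)\ge 0$; hence
$$\int_0^s\lambda(t;x)\,dt=\tau(xe)=\tau(x_+e)-\tau(x_-e)\le\tau(x_+e)\le\int_0^s\lambda(t;x)\,dt,$$
forcing $\tau(x_-e)=0$ and $\tau(x_+e)=\int_0^s\lambda(t;x_+)\,dt$. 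From $\tau(x_-e)=0$ and faithfulness of $\tau$ we get $x_-^{1/2}ex_-^{1/2}=(ex_-^{1/2})^{*}(ex_-^{1/2})=0$, so $ex_-=0$, i.e. $e\le{\bf 1}-s(x_-)=E^x[0,\infty)$.

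Next I would apply the Proposition giving \eqref{te} to the positive operator $x_+\in L_1(\cM,\tau)_+$ and the projection $e$ (legitimate, since $\tau(e)=s$ and $\tau(x_+e)=\int_0^s\lambda(t;x_+)\,dt$): with $\cN:=\cN_{x_+}\oplus\cM_{{\bf 1}-s(x_+)}$ this yields
$$E^{x_+}(c,\infty)\le E_\cN(e)\le E^{x_+}[c,\infty).$$
Because $c\ge 0$ one has $E^{x_+}(c,\infty)=E^x(c,\infty)\in\cP(\cN)$ for every $c$, and $E^{x_+}[c,\infty)=E^x[c,\infty)\in\cP(\cN)$ whenever $c>0$. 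It then remains to pass from bounds on $E_\cN(e)$ to bounds on $e$. For a projection $p\in\cN$: if $p\le E_\cN(e)$ then $p({\bf 1}-E_\cN(e))p=0$, and since $p\in\cN$ this equals $E_\cN(p({\bf 1}-e)p)$, so $\tau(p({\bf 1}-e)p)=0$, whence $p({\bf 1}-e)p=0$ and $p\le e$; dually, if $E_\cN(e)\le q\in\cP(\cN)$ then $q^{\perp}E_\cN(e)q^{\perp}=0=E_\cN(q^{\perp}eq^{\perp})$, so $\tau(q^{\perp}eq^{\perp})=0$, hence $q^{\perp}eq^{\perp}=0$ and $e\le q$. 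Taking $p=E^x(c,\infty)$ gives $E^x(\lambda(s;x),\infty)\le e$. For the upper bound, if $c>0$ taking $q=E^x[c,\infty)$ gives $e\le E^x[\lambda(s;x),\infty)$, while if $c=0$ the inclusion $e\le E^x[0,\infty)=E^x[c,\infty)$ is exactly what the first step delivered.

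The one genuinely delicate point — and the reason the preliminary estimate $\tau(x_-e)=0$ is unavoidable — is the spectral bookkeeping at $c=\lambda(s;x)=0$: there $E^{x_+}[0,\infty)={\bf 1}$, so \eqref{te} for $x_+$ is vacuous on the right, and the correct upper bound $e\le E^x[0,\infty)$ must be extracted separately from $x_-$. Everything else is routine use of the faithful normal trace, of the $\cN$-bimodularity and trace-preservation of $E_\cN$ (\cite[Proposition 2.1]{DDPS}), and of the elementary identities relating the spectral projections of $x$ and of $x_+$.
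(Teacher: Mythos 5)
Your proof is correct, and it finishes by a genuinely different route than the paper's, although both arguments pivot on the sandwich \eqref{te} for $E_\cN(e)$. You differ in two places. First, you make explicit the reduction from self-adjoint $x$ to $x_+$: the chain $\int_0^s\lambda(t;x)dt=\tau(x_+e)-\tau(x_-e)\le\tau(x_+e)\le\int_0^s\lambda(t;x_+)dt$ forces $\tau(x_-e)=0$, hence $ex_-=0$ and $e\le E^x[0,\infty)$. The paper leaves this reduction implicit (it applies the positive-operator Proposition and Lemma \ref{reduction to commutative} to self-adjoint $x$ via Remark \ref{alwayspositive}), so your treatment is more careful on this point, and it is also exactly what rescues the degenerate case $\lambda(s;x)=0$, where the upper bound in \eqref{te} for $x_+$ is vacuous. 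Second, and more substantially, you close with the transfer lemma: for $p\in\cP(\cN)$, $p\le E_\cN(e)$ implies $p\le e$ and $E_\cN(e)\le p$ implies $e\le p$, via bimodularity and trace-preservation of $E_\cN$ together with faithfulness of $\tau$. This single observation replaces the paper's two-stage computation, which first reduces to the case where $\lambda(x)$ is constant on a left neighbourhood of $s$ (via Lemma \ref{reduction to commutative}), then extracts $e\ge E^x(\lambda,\infty)$ from a trace identity for the auxiliary operator $x_1=(x-\lambda)E^x(\lambda,\infty)$ combined with Remark \ref{posi}, and finally pins down $e_1=e-E^x(\lambda,\infty)$ by showing $e_1(\lambda-xE^x(-\infty,\lambda])e_1=0$. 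Your version avoids the constancy dichotomy and the auxiliary operators altogether at the small cost of invoking the module property of the conditional expectation; the paper's version stays entirely within the trace estimates already set up for Lemma \ref{reduction to commutative}. Either way the conclusion is sound.
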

\begin{proof}
By Lemma \ref{reduction to commutative}, it suffices to prove the case when $ \lambda (x)$ is a constant on a left neighbourhood of $s$.
Denote by $\cN:=\cN_x\oplus \cM_{{\bf 1}-s(x)}$, where $\cN_x$ is  the reduced von Neumann algebra generated by all spectral projections of $x $.
Let $E_\cN $ be the conditional expectation from $L_1(\cM,\tau)$ onto $ L_1(\cN,\tau)$.
Let
$$\lambda := \lambda (s;x) \mbox{ and  } x_1: =(x-\lambda )E^x (\lambda,\infty) .$$
Recall that $E^x[ \lambda,\infty) \ge E _\cN (e)\ge E^x(\lambda,\infty)$ (see \eqref{te}).
In particular, $E_\cN(e)E^x(\lambda,\infty) =E^x(\lambda,\infty) $.
Observing that $E^x(\lambda,\infty)$ is the support of $x_1$, we have
\begin{align*}
\tau(x_1  E^x (\lambda,\infty) e E^x (\lambda,\infty)) &~=\tau(x_1 e  )=\tau( x_1  E_\cN(  e) )\\
&~=\tau( x_1 E^x (\lambda,\infty) E_\cN( e)) \\
&~=\tau( x_1 E^x (\lambda,\infty)  ) \\
&~=\tau( (x-\lambda ) E^x (\lambda,\infty)  ) \\
&~=\tau( x E^x (\lambda,\infty)  ) -\tau( \lambda E^x (\lambda,\infty)  ) \\
&~=\int_0^{\tau(E^x (\lambda,\infty))} (\lambda (t;x E^x (\lambda,\infty) ) -\lambda )dt\\
&\stackrel{\eqref{2.2}}{=}\int_0^{\tau(E^x (\lambda,\infty))} (\lambda (t;x) -\lambda )dt\\
&~=\int_0^{\tau(E^{x_1} (0,\infty))} \lambda (t;x_1)dt.
\end{align*}
Note that $0\le E^x(\lambda,\infty   )eE^x(\lambda,\infty )\le \textbf{1}$ and $\tau(E^x(\lambda,\infty   )eE^x(\lambda,\infty )) \le \tau(E^x(\lambda,\infty   ) ) $.
Since $x_1\ge 0$, it follows from  Remark \ref{posi} that
$$E^x (\lambda,\infty) e E^x (\lambda,\infty) =E^{x_1} (0,\infty)=E^{x } (\lambda ,\infty)  .$$
That is,
$e \ge E^x (\lambda,\infty) $.
Let $e_1: = e -E^{x}(\lambda,\infty)\in \cP(\cM)$.
We have $$\tau(x e) =\tau(x ( E^{x}(\lambda,\infty)  + E^{x}(-\infty,\lambda]) e  )=\tau(x E^{x}(\lambda,\infty)  +x E^{x}(-\infty,\lambda]e_1).$$
Hence, by the  assumption that $\lambda=\lambda(s;x)$, we obtain that
\begin{align*}
\tau(x E^{x}(-\infty,\lambda]e_1)& = \int_{0}^s \lambda (t;x)dt -\int_0^{\tau(E^{x}(\lambda,\infty))} \lambda (t;x)dt\\
&=\int_{\tau(E^{x}(\lambda,\infty))}^s \lambda (t;x)dt  =\tau(\lambda e_1) .
\end{align*}
We have
$$ \tau(e_1 (\lambda  - x E^{x}(-\infty,\lambda ])e_1) =0.$$
Therefore, $ e_1 (\lambda - x E^{x}(-\infty,\lambda])e_1=0  $.
Since $\lambda - x E^{x}(-\infty,\lambda]\ge 0$, it follows that
$(\lambda - x E^{x}(-\infty,\lambda])^{1/2}e_1=0$.
Hence,  $$ \left(\int_{t <\lambda  } (\lambda -t) dE_t^x + \int_{t > \lambda  } \lambda  dE_t^x \right) e_1 =(\lambda - x E^{x}(-\infty,\lambda])e_1  = 0$$
and
\begin{align*}
&~E^x  \Big( (-\infty,\lambda)\cup (\lambda ,\infty)  \Big) \cdot e_1 \\
&= \left(\int_{t <\lambda  } \frac{1}{\lambda -t} dE_t^x + \int_{t > \lambda  } \frac1\lambda    dE_t^x \right) \left(\int_{t <\lambda  } (\lambda -t) dE_t^x + \int_{t > \lambda  } \lambda  dE_t^x \right) e_1 = 0.  
\end{align*}
This  implies that  $ e_1\le E^x\{\lambda (s;x)\} $, which completes the proof.
\end{proof}

The following proposition is similar to a well-known property of rearrangements of functions, see \cite[property $9^0$, p. 65]{KPS} and \cite[Theorem 3.5]{Hiai} and \cite{DHS}.

\begin{prop}\label{noncomm KPS lemma} Let $x,\ x_1,\ x_2\in L_1(\mathcal{M},\tau)_h$ be such that $x=(x_1+x_2)/2$ and $\lambda((x_1)_+)=\lambda((x_2)_+)=\lambda(x_+)$ and $\lambda((x_1)_-)=\lambda((x_2)_-)=\lambda(x_-)$.
Then, $x=x_1=x_2$.
\end{prop}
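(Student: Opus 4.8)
The plan is to recover all the spectral projections of $x_1$ (and symmetrically of $x_2$) from those of $x$ by testing against spectral projections of $x$ and invoking the sandwiching estimate of Corollary \ref{corcons}. Throughout we use the standing assumption $\tau(\mathbf 1)=\infty$, so that by Remark \ref{alwayspositive} we have $\lambda(z)=\lambda(z_+)=\mu(z_+)$ for every $z\in L_1(\cM,\tau)_h$; in particular the hypotheses read $\mu((x_1)_+)=\mu((x_2)_+)=\mu(x_+)$ and $\mu((x_1)_-)=\mu((x_2)_-)=\mu(x_-)$. It suffices to prove $(x_1)_+=x_+$: applying this conclusion to the decomposition $-x=\tfrac12\big((-x_1)+(-x_2)\big)$, whose data satisfy $\mu\big((-x_i)_+\big)=\mu\big((x_i)_-\big)=\mu(x_-)=\mu\big((-x)_+\big)$, gives $(-x_1)_+=(-x)_+$, i.e.\ $(x_1)_-=x_-$, hence $x_1=(x_1)_+-(x_1)_-=x$, and then $x_2=2x-x_1=x$.

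Fix $r>0$ and put $e:=E^x(r,\infty)=E^{x_+}(r,\infty)$, so that $\tau(e)=d_{x_+}(r)\le\|x_+\|_1/r<\infty$; if $\tau(e)=0$ the identity $E^{x_1}(r,\infty)=0=E^x(r,\infty)$ will follow from the trace computation below, so assume $\tau(e)\in(0,\infty)$. Since $e\le s(x_+)$ we have $xe=x_+e$, and \eqref{2.2} applied to $x_+\in S_0(\cM,\tau)_+$ with the projection $e$ (which trivially satisfies $E^{x_+}(r,\infty)\le e\le E^{x_+}[r,\infty)$), together with the fact that $x_+e\ge0$ is supported under $e$, gives $\tau(xe)=\tau(x_+e)=\int_0^{\tau(e)}\mu(t;x_+)\,dt$. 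On the other hand $x=\tfrac12(x_1+x_2)$ yields $\tau(xe)=\tfrac12\big(\tau(x_1e)+\tau(x_2e)\big)$, while Proposition \ref{HN1} (with the element $e$ in the unit ball of $\cM_+$) gives $\tau(x_ie)\le\int_0^{\tau(e)}\mu(t;(x_i)_+)\,dt=\int_0^{\tau(e)}\mu(t;x_+)\,dt$ for $i=1,2$. Comparing, every inequality is an equality, so in particular
\[
\tau(x_1e)=\int_0^{\tau(e)}\mu(t;(x_1)_+)\,dt=\int_0^{\tau(e)}\lambda(t;x_1)\,dt,
\]
which is precisely the hypothesis of Corollary \ref{corcons} with $x_1$ in place of $x$ and $s=\tau(e)$. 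Hence
\[
E^{x_1}(c_r,\infty)\le E^x(r,\infty)\le E^{x_1}[c_r,\infty),\qquad c_r:=\lambda(\tau(e);x_1)=\mu\big(d_{x_+}(r);x_+\big)\le r,
\]
where the inequality $c_r\le r$ follows from \eqref{dis}.

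Now $c_r\le r$ and the left-hand inclusion give $E^{x_1}(r,\infty)\le E^{x_1}(c_r,\infty)\le E^x(r,\infty)$. On the other hand, using \eqref{disx} twice together with $\mu((x_1)_+)=\mu(x_+)$,
\[
\tau\big(E^{x_1}(r,\infty)\big)=d_{(x_1)_+}(r)=d_{\mu((x_1)_+)}(r)=d_{\mu(x_+)}(r)=d_{x_+}(r)=\tau\big(E^x(r,\infty)\big)<\infty .
\]
By faithfulness of $\tau$ it follows that $E^{x_1}(r,\infty)=E^x(r,\infty)$ for every $r>0$. Since the half-lines $(r,\infty)$, $r>0$, generate the Borel $\sigma$-algebra of $(0,\infty)$, the spectral measures of $x_1$ and of $x$ coincide on $(0,\infty)$, whence $(x_1)_+=\int_{(0,\infty)}t\,dE^{x_1}_t=\int_{(0,\infty)}t\,dE^{x}_t=x_+$. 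By the reduction in the first paragraph this gives $x_1=x_2=x$.

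The only genuinely delicate point — and the reason the infinite case is not automatic — is that the level $c_r=\lambda(\tau(e);x_1)$ produced by Corollary \ref{corcons} need not equal $r$ when the spectrum of $x_+$ has gaps. This is handled not by forcing $c_r=r$, but by observing that one needs only the monotonicity $c_r\le r$ together with the trace identity $\tau(E^{x_1}(r,\infty))=\tau(E^x(r,\infty))$ furnished by $\mu((x_1)_+)=\mu(x_+)$; faithfulness of $\tau$ then upgrades the inclusion $E^{x_1}(r,\infty)\le E^x(r,\infty)$ to an equality. The remaining verifications ($xe=x_+e$, the identifications $\lambda(z)=\mu(z_+)$, and the measure-theoretic uniqueness step) are routine.
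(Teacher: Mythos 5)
Your proof is correct: every step checks out (the Chebyshev bound giving $\tau(e)<\infty$, the squeeze via Proposition \ref{HN1} and the identity $\mu((x_1)_+)=\mu((x_2)_+)=\mu(x_+)$ forcing $\tau(x_ie)=\int_0^{\tau(e)}\lambda(t;x_i)\,dt$, the applicability of Corollary \ref{corcons}, the inequality $c_r\le r$ from \eqref{dis}, and the trace comparison via \eqref{disx}). The overall skeleton is the same as the paper's: both arguments test against a spectral projection $e$ of $x$, use Proposition \ref{HN1} together with $x=\tfrac12(x_1+x_2)$ to force the extremal trace identity for $x_1$ and $x_2$ separately, and then identify $e$ among the spectral projections of $x_i$. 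Where you diverge is in how that identification is completed. The paper restricts attention to the special levels $s$ at which $\lambda(x)$ is non-constant in every left neighborhood (chosen via the auxiliary parameter $\theta$), so that Lemma \ref{reduction to commutative} delivers the \emph{exact} equality $e=E^{x_1}(\lambda(s;x_1),\infty)$, and then recovers the remaining spectral projections by a limiting argument. You instead work with an arbitrary level $r>0$, accept the weaker sandwich of Corollary \ref{corcons}, extract only the one-sided inclusion $E^{x_1}(r,\infty)\le E^{x}(r,\infty)$ from $c_r\le r$, and upgrade it to an equality by comparing the (finite) traces --- which coincide precisely because $\mu((x_1)_+)=\mu(x_+)$ --- and invoking faithfulness. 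This buys you a cleaner endgame: you avoid the case analysis on constancy intervals of $\lambda(x)$ and the limit over $\theta$ entirely, at the cost of needing the distribution-function identity $d_{(x_1)_+}=d_{x_+}$ as an extra input (which is, of course, exactly the hypothesis). Both routes ultimately rest on the same technical core, since Corollary \ref{corcons} is itself deduced from Lemma \ref{reduction to commutative}.
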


\begin{proof}
Fix $\theta \in (0  , \lambda (0;x_+) )  $.
Define $s$ by setting
$$s := \min \{ 0\le v\le \infty  :\lambda (v;x) \le  \theta \}. $$
If $s>0$,  then $$ \lambda(s;x) \le \theta <   \lambda(s-\varepsilon;x), ~\forall \varepsilon\in (0,s). $$
This means that $\lambda(x)$ is not constant in any left neighborhood of $s$ whenever $s>0$.

Fix a projection $$e=E^{x_1+x_2}(\lambda(s; (x_1+x_2)_+),\infty)=E^{2x_+}(\lambda(s; 2x ),\infty)=E^{x}(\lambda(s; x_+ ),\infty)\in\mathcal{M}.$$ Clearly, $\tau(e)=s$.  
By \eqref{Hineq}, we have
\begin{align*}
\tau(e(x_1+x_2))= \tau(e(x_1+x_2)_+) &~= \int_0^{s}\lambda(t;(x_1+x_2)_+)dt\\
&~= \int_0^{s}2 \lambda(t;x_+)dt\\
 &~= \int_0^{s}\lambda(t;(x_1)_+)dt+\int_0^{s}\lambda(t;(x_2)_+)dt\\
 &\stackrel{\eqref{Hineq}}{\geq}  \tau(e(x_1)_+) + \tau(e(x_2)_+)\\
 &~\ge  \tau(e x_1) + \tau(ex_2)\\
 &~= \tau(e(x_1+x_2)).
\end{align*}
Hence,  $\int_0^{s}\lambda(t;(x_i)_+)dt =\tau(ex_i),\; i=1,2$.
 From Lemma \ref{reduction to commutative}, we obtain that $e=E^{x}(\lambda(s; x_+ ),\infty) =E^{x_1}(\lambda(s;(x_1)_+),\infty)=E^{x_2}(\lambda(s;(x_2)_+),\infty).$
Hence,
we have
\begin{align}\label{x012}
E^x(\theta,\infty) = E^x(\lambda(s;x_+),\infty)& = E^{x_1}(\lambda (s;(x_1)_+),\infty)= E^{x_2}(\lambda (s;(x_2)_+ ),\infty)\nonumber\\
&= E^{x_1}(\theta,\infty)=E^ {x_2}(\theta,\infty).
\end{align}
Note that (for convenience, we denote $E^x [\infty,\infty )=E^{x_1}[\infty,\infty )=E^{x_2} [\infty,\infty )=0$)
\begin{align*}&\quad E^x [\lambda(0;x_+),\infty ) = {\bf 1}- E^x (-\infty, \lambda(0;x_+) ) ={\bf 1} -\lim_{\theta \uparrow  \lambda(0;x_+)^-}E^x (-\infty, \theta  ] \\
&\stackrel{\eqref{x012}}{=} {\bf 1}-\lim_{\theta \uparrow  \lambda(0;x)^-}E^{x_1} (-\infty, \theta  ]= {\bf 1}- E^{x_1} (-\infty, \lambda(0;(x_1)_+) ) =E^{x_1} [\lambda(0;(x_1)_+ ),\infty ) \\
&\stackrel{\eqref{x012}}{=}  {\bf 1}-\lim_{\theta \uparrow  \lambda(0;x_+)^-}E^{x_2} (-\infty, \theta  ]= {\bf 1} - E^{x_2} (-\infty, \lambda(0;(x_2)_+ ) ) =E^{x_2} [\lambda(0;(x_2)_+),\infty ),
\end{align*}
which together with \eqref{x012} implies that
$$ x E^x (0,\infty ) = x_1 E^{x_1} (0,\infty )= x_2 E^{x_2}( 0,\infty )  . $$
It follows that $$x_+=(x_1)_+=(x_2)_+.$$
 The same argument shows that
  $$x_-=(x_1)_-=(x_2)_-.$$
\end{proof}

\end{document}